\allowdisplaybreaks \numberwithin{equation}{section}
\numberwithin{equation}{section}
\newtheorem{theorem}{Theorem}[section]
\newtheorem{corollary}[theorem]{Corollary}
\newtheorem{lemma}[theorem]{Lemma}
\theoremstyle{definition}
\theoremstyle{remark}
\newtheorem{remark}[theorem]{Remark}
\begin{document}

\title
{Existence and regularity of co-rotating and travelling global solutions for the generalized SQG equation}

\author{Daomin Cao, Guolin Qin,  Weicheng Zhan, Changjun Zou}

\address{Institute of Applied Mathematics, Chinese Academy of Sciences, Beijing 100190, and University of Chinese Academy of Sciences, Beijing 100049,  P.R. China}
\email{dmcao@amt.ac.cn}
\address{Institute of Applied Mathematics, Chinese Academy of Sciences, Beijing 100190, and University of Chinese Academy of Sciences, Beijing 100049,  P.R. China}
\email{qinguolin18@mails.ucas.edu.cn}
\address{Institute of Applied Mathematics, Chinese Academy of Sciences, Beijing 100190, and University of Chinese Academy of Sciences, Beijing 100049,  P.R. China}
\email{zhanweicheng16@mails.ucas.ac.cn}

\address{Institute of Applied Mathematics, Chinese Academy of Sciences, Beijing 100190, and University of Chinese Academy of Sciences, Beijing 100049,  P.R. China}
\email{zouchangjun17@mails.ucas.ac.cn}


\begin{abstract}
	By studying the linearization of contour dynamics equation and using implicit function theorem, we prove the existence of co-rotating and travelling global solutions for the gSQG equation, which extends the result of Hmidi and Mateu \cite{HM} to $\alpha\in[1,2)$. Moreover, we prove the $C^\infty$ regularity of vortices boundary, and show the convexity of each vortices component.
\end{abstract}

\maketitle

\section{Introduction and main results}
We will consider the generalized surface quasi-geostrophic (gSQG) equation
\begin{align}\label{1-1}
	\begin{cases}
		\partial_t\vartheta+\mathbf{v}\cdot \nabla \vartheta =0&\text{in}\ \mathbb{R}^2\times (0,T)\\
		\ \mathbf{v}=\nabla^\perp(-\Delta)^{-1+\frac{\alpha}{2}}\vartheta     &\text{in}\ \mathbb{R}^2\times (0,T),\\
		\vartheta\big|_{t=0}=\vartheta_0 &\text{in}\ \mathbb{R}^2\\
	\end{cases}
\end{align}
where $ 0\le\alpha<2$, $\vartheta(\boldsymbol x,t):\mathbb{R}^2\times (0,T)\to \mathbb{R}$ is the active scaler being transported by the velocity field $\mathbf{v}(\boldsymbol x,t):\mathbb{R}^2\times (0,T)\to \mathbb{R}^2$ generated by $\vartheta$, and $(x_1,x_2)^\perp=(x_2,-x_1)$. The operator $(-\Delta)^{-1+\frac{\alpha}{2}}$ is given by the expression
\begin{equation*}
	(-\Delta)^{-1+\frac{\alpha}{2}}\vartheta(\boldsymbol x)=\int_{\mathbb{R}^2}K_\alpha(\boldsymbol x-\boldsymbol y)\vartheta(\boldsymbol y)d\boldsymbol y,
\end{equation*}
where $K_\alpha$ is the fundamental solution of $(-\Delta)^{-1+\frac{\alpha}{2}}$ in $\mathbb{R}^2$ given by
\begin{equation*}
	K_\alpha( \ \cdot \ )=\left\{
	\begin{array}{lll}
		\frac{1}{2\pi}\ln \frac{1}{|\ \cdot \ |}, \ \ \ \ \ \ \ \ \ \ \ \ \ \ \ \ \ \ \ \ \ & \text{if} \ \ \alpha=0;\\
		\frac{C_\alpha}{2\pi}\frac{1}{|\ \cdot \ |^\alpha}, \ \ \ C_\alpha=\frac{\Gamma(\alpha/2)}{2^{1-\alpha}\Gamma(\frac{2-\alpha}{2})}, & \text{if} \ \ \ 0<\alpha<1.
	\end{array}
	\right.
\end{equation*}
with $\Gamma( \ \cdot \ )$ the Euler gamma function.

When $\alpha=0$, \eqref{1-1} is the vorticity formulation of 2D incompressible Euler equation. When $\alpha=1$, \eqref{1-1} is the surface quasi-geostrophic (SQG) equation, which is applied to study the atmosphere circulation and ocean dynamics. The gSQG model \eqref{1-1} $0<\alpha<2$ was proposed by C\'{o}rdoba et al. in \cite{Cor}, and was intensively investigated In the past decade as a generalization of the Euler equation and the SQG equation.

For the case $\alpha=0$, Yudovich \cite{Yud} proved the global well-posedness of \eqref{1-1} with the intial data in $L^1\cap L^\infty$ in 1960s. However, it is delicate to extend this theory to the general case $0<\alpha<2$, since the velocity is singular and scales below the Lipschitz class. Constantin et al. \cite{Con} established local well-posedness of the gSQG equation for classical solutions, and this property is known for sufficiently regular intial data as in \cite{Chae,Gan,Kis2}. local existence of solutions was also studied in different function spaces, see \cite{Chae0,Li0,Wu1,Wu2}.  An interesting issue for the gSQG model is the finte time below up versus global existence of solutions. In \cite{Kis}, Kiselev an Nazarov constructed solutions of the gSQG equations with arbitrary Sobolev growth. For the later situation, Resnick \cite{Res} proved global existence of weak solutions to the SQG equations with any initial data in $L^2$, which was improved by Marchand \cite{Mar} to any initial data belonging to $L^p$ with $p>4/3$. On the other hand, various of global solutions to \eqref{1-1} were constructed, which is the topic we are mainly concerned with here.

Due to the structure of the nonlinear term, all radially symmetric functions $\vartheta$ are stationary solutions to the gSQG equation. This fact inspires mathematicians to construct other global solutions which do not change form as time evolves. There are two kinds of solutions of this type: rotating solutions with uniform angular velocity, and travelling solution pairs with uniform speed. 

To talk about rotating solutions, we recall that a domain is said to be $m$-fold symmetric if it is invariant under the rotation around its center with an angular $2\pi/m$. The first explicit non-trival rotating solution of $2$-fold symmetry for $\alpha=1$ is constructed by Kirchhoff \cite{Kir}, which is an elliptic patch of semi-axes $a$ and $b$ subjected to a perpetual rotation with uniform angular velocity $ab/(a+b)^2$. Deem and Zabusky \cite{Deem} then carried out a series of numerical simulations, which provided evidence for existence of various $m$-fold symmetric solutions with $m\ge2$. In 1980s, Burbea \cite{Burb} put forward a new way to construct $m$-fold symmetric patch solutions by bifurcation theory. This approach is based on the linearization of the contour dynamic equation at trival solutions, and was greatly developed nowadays. We will discuss this approach in detail later. There are also other ways for the same purpose. Turkington \cite{T2} proved the existence of co-rotating $m$-fold symmetric vortex paches for the case $\alpha=0$ by a dual variational principle, which was extended by Godard-Cadillac et al. \cite{Go} to the gSQG model with $\alpha\in(0,1]$. Except for the foresaid results, Ao et al. \cite{Ao} gave the construction of smooth solutions for all $\alpha\in(0,2)$ by the Lyapunov-Schmidt reduction.

The study of travelling solution pairs can be traced back to Pocklington \cite{Poc}. In \cite{Lamb}, Lamb gave an explicit example of travelling vortex pairs in $\alpha=0$ which is now generally referred to as the Lamb dipole or Chaplygin-Lamb dipole. In the 3D axisymmetric Euler flow, the corresponding phenomenon of Lamb dipole is known as the Hill vortex. For vortex pairs whose two components are close, which are near the Lamb dipole, the bifurcation method is the mainstream to derive existence, see \cite{Amc}. While if the vorticity is restricted in two far separated and axisymmetric small domains, vortex pairs share a same construction with the co-rotating solutions, and we refer to \cite{Ao,Cao4,Go0,Gra} for more discussion. 

The main tool in this paper is the contour dynamics equation for $\alpha$-patch, for which we will give a brief introduction. An $\alpha$-patch is a solution of \eqref{1-1} whose initial data is given by $\vartheta_0=\boldsymbol \chi_{D}$, with $D\subset \mathbb{R}^2$ a bounded domain and $\boldsymbol \chi_{D}$ its characteristic function. Uniformly rotating rotating $\alpha$-patches are also known as $V$-states. Due to the transport formula $\partial_t\vartheta+\mathbf{v}\cdot \nabla \vartheta =0$, the solution will preserve its patch structure and can be written as $\vartheta_t=\boldsymbol \chi_{D_t}$. When $0<\alpha<1$, using Biot-Savart law and Green-Stokes formula, the velocity can be recovered by
\begin{equation*}
	\boldsymbol v(\boldsymbol x,t)=\frac{C_\alpha}{2\pi}\int_{\partial D_t}\frac{1}{|x-\xi|^\alpha}d\xi.
\end{equation*} 
Thus if the patch boundary $\partial D_t$ is parameterized as $\boldsymbol z(t,\sigma)$ with $\sigma\in [0,2\pi)$, then $\boldsymbol z(t,\sigma)$ will satisfy
\begin{equation*}
	\partial_t \boldsymbol z(t,\sigma)=\frac{C_\alpha}{2\pi}\int_0^{2\pi}\frac{\partial_\tau\boldsymbol z(t,\tau)}{|\boldsymbol z(t,\sigma)-\boldsymbol z(t,\tau)|^\alpha}d\tau,
\end{equation*}
which is known as the contour dynamics equation. When $1\le\alpha<2$, the above integral is divergent. To eliminate the singularity, we can substract a tangential vector in above integral and define
\begin{equation*}
	\partial_t \boldsymbol z(t,\sigma)=\frac{C_\alpha}{2\pi}\int_0^{2\pi}\frac{\partial_\tau\boldsymbol z(t,\tau)-\partial_\sigma\boldsymbol z(t,\sigma)}{|\boldsymbol z(t,\sigma)-\boldsymbol z(t,\tau)|^\alpha}d\tau.
\end{equation*}
The contour dynamics equation is locally well-posed if the boundary of the initial $\alpha$-patch is composed with finite number of disjoint smooth Jordan curves.

Following the spirit of Burbea in \cite{Burb}, Hassainia and Hmidi \cite{Has} used the contour dynamics equation to construct simply-connected $m$-fold symmetric V-states for $\alpha\in[0,1)$, which are bifurcated from the unit disk. Their construction relys on the structure of linearization of contour dynamics equation, and the uniform angular velocity is selected appropriately such that the transversality assumption of Crandall-Rabinowitz's theorem is satisfied. By changing the function spaces from H\"older space to $H^k$ space on torus, Castro et al. \cite{Cas1} proved the existence of solutions of same type for the remaining open cases $\alpha\in[1,2)$.  In \cite{de1,de2,de3}, the existence of doublely-connected V-states are established by studying the bifurcation from anulus with special angular velocity. However, there are two coupled nolinear equations in this situation, and the estimate for the spectral of linearized operator is much more difficult. In \cite{Cas4,Hmi2}, V-states bifurcated from Kirchhoff elliptic vortices were considered for $\alpha=0$. Recently, Castro et al. \cite{Cas3,Cas2} also successfully applied this method to construct uniformly rotating smooth solutions. 

Except for Crandall-Rabinowitz's theorem, another method to construct global solutions by contour dynamics equation is using implicit function theorem. Different from solutions obtained by bifurcation which are somehow scattered, the vorticity constructed by implicit function theorem is located in several small domains, and can be regarded as an approximation of point vortices. By this method, Hmidi and Mateu \cite{HM} gave a direct proof of the existence of co-rotating and travelling concentrating patch pairs for $\alpha\in [0,1)$. Carc\'ia \cite{Gar2} considered the $m$-fold symmetric case with the same approach. In this paper, we will close the question of the existence of co-rotating and travelling global patch solutions for $\alpha\in[1,2)$ by introducing function spaces as in \cite{Cas1}.

To state our main results, we first fix some notations which will be frequently used in this paper. For $\varepsilon\in [0,1/2)$, we denote $D^\varepsilon_0$ as a simply connected domain containing the origin, which is close to the ball with radius $\varepsilon$ centered at the origin. $\partial D^\varepsilon_0$ can be parameterized as
$$\boldsymbol{z}(x)=\left(\varepsilon R(x)\cos(x), \varepsilon R(x)\sin(x)\right),$$
where $$R(x)=1+\varepsilon^{1+\alpha}f(x)$$
with $x\in [0,2\pi)$, and $f$ some $C^1$ function. We will use $\boldsymbol\chi_D$ to denote the characteristic function of domain $D\in \mathbb{R}^2$, and $Q_\theta$ to denote the counterclockwise rotation operator of angle $\theta$ with respect to the origin. For simplicity reason, we let
$$\int\!\!\!\!\!\!\!\!\!\; {}-{} g(\tau)d\tau:=\frac{1}{2\pi}\int_0^{2\pi}g(\tau)d\tau$$ 
be the mean value of integral on the unit circle. The function spaces which we will use in this paper are 
\begin{equation*}
	X^k=\left\{ g\in H^k, \ g(x)= \sum\limits_{j=2}^{\infty}a_j\cos(jx)\right\},
\end{equation*}
\begin{equation*}	
		X^{k}_{\log}=\left\{ g\in H^k, \ g(x)= \sum\limits_{j=2}^{\infty}a_j\cos(jx), \ \left\|\int_0^{2\pi}\frac{\partial^kg(x-y)-\partial^kg(x)}{|\sin(\frac{y}{2})|}dy\right\|_{L^2}<\infty \right\},
\end{equation*}
\begin{equation*}
	Y^k=\left\{ g\in H^k, \ g(x)= \sum\limits_{j=1}^{\infty}a_j\sin(jx)\right\},
\end{equation*}
and
\begin{equation*}
	Y_0^k=Y^k/\text{span}\{\sin(x)\}=\left\{ g\in H^k, \ g(x)= \sum\limits_{j=2}^{\infty}a_j\sin(jx)\right\}.
\end{equation*}
It is easy to see that for every $\mu>0$, the embedding $X^{k+\mu}\subset X^{k}_{\log}\subset X^{k}$ holds.

The first kind of global solutions we will study are co-rotating solutions. Suppose the initial data $\vartheta_{0,\varepsilon}$ is an $m$-fold symmetric patch, that is 
\begin{equation}\label{1-2}
	\vartheta_{0,\varepsilon}(\boldsymbol x)=\frac{1}{\varepsilon^2}\sum\limits_{i=0}^{m-1}\boldsymbol\chi_{D^\varepsilon_i},
\end{equation}
where $D_i^\varepsilon\subset \mathbb{R}^2$ are domains satisfy
\begin{equation*}
	D_i^\varepsilon-d\boldsymbol e_1=Q_{\frac{2\pi i}{m}}\left(D_0^\varepsilon-d\boldsymbol e_1\right)
\end{equation*}
with some $d>1$ fixed, and $\boldsymbol e_1$ the unit vector in $x_1$ direction. We intend to prove the existence of a series of co-rotating $m$-fold symmetric solutions to \eqref{1-1} about $(d,0)$, which take the form 
\begin{equation*}
	\vartheta_\varepsilon(\boldsymbol x-d\boldsymbol e_1,t)=\vartheta_{0,\varepsilon}\left(Q_{\Omega t}(\boldsymbol x-d\boldsymbol e_1)\right)
\end{equation*}
with $\Omega$ some uniform angular velocity. If we combine this equality with \eqref{1-1}, we derive
\begin{equation*}
	\left(\mathbf{v}_0(\boldsymbol x)+\Omega(\boldsymbol x-d\boldsymbol e_1)^\perp\right)\cdot \nabla \vartheta_{0,\varepsilon}(\boldsymbol x)=0.
\end{equation*}
Then we can use the patch structure and obtain
\begin{equation*}
	\left(\mathbf{v}_0(\boldsymbol x)+\Omega(\boldsymbol x-d\boldsymbol e_1)^\perp\right)\cdot \mathbf n(\boldsymbol x)=0, \ \ \ \forall \, \boldsymbol x \in \cup_{i=0}^{m-1}\partial D^\varepsilon_i,
\end{equation*}
where $\mathbf n(\boldsymbol x)$ is the normal vector to the boundary. According to Biot-Savart law and Green-Stokes formula, it is sufficient to find $R(x)=1+\varepsilon^{1+\alpha}f(x)$, such that
\begin{equation}\label{1-3}
	\begin{split}
		& \Omega\left(\varepsilon R'(x)-\frac{dR'(x)\cos(x)}{R(x)}+d\sin(x)\right)\\
		&+\frac{C_\alpha}{\varepsilon^{1+\alpha}R(x)}\int\!\!\!\!\!\!\!\!\!\; {}-{} \frac{\left((R(x)R(y)+R'(x)R'(y))\sin(x-y)+(R(x)R'(y)-R'(x)R(y))\cos(x-y)\right)dy}{\left| \left(R(x)-R(y)\right)^2+4R(x)R(y)\sin^2\left(\frac{x-y}{2}\right)\right|^{\frac{\alpha}{2}}}+\\
		&\sum_{i=1}^{m-1} \frac{C_\alpha}{\varepsilon R(x)} \int\!\!\!\!\!\!\!\!\!\; {}-{} \frac{\left((R(x)R(y)+R'(x)R'(y))\sin(x-y-\frac{2\pi i}{m})+(R(x)R'(y)-R'(x)R(y))\cos(x-y-\frac{2\pi i}{m})\right)dy}{\left| \left(\boldsymbol{z}(x)-(d,0)\right)-Q_{\frac{2\pi i}{m}}\left(\boldsymbol{z}(y)-(d,0)\right)\right|^\alpha}\\\
		&=0.
	\end{split}
\end{equation}
In Section 2, we will show that the structure of \eqref{1-3} allows us to use implicit function theorem at $(\varepsilon,\Omega, f)=(0,\Omega_\alpha^*, 0)$ with
\begin{equation}\label{1-4}
	\Omega_\alpha^*:=\sum_{i=1}^{m-1} \frac{\alpha C_\alpha(-1+\cos(\frac{2\pi i}{m}))}{2\left((-1+\cos(\frac{2\pi i}{m}))^2 +\sin^2(\frac{2\pi i}{m})\right)^{1+\frac{\alpha}{2}}d^{2+\alpha}},
\end{equation}
and thus prove the existence of a family of co-rotating global solutions to \eqref{1-1} generated from $(0,\Omega_\alpha^*, 0)$. Actually, one can verify that $(0,\Omega_\alpha^*, 0)$ corresponds to co-rotating $m$-fold point vortices solutions to \eqref{1-1}, where the intensity of every single point vortex is $\pi$. Our first result can be stated as follow:
\begin{theorem}\label{thm1}
Suppose $\alpha\in [1,2)$, and $m\ge 2$. There exists $\varepsilon_0>0$ such that for any $\varepsilon\in [0,\varepsilon_0)$, \eqref{1-1} has a global co-rotating solution $\vartheta_\varepsilon(\boldsymbol x-d\boldsymbol e_1,t)=\vartheta_{0,\varepsilon}(Q_{\Omega_\alpha t}(\boldsymbol x-d\boldsymbol e_1))$ centered at $(d,0)$, where $\vartheta_{0,\varepsilon}$ is defined in \eqref{1-2}, and $\Omega_\alpha$ satisfies
$$\Omega_\alpha=\Omega_\alpha^*+O(\varepsilon^\alpha)$$
with $\Omega_\alpha^*$ given in \eqref{1-4}.
\end{theorem}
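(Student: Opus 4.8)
The plan is to recast \eqref{1-3} as a single functional equation $F(\varepsilon,\Omega,f)=0$ and to solve it by the implicit function theorem at the point-vortex configuration $(0,\Omega_\alpha^*,0)$. I would define $F$ as a convenient normalization of the left-hand side of \eqref{1-3}, regarded as a map from a neighbourhood of $(0,\Omega_\alpha^*,0)$ in $\mathbb{R}\times\mathbb{R}\times X^{k}_{\log}$ into the odd space $Y^{k}$ (for $\alpha\in(1,2)$ the domain is taken to be the slightly smoother $X^{k+\alpha-1}\subset X^{k}_{\log}$, absorbing the derivative loss via the stated embedding). The oddness of the whole expression in $x$ is forced by the $m$-fold symmetry, which is why the cosine space is the natural domain and the sine space the natural range. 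The first, and most delicate, task is to verify that $F$ is well defined and continuously Fréchet differentiable in $(\Omega,f)$, uniformly for small $\varepsilon$. The scaling $R=1+\varepsilon^{1+\alpha}f$ is chosen precisely so that the prefactor $\varepsilon^{-(1+\alpha)}$ multiplying the self-interaction integral is exactly compensated by $R-1=\varepsilon^{1+\alpha}f$, so that as $\varepsilon\to 0$ the self-interaction contributes a nontrivial linear operator in $f$ at order $\varepsilon^{0}$, on the same footing as the far-field interaction and the rotation term.

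Second, I would check $F(0,\Omega_\alpha^*,0)=0$. At $\varepsilon=0$ and $f=0$ the self-interaction integrand is odd in $x-y$ and hence integrates to zero; the rotation term collapses to $\Omega d\sin x$; and the far-field interaction, after expanding the kernel $|\,\cdot\,|^{-\alpha}$ to first order in $\varepsilon$ about the separation vectors $\boldsymbol a_i=d(\cos\tfrac{2\pi i}{m}-1,\sin\tfrac{2\pi i}{m})$, produces a single mode $G\sin x$ whose coefficient satisfies $\Omega_\alpha^* d+G=0$ by the definition \eqref{1-4}. Thus $F(0,\Omega_\alpha^*,0)=(\Omega_\alpha^* d+G)\sin x=0$, which is exactly the statement that the $m$ vortices of intensity $\pi$ rotate rigidly with angular velocity $\Omega_\alpha^*$.

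Third comes the linearization. Differentiating at $(0,\Omega_\alpha^*,0)$ gives $\partial_{(\Omega,f)}F(0,\Omega_\alpha^*,0)[\omega,h]=\omega\,d\sin x+\mathcal{L}_\alpha h$, where $\mathcal{L}_\alpha$ is the linearized self-interaction; note that at $\varepsilon=0$ the $\Omega$-term and the far-field term contribute no $f$-dependence, since $f$ enters them only at order $\varepsilon^{1+\alpha}$. Since $\mathcal{L}_\alpha$ is a Fourier multiplier, $\mathcal{L}_\alpha[\cos(jx)]=\mu_j\sin(jx)$, and the core analytic step is to evaluate $\mu_j$ explicitly as a ratio of Gamma functions, to show $\mu_j\neq 0$ for all $j\geq 2$, and to establish the sharp asymptotics $\mu_j\sim c\,j^{\alpha-1}$ (with a logarithmic factor at $\alpha=1$); this is the computation performed for $\alpha<1$ in \cite{Has} and for $\alpha\in[1,2)$ in \cite{Cas1}. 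The scalar direction $\omega\,d\sin x$ supplies the missing first mode, so $\partial_{(\Omega,f)}F(0,\Omega_\alpha^*,0)$ becomes an isomorphism onto $Y^{k}$; the logarithmic refinement $X^{k}_{\log}$ is exactly what compensates the borderline behaviour of $\mathcal{L}_\alpha$, which acts as a nonlocal derivative of order $\alpha-1$ with a logarithmic loss at $\alpha=1$.

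Finally, the implicit function theorem yields $\varepsilon_0>0$ and a branch $\varepsilon\mapsto(\Omega_\alpha(\varepsilon),f_\varepsilon)$ of zeros of $F$ with $(\Omega_\alpha(0),f_0)=(\Omega_\alpha^*,0)$, and these $f_\varepsilon$ translate back into the co-rotating patches $D_i^\varepsilon$ of \eqref{1-2}. The rate $\Omega_\alpha=\Omega_\alpha^*+O(\varepsilon^\alpha)$ is read off from the modulus of continuity of $F$ in $\varepsilon$: because the kernel is $\alpha$-homogeneous while $R-1=O(\varepsilon^{1+\alpha})$, the leading $\varepsilon$-correction to the $\sin x$-balance enters precisely at order $\varepsilon^{\alpha}$. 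I expect the main obstacle to be the first and third steps for $\alpha\in[1,2)$: the self-interaction is only conditionally convergent after the tangential subtraction, so proving that $F$ maps into $Y^{k}$ and that $\mathcal{L}_\alpha$ is a bounded isomorphism requires delicate commutator-type estimates in the $X^{k}_{\log}$ norm, with $\alpha=1$ the hardest case since the kernel is then logarithmically singular.
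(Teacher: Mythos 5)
Your proposal is correct and follows essentially the same strategy as the paper: write \eqref{1-3} as a functional equation, verify continuity of the map and of its Gateaux derivative in $X^{k}_{\log}$ (resp.\ $X^{k+\alpha-1}$), compute the linearized self-interaction as a Fourier multiplier $\cos(jx)\mapsto\gamma_j j\sin(jx)$ via Gamma-function identities, and conclude by the implicit function theorem at the point-vortex configuration. The one structural difference is how the first Fourier mode is handled: you apply the implicit function theorem jointly in $(\Omega,f)$, using the direction $\omega\,d\sin x$ to span the $\sin x$ mode of the full range $Y^{k-1}$ (note the range loses one derivative, so it is $Y^{k-1}$ rather than $Y^{k}$), whereas the paper first solves the $\sin x$-component for $\Omega=\Omega_\alpha(\varepsilon,f)=\Omega_\alpha^*+\varepsilon^\alpha\mathcal{R}_\Omega(\varepsilon,f)$ (Lemmas \ref{lem2-5} and \ref{lem2-10}) and then applies the implicit function theorem in $f$ alone on the reduced equation valued in $Y_0^{k-1}$. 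The two formulations are equivalent; the paper's reduction makes the expansion $\Omega_\alpha=\Omega_\alpha^*+O(\varepsilon^\alpha)$ explicit, while yours is slightly more economical. Everything you flag as delicate (boundedness and continuity of the singular integrals in $X^{k}_{\log}$, and the multiplier asymptotics $\gamma_j=O(\ln j)$ at $\alpha=1$, $O(j^{\alpha-1})$ for $\alpha>1$) is precisely where the paper spends its effort, in Lemmas \ref{lem2-2}--\ref{lem2-4} and \ref{lem2-7}--\ref{lem2-9}.
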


Then we shall move to consider the travelling patch pairs. In this situation, the initial data $\vartheta_{0,\varepsilon}$ is given by
\begin{equation}\label{1-5}
	\vartheta_{0,\varepsilon}(\boldsymbol x)=\frac{1}{\varepsilon^2}\boldsymbol\chi_{D^\varepsilon_0}-\frac{1}{\varepsilon^2}\boldsymbol\chi_{D^\varepsilon_T}.
\end{equation}
where $D_T^\varepsilon\subset \mathbb{R}^2$ satisfies
\begin{equation*}
	D_T^\varepsilon=-D_0^\varepsilon+2d\boldsymbol e_1
\end{equation*}
with some $d>1$ fixed.  A travelling patch pair centered at $(d,0)$ takes the form 
\begin{equation*}
	\vartheta_\varepsilon(\boldsymbol x,t)=\vartheta_{0,\varepsilon}(\boldsymbol x-tW\boldsymbol e_2)
\end{equation*}
with $W$ some fixed speed, and $\boldsymbol e_2$ the unit vector in $x_2$ direction. According to \eqref{1-1}, we derive
\begin{equation*}
	(\mathbf{v}_0(\boldsymbol x)-W\boldsymbol e_2)\cdot \nabla \vartheta_{0,\varepsilon}(\boldsymbol x)=0,
\end{equation*}
which yields
\begin{equation*}
	(\mathbf{v}_0(\boldsymbol x)-W\boldsymbol e_2)\cdot \mathbf n(\boldsymbol x)=0, \ \ \ \forall \, \boldsymbol x \in \partial D^\varepsilon_0\cup \partial D^\varepsilon_T.
\end{equation*}
Then the problem is reduced to finding $R(x)=1+\varepsilon^{1+\alpha}f(x)$, such that
\begin{equation}\label{1-6}
	\begin{split}
		&-W\left(\sin(x)-\frac{R'(x)}{R(x)}\cos(x)\right)\\
		&+\frac{C_\alpha}{\varepsilon^{1+\alpha}R(x)}\int\!\!\!\!\!\!\!\!\!\; {}-{} \frac{\left((R(x)R(y)+R'(x)R'(y))\sin(x-y)+(R(x)R'(y)-R'(x)R(y))\cos(x-y)\right)dy}{\left| \left(R(x)-R(y)\right)^2+4R(x)R(y)\sin^2\left(\frac{x-y}{2}\right)\right|^{\frac{\alpha}{2}}}\\
		&+\frac{C_\alpha}{\varepsilon R(x)} \int\!\!\!\!\!\!\!\!\!\; {}-{} \frac{\left((R(x)R(y)+R'(x)R'(y))\sin(x-y)+(R(x)R'(y)-R'(x)R(y))\cos(x-y)\right)dy}{\left|(\varepsilon R(x)\cos(x)+\varepsilon R(y)\cos(y)-2d)^2+(\varepsilon R(x)\sin(x)+\varepsilon R(y)\sin(y))^2\right|^{\frac{\alpha}{2}}}\\\
		&=0.
	\end{split}
\end{equation}
In Section 3, we will use implicit function theorem at $(\varepsilon, W, f)=(0,W_\alpha^*, 0)$ with
\begin{equation}\label{1-7}
	W_\alpha^*:= \frac{\alpha C_\alpha}{2(2d)^{1+\alpha}},
\end{equation}
and prove the existence of a family of travelling global solutions to \eqref{1-1} generated from $(0, W_\alpha^*, 0)$. One can also verify that $(0,W_\alpha^*, 0)$ corresponds to travelling point vortex pairs with the form $\vartheta_\varepsilon^*(\boldsymbol x,t)=\pi\boldsymbol\delta_{(0,tW_\alpha^*)}-\pi\boldsymbol\delta_{(2d,tW_\alpha^*)}$. By now, we can state our second result. 
\begin{theorem}\label{thm2}
	Suppose $\alpha\in [1,2)$. There exists $\varepsilon_0>0$ such that for any $\varepsilon\in [0,\varepsilon_0)$, \eqref{1-1} has a global travelling solution pair $\vartheta_\varepsilon(\boldsymbol x,t)=\vartheta_{0,\varepsilon}(\boldsymbol x-tW_\alpha\boldsymbol e_2)$ in $x_2$ direction centered at $(d,0)$, where $\vartheta_{0,\varepsilon}$ is defined in \eqref{1-5}, and $W_\alpha$ satisfies
	$$W_\alpha=W_\alpha^*+O(\varepsilon^\alpha)$$
	with $W_\alpha^*$ given in \eqref{1-7}.
\end{theorem}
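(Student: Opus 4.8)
The plan is to recast \eqref{1-6} as the vanishing of a single nonlinear functional and apply the implicit function theorem at the point $(\varepsilon,W,f)=(0,W_\alpha^*,0)$, in close parallel to the co-rotating case of Section~2. Concretely, I would let $F(\varepsilon,W,f)(x)$ denote the left-hand side of \eqref{1-6}, viewed as a map
\[
F\colon(-\varepsilon_0,\varepsilon_0)\times\mathbb{R}\times V\longrightarrow Y^{k-1},
\]
where $V$ is a small ball around $0$ in $X^{k}_{\log}$, chosen so that $R=1+\varepsilon^{1+\alpha}f$ stays close to the unit circle and the patch boundary remains a smooth Jordan curve. Since $f$ is even, one checks from the $x\mapsto-x$, $y\mapsto-y$ symmetry of the numerators and kernels that $F$ is odd in $x$, which is why the target is the odd space $Y^{k-1}$. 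The first task is to show that $F$ is well defined and of class $C^1$ on a neighborhood of $\varepsilon=0$; for $\alpha\in[1,2)$ the self-interaction integral is singular, and the mapping and smoothness estimates needed are precisely those provided by $X^{k}_{\log}$, so this step is identical to the one already carried out for Theorem~\ref{thm1}.

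The second step is to verify $F(0,W_\alpha^*,0)=0$. Setting $f=0$ (so $R\equiv1$, $R'\equiv0$) and expanding the interaction kernel in $\varepsilon$ via
\[
\left|\boldsymbol z(x)+\boldsymbol z(y)-2d\boldsymbol e_1\right|^{-\alpha}
=(2d)^{-\alpha}\Bigl(1+\tfrac{\alpha\varepsilon}{2d}\bigl(\cos x+\cos y\bigr)+O(\varepsilon^2)\Bigr),
\]
the $\varepsilon^{-1}$ prefactor annihilates against the $O(\varepsilon)$ correction, the leading $\int\!\!\!\!\!\!\!\!\!\;{}-{}\sin(x-y)\,dy$ vanishes, and the surviving $O(1)$ term is $\tfrac{\alpha C_\alpha}{2d(2d)^{\alpha}}\int\!\!\!\!\!\!\!\!\!\;{}-{}\sin(x-y)\cos y\,dy=W_\alpha^*\sin x$. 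The self-interaction term vanishes at $f=0$ by oddness of $\sin(x-y)$, and the first term equals $-W\sin x$; hence $F(0,W,0)=(W_\alpha^*-W)\sin x$, which vanishes exactly at $W=W_\alpha^*$. This simultaneously fixes the leading speed and confirms the base point.

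The core of the argument is the linearization. Differentiating at $(0,W_\alpha^*,0)$ gives $\partial_WF=-\sin x$, which spans the $j=1$ sine mode, while $\partial_fF$ splits into a self-interaction part and an interaction part. Because the self-interaction carries the prefactor $\varepsilon^{-(1+\alpha)}$ and $f$ enters through $\varepsilon^{1+\alpha}f$, its $f$-derivative is $O(1)$ in $\varepsilon$ and, at $\varepsilon=0$, equals the \emph{same} linearized single-patch operator $L$ appearing in Section~2 (the self-interaction integrand in \eqref{1-6} is term-for-term identical to the one in \eqref{1-3}); the interaction part carries only $\varepsilon^{-1}$ and therefore contributes $O(\varepsilon^{\alpha})$, which vanishes in the limit. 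Thus
\[
\partial_{(W,f)}F(0,W_\alpha^*,0)(\delta W,\delta f)=-\delta W\,\sin x+L\,\delta f,
\]
and it suffices to know that $L\colon X^{k}_{\log}\to Y^{k-1}_0$ is an isomorphism onto the sine modes $j\ge2$. At leading order $L$ is a Fourier multiplier: writing $f=\sum_{j\ge2}a_j\cos(jx)$ one finds $Lf=\sum_{j\ge2}\beta_j^{\alpha}a_j\sin(jx)$ with coefficients $\beta_j^{\alpha}$ built from ratios of Gamma functions, and invertibility reduces to $\beta_j^{\alpha}\neq0$ for all $j\ge2$ together with two-sided bounds on $\beta_j^{\alpha}$ matched to the $X^{k}_{\log}\to Y^{k-1}_0$ scaling.

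The hard part will be precisely this isomorphism claim for $\alpha\in[1,2)$. Unlike the subcritical range $\alpha\in[0,1)$ treated by Hmidi and Mateu \cite{HM}, the kernel $|\sin(\tfrac{x-y}{2})|^{-\alpha}$ is no longer integrable against a bounded perturbation, so $L$ cannot be controlled in Hölder spaces; the logarithmic correction built into $X^{k}_{\log}$ is exactly what restores boundedness of $L$ and of its inverse at the endpoint $\alpha=1$, and establishing the sharp $H^k$-type bounds for this singular operator is the technically demanding point. Since $L$ is literally the operator analyzed for Theorem~\ref{thm1}, I would invoke that analysis verbatim. Granting it, $\partial_{(W,f)}F(0,W_\alpha^*,0)$ is an isomorphism $\mathbb{R}\times X^{k}_{\log}\to Y^{k-1}=\mathrm{span}\{\sin x\}\oplus Y^{k-1}_0$, and the implicit function theorem produces $C^1$ branches $\varepsilon\mapsto\bigl(W_\alpha(\varepsilon),f_\varepsilon\bigr)$ with $F(\varepsilon,W_\alpha(\varepsilon),f_\varepsilon)=0$ for $\varepsilon\in[0,\varepsilon_0)$. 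Finally, tracking the $O(\varepsilon^{\alpha})$ interaction contribution to $\partial_fF$ through the implicit-function expansion yields the asymptotics $W_\alpha=W_\alpha^*+O(\varepsilon^{\alpha})$, completing the proof.
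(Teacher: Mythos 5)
Your strategy is essentially the paper's: the paper likewise observes that the self-interaction term $H_2$ in \eqref{1-6} coincides with $G_2$ from the co-rotating problem and that the interaction term $H_3$ equals $-G_3$ with $m=2$, imports the continuity and linearization lemmas of Section 2 wholesale, and concludes by the implicit function theorem at $(0,W_\alpha^*,0)$. The only structural difference is the packaging of the IFT: you invert the full Jacobian $(\delta W,\delta f)\mapsto-\delta W\sin x+L\,\delta f$ on $\mathbb{R}\times X\to\mathrm{span}\{\sin x\}\oplus Y_0^{k-1}$ in one stroke, whereas the paper first solves for $W=W_\alpha(\varepsilon,f)=W_\alpha^*+O(\varepsilon^\alpha)$ so that the first Fourier coefficient of $H^\alpha$ vanishes and then applies the IFT in $f$ alone with target $Y_0^{k-1}$; these are equivalent, and yours is arguably cleaner. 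One concrete correction: you place $f$ in $X^{k}_{\log}$ uniformly for $\alpha\in[1,2)$, but that space is correct only at the endpoint $\alpha=1$. For $1<\alpha<2$ the multiplier of the linearized operator satisfies $\gamma_j=O(j^{\alpha-1})$, so $L$ loses $\alpha-1$ derivatives and is not even bounded from $X^{k}_{\log}$ into $Y^{k-1}$, let alone an isomorphism onto $Y_0^{k-1}$; the correct domain is $X^{k+\alpha-1}$, as in Lemmas \ref{lem2-6}--\ref{lem2-9}. With that substitution (which is what invoking the Theorem \ref{thm1} analysis verbatim actually supplies), your argument goes through.
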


In the study of global solutions to \eqref{1-1}, the regularity of vortices boundary is another challenging problem. In \cite{Che}, Chemin applied paradifferential calculus to show $C^{1,\gamma}$ regularity of patch boundary in the case $\alpha=0$. Then Bertozzi and Constantin \cite{Ber} obtained the same result using a different approach. In resent years, the contour dynamics equation was found to be an effective tool to investigate the boundary regularity.  Hmidi et al. \cite{Hmi} proved the $C^\infty$ regularity for simply-connected V-states in $\alpha=0$, and Castro et al. \cite{Cas1} generalized this result to $\alpha\in (0,2)$ by bootstrap. We will follow the strategy in \cite{Cas1} and prove following result for co-rotating and travelling global solutions.
\begin{theorem}\label{thm3}
	Suppose $\alpha\in [1,2)$ and $R(x)$ be a solution to \eqref{1-3} or \eqref{1-6}. Then $R(x)$ belongs to $C^\infty$ and parameterizes a convex patch. 
\end{theorem}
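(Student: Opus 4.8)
\emph{Strategy.} The plan is to treat the regularity of \eqref{1-3} and \eqref{1-6} simultaneously, since the two functionals differ only through their ``background'' contributions---the $\Omega$- and $W$-terms together with the inter-vortex (cross-interaction) integrals---whereas the genuinely singular part, the self-interaction integral
$$ J[f](x)=\int\!\!\!\!\!\!\!\!\!\; {}-{} \frac{(R(x)R(y)+R'(x)R'(y))\sin(x-y)+(R(x)R'(y)-R'(x)R(y))\cos(x-y)}{\left|(R(x)-R(y))^2+4R(x)R(y)\sin^2(\frac{x-y}{2})\right|^{\alpha/2}}\,dy, $$
is common to both. First I would record that the solution produced by the implicit function theorem in Sections~2 and~3 already lies in the base space $X^{k_0}_{\log}$ for the index $k_0$ used there, with a norm bounded uniformly in $\varepsilon\in[0,\varepsilon_0)$. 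The whole of Theorem~\ref{thm3} then reduces to two self-contained assertions: a bootstrap gaining one level of regularity, $f\in X^{k}_{\log}\Rightarrow f\in X^{k+1}_{\log}$, which iterates to $f\in\bigcap_k H^k=C^\infty$; and a convexity estimate that needs only the uniform $C^2$ bound already furnished by the construction.

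\emph{The bootstrap.} I would first dispose of the harmless terms. Because the patch centres are separated by a fixed distance (controlled by $d>1$) and $\varepsilon$ is small, the denominators in the cross-interaction sums of \eqref{1-3} and \eqref{1-6} stay bounded away from zero; their integrands are jointly smooth in $(x,y)$ whenever $f\in C^1$, so these terms define $C^\infty$ functions of $x$ and never obstruct a gain. Likewise the prefactor $1/R(x)$ and the algebraic $\Omega$- and $W$-terms are smooth functions of $(f,f')$, since $R=1+\varepsilon^{1+\alpha}f$ is bounded below, and they contribute at most the derivatives of $f$ already present. Thus the only term that can limit regularity is $J[f]$. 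The mechanism of the gain is that, after the tangential subtraction that renders the integral convergent for $\alpha\ge1$, the principal part of $J$ is the finite-difference operator
$$ f\longmapsto \frac{C_\alpha}{R(x)}\int\!\!\!\!\!\!\!\!\!\; {}-{} \frac{\cos(x-y)\,\big(f'(y)-f'(x)\big)}{\big(2|\sin(\frac{x-y}{2})|\big)^{\alpha}}\,dy, $$
an elliptic variable-coefficient operator of order $\alpha$ (its symbol behaves like $|n|^{\alpha}$, with an extra factor $\log|n|$ exactly at the borderline $\alpha=1$, which is precisely why $X^{k}_{\log}$ is introduced). Differentiating the equation $k$ times, commuting $\partial_x^{k}$ through $J$, and isolating the top-order contribution, I would express the order-$\alpha$ singular integral of $\partial_x^{k}f$ in terms of quantities controlled by $\|f\|_{H^{k}}$ and by the logarithmic seminorm defining $X^{k}_{\log}$. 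Inverting the elliptic principal part, whose symbol is nonzero on the relevant modes $n\ge 2$, then upgrades $f$ by one derivative and closes the induction.

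\emph{Main obstacle.} I expect the heart of the matter to be this commutator analysis for $\alpha\in[1,2)$: once $\partial_x^{k}$ is pushed inside the non-integrable kernel $|\sin(\frac{x-y}{2})|^{-\alpha}$, one must show that every term except the leading finite-difference operator on $\partial_x^{k}f$ is of strictly lower order, and that this leading term itself lands in $L^2$---which at $\alpha=1$ is exactly the statement that the $X^{k}_{\log}$ seminorm of $f$ is finite. Controlling these singular integrals is where the Sobolev framework of \cite{Cas1} and the log-refined space are indispensable; the estimates for the smooth cross-terms and for the composition with $1/R$ are routine by comparison.

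\emph{Convexity.} Finally, I would use the polar representation $\boldsymbol z(x)=\varepsilon R(x)(\cos x,\sin x)$, for which the signed curvature of $\partial D_0^\varepsilon$ has numerator $R^2+2R'^2-RR''$. Substituting $R=1+\varepsilon^{1+\alpha}f$ gives
$$ R^2+2R'^2-RR''=1+\varepsilon^{1+\alpha}\big(2f-f''\big)+O(\varepsilon^{2+2\alpha}), $$
and since the construction provides a bound on $\|f\|_{C^2}$ uniform in $\varepsilon$, the right-hand side exceeds $\tfrac12$ once $\varepsilon_0$ is small enough. Hence $\partial D_0^\varepsilon$ is a convex curve; as every remaining component $D_i^\varepsilon$ (respectively $D_T^\varepsilon$) is a rigid motion of $D_0^\varepsilon$, each vortex component is convex, completing the proof.
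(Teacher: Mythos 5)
Your proposal is correct and follows essentially the same route as the paper: isolate the singular self-interaction integral, identify its principal part as the order-$\alpha$ finite-difference operator acting on $f'$ (with the logarithmic loss at $\alpha=1$ motivating $X^k_{\log}$), invert it to gain regularity by bootstrap, and verify convexity via the signed curvature $\frac{R^2+2R'^2-RR''}{(R^2+R'^2)^{3/2}}=1+O(\varepsilon)$. The only cosmetic difference is that the paper freezes the coefficient and writes the equation as $L(\partial^{k-1}f)+S(\partial^{k-1}f)=J(f)$ with $L$ constant-coefficient and invertible and $S$ a same-order but $O(\varepsilon)$-small perturbation, rather than inverting your variable-coefficient operator directly; since $R=1+\varepsilon^{1+\alpha}f$ is a small perturbation of $1$, the two are equivalent.
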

\begin{remark}
For the case $a\in [0,1)$, one can also use this method to prove the $C^\infty$ regularity of boundary.
\end{remark}

This paper is organized as follows. In section 2, we prove the existence of co-rotating patch solutions with $m$-fold symmetry. In Section 3, we investigate the existence of travelling patch pair solutions. In Section 4, we finish our paper by proving the $C^\infty$ regularity of the patch boundary, and the convexity of $D_0^\varepsilon$. Some auxiliary results will be given in the appendix.

\section{Existence of co-rotating global solutions for the gSQG equation }

Suppose $\varepsilon\in (-\frac{1}{2},\frac{1}{2})$, and let $R(x)=1+\varepsilon|\varepsilon|^\alpha f(x)$. According to \eqref{1-3}, we will consider $G^\alpha(\varepsilon, \Omega, f)$ with the form
\begin{equation*}
	G^\alpha(\varepsilon, \Omega, f)=G_1+G_2+G_3,
\end{equation*}
where
\begin{equation}\label{2-1}
	G_1=\Omega\left(|\varepsilon|^{2+\alpha} f'(x)-\frac{\varepsilon|\varepsilon|^\alpha f'(x)d\cos(x)}{1+\varepsilon|\varepsilon|^\alpha f(x)}+d\sin(x)\right),
\end{equation}

\begin{equation}\label{2-2}
	\begin{split}
		G_2&=\frac{C_\alpha}{\varepsilon|\varepsilon|^{\alpha}}\int\!\!\!\!\!\!\!\!\!\; {}-{} \frac{(1+\varepsilon|\varepsilon|^\alpha f(y))\sin(x-y)dy}{\left( |\varepsilon|^{2+2\alpha}\left(f(x)-f(y)\right)^2+4(1+\varepsilon|\varepsilon|^\alpha f(x))(1+\varepsilon|\varepsilon|^\alpha f(y))\sin^2\left(\frac{x-y}{2}\right)\right)^{\frac{\alpha}{2}}}\\
		&+C_\alpha\int\!\!\!\!\!\!\!\!\!\; {}-{} \frac{(f'(y)-f'(x))\cos(x-y)dy}{\left( |\varepsilon|^{2+2\alpha}\left(f(x)-f(y)\right)^2+4(1+\varepsilon|\varepsilon|^\alpha f(x))(1+\varepsilon|\varepsilon|^\alpha f(y))\sin^2\left(\frac{x-y}{2}\right)\right)^{\frac{\alpha}{2}}}\\		
		&+\frac{C_\alpha f'(x)}{1+\varepsilon|\varepsilon|^\alpha f(x)}\int\!\!\!\!\!\!\!\!\!\; {}-{} \frac{(f(x)-f'(y))\cos(x-y)dy}{\left( |\varepsilon|^{2+2\alpha}\left(f(x)-f(y)\right)^2+4(1+\varepsilon|\varepsilon|^\alpha f(x))(1+\varepsilon|\varepsilon|^\alpha f(y))\sin^2\left(\frac{x-y}{2}\right)\right)^{\frac{\alpha}{2}}}\\	
		&+\frac{C_\alpha\varepsilon|\varepsilon|^\alpha}{1+\varepsilon|\varepsilon|^\alpha f(x)}\int\!\!\!\!\!\!\!\!\!\; {}-{} \frac{f'(x)f'(y)\sin(x-y)dy}{\left( |\varepsilon|^{2+2\alpha}\left(f(x)-f(y)\right)^2+4(1+\varepsilon|\varepsilon|^\alpha f(x))(1+\varepsilon|\varepsilon|^\alpha f(y))\sin^2\left(\frac{x-y}{2}\right)\right)^{\frac{\alpha}{2}}}\\	
		&=G_{21}+G_{22}+G_{23}+G_{24},
	\end{split}
\end{equation}
and
\begin{equation}\label{2-3}
		\begin{split}
		G_3&=\sum\limits_{i=1}^{m-1}\frac{C_\alpha}{\varepsilon}\int\!\!\!\!\!\!\!\!\!\; {}-{} \frac{(1+\varepsilon|\varepsilon|^\alpha f(y))\sin(x-y-\frac{2\pi i}{m})dy}{\left| \left(\boldsymbol{z}(x)-(d,0)\right)-Q_{\frac{2\pi i}{m}}\left(\boldsymbol{z}(y)-(d,0)\right)\right|^\alpha}\\
		&+\sum\limits_{i=1}^{m-1}\frac{C_\alpha}{(1+\varepsilon|\varepsilon|^\alpha f(x))}\int\!\!\!\!\!\!\!\!\!\; {}-{} \frac{\varepsilon |\varepsilon|^{2\alpha}f'(x)f'(y)\sin(x-y-\frac{2\pi i}{m})dy}{\left| \left(\boldsymbol{z}(x)-(d,0)\right)-Q_{\frac{2\pi i}{m}}\left(\boldsymbol{z}(y)-(d,0)\right)\right|^\alpha}\\
		&+\sum\limits_{i=1}^{m-1}\frac{C_\alpha}{(1+\varepsilon|\varepsilon|^\alpha f(x))}\int\!\!\!\!\!\!\!\!\!\; {}-{} \frac{(|\varepsilon|^\alpha f(x)f'(y)-|\varepsilon|^\alpha f(x)f'(y))\cos(x-y-\frac{2\pi i}{m})dy}{\left| \left(\boldsymbol{z}(x)-(d,0)\right)-Q_{\frac{2\pi i}{m}}\left(\boldsymbol{z}(y)-(d,0)\right)\right|^\alpha}\\
		&=G_{31}+G_{32}+G_{33}
	\end{split}
\end{equation}

To show that $G^\alpha(\varepsilon, \Omega, f)$ satisfies the conditions for the implicit function theorem, we will verify that $G^\alpha(\varepsilon, \Omega, f)$ and $\partial_fG^\alpha(\varepsilon, \Omega, f)$ are both continuous from $\left(-\frac{1}{2}, \frac{1}{2}\right)\times \mathbb{R} \times V^r$ to $Y^{k-1}$, where $V^r$ is a small neighborhood of origin in $X^{k}_{\log}$ $(\alpha=1)$, or $X^{k+\alpha-1}$ $(1<\alpha<2)$. Moreover, we will prove $\partial_fG^\alpha(0, \Omega, 0)$ is an isomorphism from $X^{k}_{\log}$ $(\alpha=1)$, or $X^{k+\alpha-1}$ $(1<\alpha<2)$ to $Y^{k-1}_0$. The key point is to adjust the angular velocity $\Omega=\Omega_\alpha(\varepsilon,f)$ such that the range of $G^\alpha(\varepsilon, \Omega, f)$ is in $Y^{k-1}_0$. Then we can apply  implicit function theorem near $(\varepsilon,\Omega,f)=(0,\Omega_\alpha,0)$ to obtain the existence of co-rotating global solutions.

\subsection{Existence for the case $\alpha=1$}

We first give an alternative characterization for the space $X^{k}_{\log}$.
\begin{lemma}\label{lem2-1}
	For $g(x)=\sum\limits_{j=2}^{\infty}a_j\cos(jx)$, $g\in X^{k}_{\log}$ if and only if
	\begin{equation*}
		g\in X^k, \ \ \ \sum\limits_{j=2}^{\infty}|a_j|^2j^{2k}(1+\ln j)^2<\infty.
	\end{equation*}
\end{lemma}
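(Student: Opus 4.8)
The plan is to diagonalize the singular integral operator
\begin{equation*}
	L_k g(x):=\int_0^{2\pi}\frac{\partial^kg(x-y)-\partial^kg(x)}{|\sin(y/2)|}\,dy
\end{equation*}
on the Fourier basis and read off its multiplier. Writing $\partial^k\cos(jx)=j^k\cos(jx+\tfrac{k\pi}{2})$ and setting $\phi:=\tfrac{k\pi}{2}$, for a single mode one expands $\cos(j(x-y)+\phi)-\cos(jx+\phi)=\cos(jx+\phi)(\cos(jy)-1)+\sin(jx+\phi)\sin(jy)$. Since $|\sin(y/2)|=\sin(y/2)$ on $[0,2\pi]$ is invariant under $y\mapsto 2\pi-y$ while $\sin(jy)$ is anti-invariant, the sine contribution integrates to zero, so that
\begin{equation*}
	\int_0^{2\pi}\frac{\cos(j(x-y)+\phi)-\cos(jx+\phi)}{|\sin(y/2)|}\,dy=c_j\cos(jx+\phi),\qquad c_j:=\int_0^{2\pi}\frac{\cos(jy)-1}{|\sin(y/2)|}\,dy.
\end{equation*}
Thus $L_k$ acts diagonally, multiplying the $j$-th mode $a_j\cos(jx)$ by $j^kc_j$, and the whole question reduces to the size of $c_j$.

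Next I would evaluate $c_j$ exactly. The substitution $u=y/2$ and the identity $\cos(2ju)-1=-2\sin^2(ju)$ give $c_j=-4\int_0^{\pi}\frac{\sin^2(ju)}{\sin u}\,du$. The crucial step is the telescoping relation coming from $\sin^2(ju)-\sin^2((j-1)u)=\sin((2j-1)u)\sin u$, which after dividing by $\sin u$ and integrating yields $\int_0^\pi\frac{\sin^2(ju)-\sin^2((j-1)u)}{\sin u}\,du=\int_0^\pi\sin((2j-1)u)\,du=\frac{2}{2j-1}$. Summing and using that the $j=0$ term vanishes gives the closed form $c_j=-8\sum_{l=1}^{j}\frac{1}{2l-1}$. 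Since $\sum_{l=1}^{j}\frac{1}{2l-1}$ is positive, increasing, bounded below by $1$ and bounded above by $\frac{3}{2}+\frac12\ln j$, there are constants $0<c_{\flat}\le c_{\sharp}$ with $c_{\flat}(1+\ln j)\le|c_j|\le c_{\sharp}(1+\ln j)$ for all $j\ge2$.

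Finally I would assemble the $L^2$ identity. After justifying the interchange of the integral with the series (by truncating $g$, treating each mode as above, and passing to the $L^2$ limit), orthogonality of $\{\cos(jx+\phi)\}_{j\ge2}$ gives $\|L_kg\|_{L^2}^2=\pi\sum_{j\ge2}|a_j|^2j^{2k}c_j^2$. Combined with the two-sided bound on $|c_j|$, this shows $\|L_kg\|_{L^2}<\infty$ if and only if $\sum_{j\ge2}|a_j|^2j^{2k}(1+\ln j)^2<\infty$; the latter trivially forces $g\in H^k$ since $(1+\ln j)^2\ge1$, which yields the stated equivalence. I expect the main obstacle to be the exact evaluation of the singular multiplier $c_j$, in particular spotting the telescoping identity that turns the divergent-looking integral into the partial sums of $\sum 1/(2l-1)$, together with the careful justification that $L_k$ may be applied term by term and that Parseval is legitimate despite the logarithmic loss.
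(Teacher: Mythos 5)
Your proposal is correct and follows essentially the same route as the paper: the paper likewise diagonalizes the singular integral on the Fourier basis via its Lemma \ref{A-1} (which records exactly your multiplier $\beta_j=8\sum_{i=1}^{j}\frac{1}{2i-1}=O(\ln j)$ for $\alpha=1$, citing Lemma 2.6 of \cite{Cas1}) and then concludes by Plancherel. The only difference is that you supply a self-contained derivation of that multiplier through the telescoping identity, a computation the paper delegates to the appendix and the reference.
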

\begin{proof}
	By Lemma \ref{A-1} in Appendix and Plancherel's identity, it holds
	\begin{equation*}
		\left\|\int_0^{2\pi}\frac{\partial^kg(x-y)-\partial^kg(x)}{|\sin(\frac{y}{2})|}dy\right\|_{L^2}^2=\sum\limits_{j=2}^{\infty}(1+O(\ln j))^2|\widehat{\partial^kg}_j|^2=\sum\limits_{j=2}^{\infty}|a_j|^2j^{2k}(1+O(\ln j))^2,
	\end{equation*}
   from which the equivalence of two characterizations for the space $X^{k}_{\text{log}}$ is obvious.
\end{proof}

Denote $V^r$ as the open neighborhood of zero in $X^{k}_{\text{log}}$
\begin{equation*}
	V^r:=\left\{g\in X^{k}_{\log}: \ \|g\|_{X^{k}_{\log}}<r\right\}
\end{equation*}
with $0<r<1$ and $k\ge 3$. In the following two lemmas, we investigate the regularity of $G^1(\varepsilon, \Omega, f)$.
\begin{lemma}\label{lem2-2}
	$G^1(\varepsilon, \Omega, f): \left(-\frac{1}{2}, \frac{1}{2}\right)\times \mathbb{R} \times V^r \rightarrow Y^{k-1}$ is continuous.
\end{lemma}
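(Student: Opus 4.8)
The plan is to use the decomposition $G^1=G_1+G_2+G_3$ from \eqref{2-1}--\eqref{2-3} and treat the three pieces separately, the self-interaction term $G_2$ being the only genuinely singular one when $\alpha=1$. Before estimating norms I would record two structural facts. First, since $f$ is even and each kernel is invariant under $(x,y)\mapsto(-x,-y)$, every summand of $G^1$ is an odd function of $x$; hence the range does sit in the space of $\sin$-series, and it remains only to control the $H^{k-1}$ norm. Second, for $|\varepsilon|<\tfrac12$ and $f\in V^r$ with $r<1$ we have $R=1+\varepsilon|\varepsilon|f$ bounded below away from $0$ and bounded above in $H^k$, so the factors $1/(1+\varepsilon|\varepsilon|f)$ appearing throughout are harmless: by the Banach-algebra property of $H^{k-1}$ (valid since $k\ge 3$) together with the standard composition/Moser estimate for $t\mapsto 1/(1+t)$, these factors are bounded in $H^k$ and depend continuously on $f$.

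With this in hand, $G_1$ is immediate: it is a finite combination of $f'$, $\sin x$, $\cos x$ and the factor $1/R$, multiplied by explicit continuous functions of $(\varepsilon,\Omega)$, so the algebra property gives the $H^{k-1}$ bound and continuity in all three variables. The interaction term $G_3$ is regular because $d>1$ forces the distinct rotated copies to stay separated: the vectors $-d(\boldsymbol e_1-Q_{\frac{2\pi i}{m}}\boldsymbol e_1)$ are nonzero for $1\le i\le m-1$, so the denominator $|(\boldsymbol{z}(x)-(d,0))-Q_{\frac{2\pi i}{m}}(\boldsymbol{z}(y)-(d,0))|^\alpha$ stays bounded below uniformly for small $\varepsilon$, and the kernel is smooth in $(x,y,\varepsilon,f)$. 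The only subtlety is the prefactor $1/\varepsilon$, which threatens continuity at $\varepsilon=0$; I would resolve it by Taylor expanding the kernel in $\varepsilon$ and using $\frac{1}{2\pi}\int_0^{2\pi}\sin(x-y-\tfrac{2\pi i}{m})\,dy=0$ to cancel the would-be-singular $y$-independent leading term, after which $G_3$ extends to a genuinely continuous (indeed $C^\infty$) map into $Y^{k-1}$ by differentiating under the integral sign.

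The heart of the matter is $G_2$. At $\varepsilon=0$ its kernel degenerates to $\bigl(4\sin^2\tfrac{x-y}{2}\bigr)^{-1/2}=\tfrac12|\sin\tfrac{x-y}{2}|^{-1}$, so $G_2$ is a singular-integral operator of exactly the borderline (Hilbert-transform-like) type, which is precisely why the log-weighted space $X^{k}_{\log}$ was introduced. I would treat $G_{21},\dots,G_{24}$ individually, in each case writing the full kernel as the model kernel $|\sin\tfrac{x-y}{2}|^{-1}$ plus an $\varepsilon$- and $f$-dependent remainder that is strictly less singular; the remainder is handled by the algebra property and dominated convergence. For the model part, the dangerous contribution to $\partial_x^{k-1}G_2$ is the one in which all $k-1$ derivatives fall on $f$ inside the integral, producing expressions of the schematic form $\int_0^{2\pi}\frac{\partial^{k-1}f(x-y)-\partial^{k-1}f(x)}{|\sin(y/2)|}\,dy$. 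By Lemma~\ref{lem2-1} (equivalently Lemma~\ref{A-1}) the $L^2$ norm of exactly this object is finite and controlled by $\|f\|_{X^{k}_{\log}}$, which closes the $H^{k-1}$ bound; the same difference structure, now applied to $f_1-f_2$, yields continuity in $f$, while continuity in $\varepsilon$ follows from the smooth dependence of the remainder kernel on $\varepsilon$.

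I expect this last step to be the main obstacle: controlling the top-order term of $G_2$ forces one to work exactly at the logarithmic borderline, where the naive $H^{k}\to H^{k-1}$ mapping fails and only the finer $X^{k}_{\log}$ norm suffices. The two features that make it go through are the tangential subtraction built into the contour-dynamics formulation for $\alpha\ge 1$ (which produces the difference quotient $\partial^{k-1}f(x-y)-\partial^{k-1}f(x)$ rather than $\partial^{k-1}f(x-y)$ alone) and the quantitative estimate of Lemma~\ref{lem2-1}; the remaining work is the bookkeeping that verifies every lower-order term produced by the Leibniz expansion is strictly less singular, hence harmless.
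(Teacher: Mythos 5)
Your proposal follows essentially the same route as the paper: the parity argument for landing in the $\sin$-series, the algebra property for $G_1$ and the $1/R$ factors, Taylor expansion of the kernels to cancel the apparent $1/\varepsilon$ singularities in $G_{21}$ and $G_{31}$ (using that the leading $y$-independent term integrates to zero against $\sin$), the positive lower bound on the denominators in $G_3$, and control of the top-order difference quotient of $G_2$ at the logarithmic borderline via the characterization of $X^k_{\log}$ in Lemma~\ref{lem2-1}, with continuity in $f$ obtained from the same difference structure applied to $f_1-f_2$. The only quibble is an indexing slip in the schematic top-order term (the genuinely worst contribution carries $\partial^k f$, not $\partial^{k-1}f$, in the numerator, which is exactly what requires the full $\|f\|_{X^k_{\log}}$ norm), but this does not affect the argument.
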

\begin{proof}
	By \eqref{2-1} it is easy to show that $G_1: \left(-\frac{1}{2}, \frac{1}{2}\right)\times \mathbb{R} \times V^r \rightarrow Y^{k-1}$ is continuous. We can rewrite
	$G_1$ as 
	\begin{equation}\label{2-4}
		G_1=\Omega \left(d\sin(x)+\varepsilon|\varepsilon|\mathcal{R}_1(\varepsilon,f)\right),
	\end{equation}
    where $\mathcal{R}_1(\varepsilon,f): \left(-\frac{1}{2}, \frac{1}{2}\right)\times \mathbb{R} \times V^r \rightarrow Y^{k-1}$ is continuous. 
	
	To consider $G_2$ given in \eqref{2-2}, we will first prove the range of $G_2$ is in $Y^{k-1}$. Notice that $f(x)\in V^r$ is an even function, and $f'(x)$ is odd. By changing $y$ to $-y$ in $G_2$, we can deduce that $G_2(\varepsilon,f)$ is odd.  
	
	Since $R(x)=1+\varepsilon|\varepsilon| f(x)$, the possible singularity for $\varepsilon=0$ may occur only when we take zeroth derivative of $G_{21}$. To prove this case can not happen, we will resort to Taylor formula:
	\begin{equation}\label{2-5}
		\frac{1}{(A+B)^\lambda}=\frac{1}{A^\lambda}-\lambda\int_0^1\frac{B}{(A+tB)^{1+\lambda}}dt
	\end{equation}
	with
	$$A:=4\sin^2\left(\frac{x-y}{2}\right),$$ 
	and $$B:=\varepsilon|\varepsilon|(f^2(x)-f^2(y))+\sin^2\left(\frac{x-y}{2}\right)\left(4f(x)+4f(y)+\varepsilon|\varepsilon| f(x)f(y)\right).$$
	Then from \eqref{2-2}, we use the fact $\sin(\cdot)$ is odd to obtain 
	\begin{equation}\label{2-6}
		\begin{split}
			G_{21}&=\frac{1}{|\varepsilon|\varepsilon}\int\!\!\!\!\!\!\!\!\!\; {}-{} \frac{\sin(x-y)dy}{\left(A+\varepsilon|\varepsilon| B+O(\varepsilon^4)  \right)^{\frac{1}{2}}}+\int\!\!\!\!\!\!\!\!\!\; {}-{} \frac{f(y)\sin(x-y)dy}{\left(A+\varepsilon|\varepsilon| B+O(\varepsilon^4)   \right)^{\frac{1}{2}}}\\
			&=\frac{1}{\varepsilon^{1+\alpha}}\int\!\!\!\!\!\!\!\!\!\; {}-{} \frac{\sin(x-y)dy}{ A^{\frac{1}{2}}}-\frac{1}{2}\int\!\!\!\!\!\!\!\!\!\; {}-{} \int_0^1 \frac{B\sin(x-y)dt dy}{\left(A+t\varepsilon|\varepsilon| B+O(\varepsilon^4) \right)^{\frac{3}{2}}}+\int\!\!\!\!\!\!\!\!\!\; {}-{} \frac{f(y)\sin(x-y)dy}{\left(A+\varepsilon|\varepsilon|B +O(\varepsilon^4)  \right)^{\frac{1}{2}}}\\
			&=-\frac{1}{2}\int\!\!\!\!\!\!\!\!\!\; {}-{} \int_0^1 \frac{B\sin(x-y)dy}{A^{\frac{3}{2}}}+\frac{3\varepsilon|\varepsilon|}{4}\int\!\!\!\!\!\!\!\!\!\; {}-{}\int_0^1 \int_0^1 \frac{t B\sin(x-y)d\tau dt dy}{\left( A + \tau t\varepsilon|\varepsilon| B +O(\varepsilon^4)  \right)^{\frac{5}{2}}}\\
			&\ \ \ \ +\int\!\!\!\!\!\!\!\!\!\; {}-{} \frac{f(y)\sin(x-y)dy}{A^{\frac{1}{2}}}-\frac{\varepsilon|\varepsilon|}{2}\int\!\!\!\!\!\!\!\!\!\; {}-{} \int_0^1 \frac{Bf(y)\sin(x-y)dt dy}{\left(A+t\varepsilon|\varepsilon| B+O(\varepsilon^4) \right)^{\frac{3}{2}}}\\
			&=\frac{1}{4}\int\!\!\!\!\!\!\!\!\!\; {}-{} \frac{f(y)\sin(x-y)dy}{\left|\sin(\frac{x-y}{2})\right|}+\varepsilon|\varepsilon|\mathcal{R}_{11}(\varepsilon,f),
		\end{split}
	\end{equation}
    where $\mathcal{R}_{11}(\varepsilon,f)$ is not singular with respect to $\varepsilon$.
    
    Now, we take $\partial^{k-1}$ derivatives of $G_2$ with respect to $x$. Since $G_{21}$ is less singular than the other three terms, we will begin with $G_{22}$:
    \begin{equation*}
    	\begin{split}
    		\partial^{k-1}G_{22}&=\int\!\!\!\!\!\!\!\!\!\; {}-{}\frac{(\partial^kf(y)-\partial^kf(x))\cos(x-y)dy}{\left( |\varepsilon|^4\left(f(x)-f(y)\right)^2+4(1+\varepsilon|\varepsilon| f(x))(1+\varepsilon|\varepsilon| f(y))\sin^2\left(\frac{x-y}{2}\right)\right)^{\frac{1}{2}}}\\	
    		& \ \ \ \ -\int\!\!\!\!\!\!\!\!\!\; {}-{}\frac{\cos(x-y)}{\left( |\varepsilon|^4\left(f(x)-f(y)\right)^2+4(1+\varepsilon|\varepsilon| f(x))(1+\varepsilon|\varepsilon| f(y))\sin^2\left(\frac{x-y}{2}\right)\right)^{\frac{3}{2}}}\\
    		& \ \ \ \ \times\left((f(x)-f(y))(f'(x)-f'(y))+2(f(x)f'(y)+f'(x)f(y))\sin^2(\frac{x-y}{2})\right)\\
    		& \ \ \ \ \times(\partial^{k-1}f(y)-\partial^{k-1}f(x))dy +l.o.t,
    	\end{split}
    \end{equation*}
    where $l.o.t$ denotes the lower order terms. Since $f(x)\in X^k_{\log}$ and $k\ge3$, we have $\|\partial^if\|_{L^\infty}\le C \|f\|_{X^k_{\log}}<\infty$ for $i=0,1,2$. By H\"older inequality and mean value theorem, we can conclude that
    \begin{equation*}
    	\begin{split}
    	\left\|\partial^{k-1}G_{22}\right\|_{L^2}&\le C\left\|\int\!\!\!\!\!\!\!\!\!\; {}-{}\frac{\partial^kf(x)-\partial^kf(y)}{|\sin(\frac{x-y}{2})|}dy\right\|_{L^2}+C\left\|\int\!\!\!\!\!\!\!\!\!\; {}-{}\frac{\partial^{k-1}f(x)-\partial^{k-1}f(y)}{|\sin(\frac{x-y}{2})|}dy\right\|_{L^2}\\
    	&\le C \|f\|_{X^k_{\log}}+C\|f\|_{X^{k-1}_{\log}}<\infty
    	\end{split}
    \end{equation*}
    Notice that the bound for $\left\|\partial^{k-1}G_{23}\right\|_{L^2}$ is easier to obtain than $\left\|\partial^{k-1}G_{22}\right\|_{L^2}$. Actually, $G_{23}$ is less singular than $G_{22}$. So we turn to focus on $G_{24}$, and take $\partial^{k-1}$ derivatives to obtain
    \begin{equation*}
    	\begin{split}
    		\partial^{k-1}G_{24}&=\frac{-\varepsilon|\varepsilon|\partial^{k-1}f(x)}{(1+\varepsilon|\varepsilon|f(x))^2}\int\!\!\!\!\!\!\!\!\!\; {}-{}\frac{f'(x)f'(y)\sin(x-y)dy}{\left(|\varepsilon|^4\left(f(x)-f(y)\right)^2+4(1+\varepsilon|\varepsilon| f(x))(1+\varepsilon|\varepsilon| f(y))\sin^2\left(\frac{x-y}{2}\right)\right)^{\frac{1}{2}}}\\	
    		& \ \ \ \ +\frac{\varepsilon|\varepsilon|}{1+\varepsilon|\varepsilon|f(x)}\int\!\!\!\!\!\!\!\!\!\; {}-{}\frac{\sin(x-y)}{\left( |\varepsilon|^4\left(f(x)-f(y)\right)^2+4(1+\varepsilon|\varepsilon| f(x))(1+\varepsilon|\varepsilon| f(y))\sin^2\left(\frac{x-y}{2}\right)\right)^{\frac{1}{2}}}\\   	
    		& \ \ \ \ \times(f'(x)\partial^kf(y)+\partial^kf(x)f'(y))dy\\
    		& \ \ \ \ -	\frac{\varepsilon|\varepsilon|}{1+\varepsilon|\varepsilon|f(x)}\int\!\!\!\!\!\!\!\!\!\; {}-{}\frac{f'(x)f'(y)\sin(x-y)}{\left( |\varepsilon|^4\left(f(x)-f(y)\right)^2+4(1+\varepsilon|\varepsilon| f(x))(1+\varepsilon|\varepsilon| f(y))\sin^2\left(\frac{x-y}{2}\right)\right)^{\frac{3}{2}}}\\
    		& \ \ \ \ \times \left((f(x)-f(y))(f'(x)-f'(y))+2(f(x)f'(y)+f'(x)f(y))\sin^2(\frac{x-y}{2})\right)\\
    		& \ \ \ \ \times(\partial^{k-1}f(y)-\partial^{k-1}f(x))dy +l.o.t,
    	\end{split}
    \end{equation*}
    from which one can deduce
    \begin{equation*}
    	\begin{split}
    		\left\|\partial^{k-1}G_{24}\right\|_{L^2}&\le C\varepsilon|\varepsilon|\left(\|f'\|_{L^\infty}^2\|\partial^{k-1}f\|_{L^2}+\|f'\|_{L^\infty}\|\partial^kf\|_{L^2}+\|f'\|_{L^\infty}^4\|\partial^kf\|_{L^2}\right)\\
    		&\le C \varepsilon|\varepsilon|\|f\|_{X^k_{\log}}<\infty
    	\end{split}
    \end{equation*}
    As a result, the range of $G_2$ is in $Y^{k-1}$.
     
    To prove the continuity of $G_2$, we will also deal with the most singular term $G_{22}$, and use following notations: For a general function $g$, we let
    $$\Delta g=g(x)-g(y), \ \ \ g=g(x), \ \ \ \tilde g=g(y),$$
    and
    $$D_\alpha(g)=\varepsilon^{2+2\alpha}\Delta g^2+4(1+\varepsilon|\varepsilon|^\alpha g)(1+\varepsilon|\varepsilon|^\alpha\tilde g)\sin^2(\frac{x-y}{2}).$$
    Then for $f_1,f_2\in V^r$, it holds
    \begin{equation*}
    	\begin{split}
    		G_{22}(\varepsilon, f_1)&-G_{22}(\varepsilon, f_2)=\int\!\!\!\!\!\!\!\!\!\; {}-{}\frac{(\Delta f'_1-\Delta f'_2)\cos(x-y)dy}{D_1(f_1)^\frac{1}{2}}\\
    		&+\left(\int\!\!\!\!\!\!\!\!\!\; {}-{}\frac{\Delta f'_2\cos(x-y)dy}{D_1(f_1)^\frac{1}{2}}-\int\!\!\!\!\!\!\!\!\!\; {}-{}\frac{\Delta f'_2\cos(x-y)dy}{D_1(f_2)^\frac{1}{2}}\right)\\
    		&=I_1+I_2.
    	\end{split}
    \end{equation*}
    It is easy to prove that $\|I_1\|_{Y^{k-1}}\le C\|f_1-f_2\|_{X^k_{\text{log}}}$. To consider $I_2$, we apply once mean value theorem to obtain
    \begin{equation}\label{2-7}
    	\begin{split}
    		&\frac{1}{D_\alpha(f_1)^\frac{\alpha}{2}}-\frac{1}{D_\alpha(f_2)^\frac{\alpha}{2}}=\frac{\alpha}{2}\frac{D_\alpha(f_2)^\frac{\alpha}{2}-D^\alpha(f_1)^\frac{\alpha}{2}}{D_\alpha(\delta_{x,y}f_1+(1-\delta_{x,y})f_2)^{1-\frac{\alpha}{2}}D_\alpha(f_1)^\frac{\alpha}{2}D_\alpha(f_2)^\frac{\alpha}{2}}\\
    		&=\frac{\alpha}{2}\frac{|\varepsilon|^{2+2\alpha}(\Delta f_2^2-\Delta f_1^2)+4\varepsilon|\varepsilon|^\alpha((f_2-f_1)(1+\varepsilon|\varepsilon|^\alpha\tilde f_2)+(\tilde f_2-\tilde f_1)(1+\varepsilon|\varepsilon|^\alpha f_1)\sin^2(\frac{x-y}{2})}{D_\alpha(\delta_{x,y}f_1+(1-\delta_{x,y})f_2)^{1-\frac{\alpha}{2}}D_\alpha(f_1)^\frac{\alpha}{2}D_\alpha(f_2)^\frac{\alpha}{2}}
    	\end{split}
    \end{equation}
    for some $\delta_{x,y}\in (0,1)$. Notice that for $g\in X^k_{\log}$. It holds $D_1(g)\sim \sin^2(\frac{x-y}{2})\sim |x-y|^2/4$ as $|x-y|\to 0$. Since $\alpha=1$, one has
    \begin{equation*}
    	\begin{split}
    	\partial^{k-1}I_2\sim C&\int\!\!\!\!\!\!\!\!\!\; {}-{}\frac{\partial^{k-1}f_2(x)-\partial^{k-1}f_2(y)}{|\sin(\frac{x-y}{2})|^\alpha}\times \left(\frac{|\varepsilon|^4(\Delta f_2^2-\Delta f_1^2)}{|x-y|^2}+4\varepsilon|\varepsilon|(f_2-f_1+\tilde f_2-\tilde f_1)\right)dy\\
    	&+l.o.t,
    	\end{split}
    \end{equation*}
    and we deduce that $\|I_2\|_{Y^{k-1}}\le C\|f_1-f_2\|_{X^k_{\log}}$. So we have proven $G_2(\varepsilon, f): \left(-\frac{1}{2}, \frac{1}{2}\right)\times V^r \rightarrow Y^{k-1}$ is continuous. In \eqref{2-6}, we have already collect the main terms of $G_{21}$. For a future use, we also apply Taylor formula \eqref{2-5} on $G_{22}$ and $G_{23}$ to obtain
    \begin{equation}\label{2-8}
    	\begin{split}
    		G_2&=\frac{1}{4}\int\!\!\!\!\!\!\!\!\!\; {}-{} \frac{f(y)\sin(x-y)dy}{\sin(\frac{x-y}{2})}+\frac{1}{2}\int\!\!\!\!\!\!\!\!\!\; {}-{} \frac{(f'(y)-f'(x))\cos(x-y)dy}{\sin(\frac{x-y}{2})}+\varepsilon|\varepsilon|\mathcal{R}_{2}(\varepsilon,f)\\
    		&=\frac{1}{4}\int\!\!\!\!\!\!\!\!\!\; {}-{} \frac{f(x-y)\sin(y)dy}{\sin(\frac{y}{2})}-\frac{1}{2}\int\!\!\!\!\!\!\!\!\!\; {}-{} \frac{(f'(x)-f'(x-y))\cos(y)dy}{\sin(\frac{y}{2})}+\varepsilon|\varepsilon|\mathcal{R}_{2}(\varepsilon,f),
    	\end{split}
    \end{equation}
    where $\mathcal{R}_2(\varepsilon, f): \left(-\frac{1}{2}, \frac{1}{2}\right)\times V^r \rightarrow Y^{k-1}$ is continuous by previous discussion.
    
    Then we move to consider $G_3$. To eliminate the possible singularity at $\varepsilon=0$, we will apply Taylor formula \eqref{2-5} on $G_{31}$ with
    $$A_i=\left((-1+\cos(\frac{2\pi i}{m}))^2 +\sin^2(\frac{2\pi i}{m})\right)d^2,$$
    $$B_i=2d(-1+\cos(\frac{2\pi i}{m}))(\cos(x)-\cos(y+\frac{2\pi i}{m}))+2d\sin(\frac{2\pi i}{m})(\sin(x)-\sin(x+\frac{2\pi i}{m})).$$
    Since $\sin(\cdot)$ is an odd function, from \eqref{2-3} we have 
    \begin{equation*}
    	\begin{split}
    		G_{31}
    		&=\sum_{i=1}^{m-1} \frac{1}{\varepsilon} \int\!\!\!\!\!\!\!\!\!\; {}-{} \frac{\sin(x-y-\frac{2\pi i}{m})dy}{\left( A_i+\varepsilon B_i+O(\varepsilon^2) \right)^{\frac{1}{2}}}+\sum\limits_{i=1}^{m-1}\int\!\!\!\!\!\!\!\!\!\; {}-{} \frac{|\varepsilon|f(y)\sin(x-y-\frac{2\pi i}{m})dy}{\left| \left(\boldsymbol{z}(x)-(d,0)\right)-Q_{\frac{2\pi i}{m}}\left(\boldsymbol{z}(y)-(d,0)\right)\right|}\\
    		&=\sum_{i=1}^{m-1} \frac{1}{\varepsilon} \int\!\!\!\!\!\!\!\!\!\; {}-{} \frac{\sin(x-y-\frac{2\pi i}{m})dy}{ A_i^{\frac{1}{2}}}-\sum_{i=1}^{m-1} \frac{1}{2} \int\!\!\!\!\!\!\!\!\!\; {}-{}\int_0^1 \frac{(B_i+O(\varepsilon))\sin(x-y-\frac{2\pi k}{m})dtdy}{\left( A_i+\varepsilon tB_i+O(\varepsilon^2) \right)^{\frac{3}{2}}}\\
    		& \ \ \ \ +\sum\limits_{i=1}^{m-1}\int\!\!\!\!\!\!\!\!\!\; {}-{} \frac{|\varepsilon|f(y)\sin(x-y-\frac{2\pi i}{m})dy}{\left| \left(\boldsymbol{z}(x)-(d,0)\right)-Q_{\frac{2\pi i}{m}}\left(\boldsymbol{z}(y)-(d,0)\right)\right|}\\
    		&=-\sum_{i=1}^{m-1} \frac{1}{2} \int\!\!\!\!\!\!\!\!\!\; {}-{} \frac{B_i\sin(x-y-\frac{2\pi i}{m})dy}{A_i^{\frac{3}{2}}}+\varepsilon\mathcal{R}_{31}(\varepsilon,f)\\
    		&=-\sum_{i=1}^{m-1} \frac{(-1+\cos(\frac{2\pi i}{m}))\sin (x)}{2\left((-1+\cos(\frac{2\pi i}{m}))^2 +\sin^2(\frac{2\pi i}{m})\right)^{\frac{3}{2}}d^2}+\varepsilon\mathcal{R}_{31}(\varepsilon,f),
    	\end{split}
    \end{equation*}
    where $\mathcal{R}_{31}(\varepsilon,f)$ is not singular with respect to $\varepsilon$.
    
    Notice that for $\boldsymbol{z}\in \partial D^\varepsilon_0$ and $i\ge 1$, $\left| \left(\boldsymbol{z}(x)-(d,0)\right)-Q_{\frac{2\pi i}{m}}\left(\boldsymbol{z}(y)-(d,0)\right)\right|$ has positive lower bounds. So $G_3$ is less singular than $G_2$, and we can also deduce $G_3(\varepsilon, f): \left(-\frac{1}{2}, \frac{1}{2}\right)\times V^r \rightarrow Y^{k-1}$ is continuous. Moreover, it holds
    \begin{equation}\label{2-9}
    	G_3=-\sum_{i=1}^{m-1} \frac{(-1+\cos(\frac{2\pi i}{m}))\sin (x)}{2\left((-1+\cos(\frac{2\pi i}{m}))^2 +\sin^2(\frac{2\pi i}{m})\right)^{\frac{3}{2}}d^2}+\varepsilon\mathcal{R}_{3}(\varepsilon,f),
    \end{equation}
    where $\mathcal{R}_3(\varepsilon, f): \left(-\frac{1}{2}, \frac{1}{2}\right)\times V^r \rightarrow Y^{k-1}$ is continuous. 
    
    The proof is complete by concluding all the facts above.   
\end{proof}

For $(\varepsilon,\Omega,f)\in \left(-\frac{1}{2}, \frac{1}{2}\right)\times \mathbb{R} \times V^r$ and $h\in X^{k}_{\log}$, let
\begin{equation*}
	\partial_fG^1(\varepsilon, \Omega, f)h:=\lim\limits_{t\to0}\frac{1}{t}\left(G^1(\varepsilon, \Omega, f+th)-G^1(\varepsilon, \Omega, f)\right)
\end{equation*}
be the Gateaux derivative of $G^1(\varepsilon, \Omega, f)$. We have following lemma:
\begin{lemma}\label{lem2-3}
	For each $(\varepsilon,\Omega,f)\in \left(-\frac{1}{2}, \frac{1}{2}\right)\times \mathbb{R} \times V^r$, $\partial_fG^1(\varepsilon, \Omega, f)h: X^{k}_{\log}\to Y^{k-1}$ is continuous.
\end{lemma}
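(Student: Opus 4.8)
The plan is to differentiate $G^1=G_1+G_2+G_3$ term by term in the direction $h$, to verify that the difference quotient converges, and then to bound the resulting linear expression in $Y^{k-1}$ by $C\|h\|_{X^k_{\log}}$ with a constant depending on $(\varepsilon,\Omega,f)$ but not on $h$; since the Gateaux derivative is linear in $h$ once the limit exists, this boundedness is exactly the asserted continuity of the operator $h\mapsto\partial_fG^1(\varepsilon,\Omega,f)h$. The existence of the limit will follow from differentiation under the integral sign, which is legitimate because in each integrand the dependence on $f$ is smooth (a rational combination of $f,f'$ inside $G_1$, and $D_1(f)^{-1/2}$ times polynomial factors inside $G_2,G_3$) while the kernel singularities are uniformly integrable. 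The term $G_1$ in \eqref{2-1} is immediate: its derivative is $\Omega\big(|\varepsilon|^3h'-\ldots\big)$, a rational expression in $f,f',h,h'$, which maps $X^k_{\log}$ boundedly into $Y^{k-1}$ by the same algebra and embedding facts used in Lemma \ref{lem2-2}. As before, replacing $y$ by $-y$ and using that $h\in X^k_{\log}$ is even shows that the whole derivative is odd in $x$, so its range lies in $Y^{k-1}$.

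The substance is in $G_2$. Differentiating each singular integral produces terms where $h$ or $h'$ enters the numerator, and terms where it enters through the denominator by the chain rule, each carrying a factor $D_1(f)^{-3/2}$ times the derivative of $D_1(f)$ in the direction $h$. Exactly as in the proof of Lemma \ref{lem2-2}, the most singular contribution comes from $\partial_fG_{22}(\varepsilon,f)h$; after taking $\partial^{k-1}_x$ its leading piece is
\begin{equation*}
	\int\!\!\!\!\!\!\!\!\!\; {}-{} \frac{(\partial^kh(y)-\partial^kh(x))\cos(x-y)}{D_1(f)^{1/2}}\,dy ,
\end{equation*}
obtained from the corresponding leading term for $f$ in Lemma \ref{lem2-2} by replacing $f$ with the direction $h$. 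Since $D_1(f)\sim\sin^2(\frac{x-y}{2})$ near the diagonal, the $L^2$ norm of this piece is controlled by the defining seminorm of $X^k_{\log}$, hence by $\|h\|_{X^k_{\log}}$ through Lemma \ref{lem2-1}. Every remaining piece of $\partial_fG_2(\varepsilon,f)h$ — those involving $\partial^{k-1}h$ against the $D_1(f)^{-3/2}$ kernel, those coming from $G_{23},G_{24}$, and the lower order terms — is strictly less singular and is estimated precisely as the corresponding terms in Lemma \ref{lem2-2}, using $\|h\|_{L^\infty},\|h'\|_{L^\infty},\|h''\|_{L^\infty}\le C\|h\|_{X^k_{\log}}$ (valid since $k\ge3$) together with the finiteness of the analogous norms of $f\in V^r$. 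Each such bound carries at least one factor controlled by $\|h\|_{X^k_{\log}}$, so that $\|\partial_fG_2(\varepsilon,f)h\|_{Y^{k-1}}\le C(\varepsilon,f)\|h\|_{X^k_{\log}}$.

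For $G_3$ the kernels $|(\boldsymbol z(x)-(d,0))-Q_{\frac{2\pi i}{m}}(\boldsymbol z(y)-(d,0))|^{-\alpha}$ stay bounded below for $i\ge1$ and $\boldsymbol z\in\partial D^\varepsilon_0$, so the direction $h$ enters only through smooth, non-singular integrands; the resulting terms are bounded in $Y^{k-1}$ by $C\|h\|_{H^k}\le C\|h\|_{X^k_{\log}}$ with no loss of derivatives. Summing the three estimates yields a bounded linear operator, which is the claim. The main obstacle is the borderline logarithmic singularity of $\partial_fG_{22}$ at $\alpha=1$: here the ordinary $X^k$ norm would fail to control the leading term, and one must exploit that the defining seminorm of $X^k_{\log}$ measures exactly the $L^2$ size of the log-singular integral of $\partial^kh$, so that this term is absorbed with the correct power. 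Once it is handled, all other contributions are less singular and the estimates are routine.
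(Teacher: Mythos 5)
Your overall strategy coincides with the paper's: differentiate $G_1$, $G_2$, $G_3$ term by term, observe that the only delicate contribution is the one inherited from $G_{22}$, whose leading part after applying $\partial_x^{k-1}$ is the mean value of $(\partial^kh(y)-\partial^kh(x))\cos(x-y)\,D_1(f)^{-1/2}$, and absorb the borderline logarithmic singularity using the defining seminorm of $X^k_{\log}$ via Lemma \ref{lem2-1}. The estimates you sketch for the remaining pieces (using $\|h\|_{C^2}\le C\|h\|_{X^k_{\log}}$ for $k\ge3$ and $\|f\|_{X^k_{\log}}<r$, together with the nondegeneracy of the kernels in $G_3$) are the same ones the paper uses, and they do yield $\|\partial_fG^1(\varepsilon,\Omega,f)h\|_{Y^{k-1}}\le C(\varepsilon,f)\|h\|_{X^k_{\log}}$. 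One small caution: besides the term with $\partial^k h$ against $D_1(f)^{-1/2}$, the chain-rule terms place $\partial^k f$ against a kernel of effective strength $|\sin(\frac{x-y}{2})|^{-1}$ multiplied by an $h$-dependent factor; these are linear in $h$ but require $\|f\|_{X^k_{\log}}$, not merely $\|f\|_{C^2}$, which is fine since $f\in V^r$.

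There are two places where you are more casual than the paper, and the second is a genuine omission relative to what the lemma is used for. First, the existence of the Gateaux derivative cannot be dismissed as ``differentiation under the integral sign with uniformly integrable kernels'': the difference quotient must converge in $Y^{k-1}$, i.e.\ after $k-1$ derivatives have been taken, where the kernel is only logarithmically integrable against $\partial^k f$. The paper proves this quantitatively by writing the difference quotient minus the candidate derivative as $F_{21}+F_{22}$ and showing, via the mean value theorem, that the second-order remainder of $D_1(f+th)^{-1/2}$ is of size $t^2|\sin(\frac{x-y}{2})|^{-1}$ times a bounded factor, so the whole error is $O(t)\|f\|_{X^k_{\log}}$ in $Y^{k-1}$; a dominated-convergence argument only controls the integrand pointwise or in $L^1$ and does not give this. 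Second, the paper's proof contains an explicit ``second step'', based on \eqref{2-7}, establishing that $\partial_fG_2(\varepsilon,f)h$ depends continuously on the base point $(\varepsilon,f)$; this is what makes $G^1$ continuously differentiable and is the property the implicit function theorem actually consumes in the proof of Theorem \ref{thm1}. Your proposal establishes only the boundedness of the linear map $h\mapsto\partial_fG^1(\varepsilon,\Omega,f)h$ at a fixed point, which matches the literal wording of the statement but not the content of the paper's proof. The missing continuity follows from the same comparison used for $\|G_{22}(\varepsilon,f_1)-G_{22}(\varepsilon,f_2)\|_{Y^{k-1}}$ in Lemma \ref{lem2-2}, now applied to the kernels $D_1(f_1)^{-1/2}-D_1(f_2)^{-1/2}$ and $D_1(f_1)^{-3/2}-D_1(f_2)^{-3/2}$ appearing in $F_1,\dots,F_4$, and should be included.
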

\begin{proof}
	From \eqref{2-1}, it is easy to see that
	\begin{equation}\label{2-10}
	\partial_f G_1(\varepsilon,\Omega, f)h=\Omega\varepsilon|\varepsilon|\partial_f\mathcal{R}_1(\varepsilon,f)h
	\end{equation}
	is continuous, where $\mathcal{R}_1(\varepsilon,f)$ is given in \eqref{2-4}.
	
	Next, we claim $\partial_f G_2(\varepsilon, f)h=F_1+F_2+F_3+F_4$ is continuous, where
	\begin{equation}\label{2-11}
		\begin{split}		
			&F_1=\int\!\!\!\!\!\!\!\!\!\; {}-{} \frac{h(y)\sin(x-y)dy}{\left( |\varepsilon|^4\left(f(x)-f(y)\right)^2+4(1+\varepsilon|\varepsilon| f(x))(1+\varepsilon|\varepsilon| f(y))\sin^2\left(\frac{x-y}{2}\right)\right)^{\frac{1}{2}}}\\
			& \ \ \ \ -\frac{1}{2}\int\!\!\!\!\!\!\!\!\!\; {}-{} \frac{(1+\varepsilon|\varepsilon|f(y))\sin(x-y)}{\left( |\varepsilon|^4\left(f(x)-f(y)\right)^2+4(1+\varepsilon|\varepsilon| f(x))(1+\varepsilon|\varepsilon| f(y))\sin^2\left(\frac{x-y}{2}\right)\right)^{\frac{3}{2}}}\\
			& \ \ \ \ \times\bigg(\varepsilon|\varepsilon|(2(f(x)-f(y))(h(x)-h(y))\\
			& \ \ \ \ \ \ \ \ +4(h(x)(1+\varepsilon|\varepsilon|f(y))+h(y)(1+\varepsilon|\varepsilon|f(x)))\sin^2(\frac{x-y}{2})\bigg)dy
		\end{split}
	\end{equation}
    \begin{equation}\label{2-12}
    	\begin{split}
    		F_2&=\int\!\!\!\!\!\!\!\!\!\; {}-{} \frac{(h'(y)-h'(x))\cos(x-y)dy}{\left( |\varepsilon|^4\left(f(x)-f(y)\right)^2+4(1+\varepsilon|\varepsilon| f(x))(1+\varepsilon|\varepsilon| f(y))\sin^2\left(\frac{x-y}{2}\right)\right)^{\frac{1}{2}}}\\
    		& \ \ \ \ -\frac{\varepsilon|\varepsilon|}{2}\int\!\!\!\!\!\!\!\!\!\; {}-{} \frac{(f'(y)-f'(x))\cos(x-y)}{\left( |\varepsilon|^4\left(f(x)-f(y)\right)^2+4(1+\varepsilon|\varepsilon| f(x))(1+\varepsilon|\varepsilon| f(y))\sin^2\left(\frac{x-y}{2}\right)\right)^{\frac{3}{2}}}\\
    		& \ \ \ \ \times\bigg(\varepsilon|\varepsilon|(2(f(x)-f(y))(h(x)-h(y))\\
    		& \ \ \ \ \ \ \ \ +4(h(x)(1+\varepsilon|\varepsilon|f(y))+h(y)(1+\varepsilon|\varepsilon|f(x)))\sin^2(\frac{x-y}{2})\bigg)dy
    	\end{split}
    \end{equation}
    \begin{equation}\label{2-13}
    	\begin{split}
    		F_3&=\frac{\varepsilon|\varepsilon|h'(x)}{1+\varepsilon|\varepsilon|f(x)}\int\!\!\!\!\!\!\!\!\!\; {}-{} \frac{(f(x)-f(y))\cos(x-y)dy}{\left( |\varepsilon|^4\left(f(x)-f(y)\right)^2+4(1+\varepsilon|\varepsilon| f(x))(1+\varepsilon|\varepsilon| f(y))\sin^2\left(\frac{x-y}{2}\right)\right)^{\frac{1}{2}}}\\
    		& \ \ \ \ -\frac{\varepsilon|\varepsilon|f'(x)h(x)}{(1+\varepsilon|\varepsilon|f(x))^2}\int\!\!\!\!\!\!\!\!\!\; {}-{} \frac{(f(x)-f(y))\cos(x-y)dy}{\left( |\varepsilon|^4\left(f(x)-f(y)\right)^2+4(1+\varepsilon|\varepsilon| f(x))(1+\varepsilon|\varepsilon| f(y))\sin^2\left(\frac{x-y}{2}\right)\right)^{\frac{1}{2}}}\\
    		& \ \ \ \ +\frac{\varepsilon|\varepsilon|f'(x)}{1+\varepsilon|\varepsilon|f(x)}\int\!\!\!\!\!\!\!\!\!\; {}-{} \frac{(h(x)-h(y))\cos(x-y)dy}{\left( |\varepsilon|^4\left(f(x)-f(y)\right)^2+4(1+\varepsilon|\varepsilon| f(x))(1+\varepsilon|\varepsilon| f(y))\sin^2\left(\frac{x-y}{2}\right)\right)^{\frac{1}{2}}}\\
    		& \ \ \ \ -\frac{\varepsilon|\varepsilon|f'(x)}{1+\varepsilon|\varepsilon|f(x)}\int\!\!\!\!\!\!\!\!\!\; {}-{} \frac{(f(x)-f(y))\cos(x-y)}{\left( |\varepsilon|^4\left(f(x)-f(y)\right)^2+4(1+\varepsilon|\varepsilon| f(x))(1+\varepsilon|\varepsilon| f(y))\sin^2\left(\frac{x-y}{2}\right)\right)^{\frac{3}{2}}}\\
    		& \ \ \ \ \times\bigg(\varepsilon|\varepsilon|(2(f(x)-f(y))(h(x)-h(y))\\
    		& \ \ \ \ \ \ \ \ +4(h(x)(1+\varepsilon|\varepsilon|f(y))+h(y)(1+\varepsilon|\varepsilon|f(x)))\sin^2(\frac{x-y}{2})\bigg)dy		
    	\end{split}
    \end{equation}
    \begin{equation}\label{2-14}
    	\begin{split}
    		&F_4=\frac{|\varepsilon|^4}{1+\varepsilon|\varepsilon|f(x)}\int\!\!\!\!\!\!\!\!\!\; {}-{} \frac{h'(y)f'(x)\sin(x-y)dy}{\left( |\varepsilon|^4\left(f(x)-f(y)\right)^2+4(1+\varepsilon|\varepsilon| f(x))(1+\varepsilon|\varepsilon| f(y))\sin^2\left(\frac{x-y}{2}\right)\right)^{\frac{1}{2}}}\\
    		& \ \ \ \ -\frac{|\varepsilon|^4}{2(1+\varepsilon|\varepsilon|f(x))}\int\!\!\!\!\!\!\!\!\!\; {}-{} \frac{f'(x)f'(y)\sin(x-y)}{\left( |\varepsilon|^4\left(f(x)-f(y)\right)^2+4(1+\varepsilon|\varepsilon| f(x))(1+\varepsilon|\varepsilon| f(y))\sin^2\left(\frac{x-y}{2}\right)\right)^{\frac{3}{2}}}\\
    		& \ \ \ \ \times\bigg(\varepsilon|\varepsilon|(2(f(x)-f(y))(h(x)-h(y))\\
    		& \ \ \ \ \ \ \ \ +4(h(x)(1+\varepsilon|\varepsilon|f(y))+h(y)(1+\varepsilon|\varepsilon|f(x)))\sin^2(\frac{x-y}{2})\bigg)dy	
    	\end{split}
    \end{equation}

    To this aim, the first step is to show
    $$\lim\limits_{t\to0}\left\|\frac{G_{2i}(\varepsilon, f+th)-G_{2i}(\varepsilon, f)}{t}-F_i(\varepsilon, f,h)\right\|_{Y^{k-1}}\to 0$$
	for $i=1,2,3,4$. For simplicity, we only consider the most singular case $i=2$ and use the notations given in Lemma \ref{2-2}. It holds
	\begin{equation*}
		\begin{split}
			&\frac{G_{22}(\varepsilon, f+th)-G_{22}(\varepsilon, f)}{t}-F_i(\varepsilon, f,h)\\
			&=\frac{1}{t}\int\!\!\!\!\!\!\!\!\!\; {}-{}(f'(x)-f'(y))\cos(x-y)\bigg(\frac{1}{D_1(f+th)}-\frac{1}{D_1(f)}+t\frac{\Delta f\Delta h+2(\tilde fh+h\tilde f)\sin^2(\frac{x-y}{2})}{D_1(f)}\bigg)dy\\
			& \ \ \ \ +\int\!\!\!\!\!\!\!\!\!\; {}-{}(h'(x)-h'(y))\cos(x-y)\bigg(\frac{1}{D_1(f+th)}-\frac{1}{D_1(f)}\bigg)dy\\
			&=F_{21}+F_{22}
		\end{split}
	\end{equation*}
    By taking $\partial^{k-1}$ derivatives of $F_{21}$, we find
    \begin{equation*}
    	\begin{split}
    	\partial^{k-1}F_{21}&=\frac{1}{t}\int\!\!\!\!\!\!\!\!\!\; {}-{}\bigg(\frac{1}{D_1(f+th)}-\frac{1}{D_1(f)}+t\frac{\Delta f\Delta h+2(\tilde fh+h\tilde f)\sin^2(\frac{x-y}{2})}{D_1(f)}\bigg)\\
    	& \ \ \ \ \times(\partial^kf(x)-\partial^kf(y))\cos(x-y)dy+l.o.t.
    	\end{split}
    \end{equation*}
    Using mean value theorem, we derive
    \begin{equation*}
    	\frac{1}{D_1(f+th)}-\frac{1}{D_1(f)}+t\frac{\Delta f\Delta h+2(\tilde fh+h\tilde f)\sin^2(\frac{x-y}{2})}{D_1(f)}\sim \frac{Ct^2}{|\sin(\frac{x-y}{2})|}\zeta(\varepsilon,f,h)
    \end{equation*}
    where $\|\zeta(\varepsilon,f,h)\|_{L^\infty}<\infty$. It follows that 
    \begin{equation*}
    	\|F_{21}\|_{Y^{k-1}}\le Ct\left\|\int\!\!\!\!\!\!\!\!\!\; {}-{}\frac{\partial^kf(x)-\partial^kf(y)}{|\sin(\frac{x-y}{2})|}dy\right\|_{L^2}\le Ct\|f\|_{X^k_{\log}}.
    \end{equation*}
    By \eqref{2-7}, we can also prove $\|F_{22}\|_{Y^{k-1}}\le Ct\|f\|_{X^k_{\log}}$. So the first step is finished by letting $t\to 0$. The second step is to prove the continuity of $\partial_f G_2(\varepsilon, f)h$, which relys on \eqref{2-7}. Since there is no other new idea than the proof of continuity for $G^1(\varepsilon, \Omega, f)$, we omit it. 
    
    Using a similar method as above, we deduce that
    \begin{equation}\label{2-15}
    	\partial_f G_3(\varepsilon, f)h=|\varepsilon|\partial_f\mathcal{R}_3(\varepsilon,f)h
    \end{equation}
    is continuous, where $\mathcal{R}_3(\varepsilon,f)$ is the same term as in \eqref{2-9}. This completes the proof of Lemma \ref{lem2-3}.
\end{proof}

From \eqref{2-10}-\eqref{2-15}, by letting $\varepsilon=0$ and $f\equiv 0$, one has
\begin{equation*}
	\partial_fG^1(0,\Omega,0)h=\frac{1}{2}\int\!\!\!\!\!\!\!\!\!\; {}-{} \frac{h(x-y)\sin(y)dy}{\left(4\sin^2(\frac{y}{2})\right)^{\frac{1}{2}}}-\int\!\!\!\!\!\!\!\!\!\; {}-{} \frac{(h'(x)-h'(x-y))\cos(y)dy}{\left(4\sin^2(\frac{y}{2})\right)^{\frac{1}{2}}}
\end{equation*}
The following lemma claims that the linearization of $G^1(\varepsilon,\Omega,f)$ at $(0,\Omega,0)$ is an isomorphism, which is crutial for our purpose to apply implicit function theorem.
\begin{lemma}\label{lem2-4}
	Let $h=\sum\limits_{j=2}^\infty a_j\cos(jx)$ be in $X^k_{\log}$. Then it holds
	\begin{equation*}
		\partial_fG^1(0,\Omega,0)h=\sum\limits_{j=2}^\infty a_j \gamma_j j\sin(jx) 
	\end{equation*}
	with
	\begin{equation*}
		\gamma_j=\frac{2}{\pi}\sum\limits_{i=1}^j\frac{1}{2i-1}.
	\end{equation*}
    Moreover, for each $\Omega\in\mathbb R$, $\partial_fG^1(0,\Omega,0)h:X^k_{\log}\to Y_0^{k-1}$ is an isomorphism.
\end{lemma}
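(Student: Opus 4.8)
The plan is to diagonalize $\partial_f G^1(0,\Omega,0)$ in the Fourier basis. The derivative displayed just before the statement carries no $\Omega$ (the $\Omega$-dependent piece $\partial_f G_1$ vanishes at $\varepsilon=0$), which is why the conclusion will hold for \emph{every} $\Omega$; and it is a bounded linear map by Lemma~\ref{lem2-3}, so it is determined by its action on the spanning family $\{\cos(jx)\}_{j\ge2}$. First I would substitute $h(x)=\cos(jx)$, $h'(x)=-j\sin(jx)$ into
$$\partial_fG^1(0,\Omega,0)h=\frac14\,\frac{1}{2\pi}\int_0^{2\pi}\frac{h(x-y)\sin y}{\sin(\frac y2)}\,dy-\frac12\,\frac{1}{2\pi}\int_0^{2\pi}\frac{(h'(x)-h'(x-y))\cos y}{\sin(\frac y2)}\,dy,$$
and expand $\cos(j(x-y))$ and $\sin(j(x-y))$ by the addition formulas. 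The key structural point is a parity cancellation: every term carrying the factor $\cos(jx)$ multiplies an integrand that is odd in $y$ (a product of $\cos(jy)$ or $\sin(jy)$ against $\sin y$ or $\cos y$, times the even weight $1/\sin(\frac y2)$), hence integrates to zero over a period. Only the $\sin(jx)$ parts survive, so $\cos(jx)\mapsto\lambda_j\sin(jx)$, the operator is diagonal, and
$$\lambda_j=\frac14 P_j+\frac j2 Q_j,\quad P_j=\frac{1}{2\pi}\int_0^{2\pi}\frac{\sin(jy)\sin y}{\sin(\frac y2)}\,dy,\quad Q_j=\frac{1}{2\pi}\int_0^{2\pi}\frac{(1-\cos(jy))\cos y}{\sin(\frac y2)}\,dy.$$

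The heart of the computation is evaluating $P_j$ and $Q_j$. I would reduce both to the single family $J(n):=\int_0^{2\pi}\frac{1-\cos(ny)}{\sin(\frac y2)}\,dy$: the product-to-sum identities $\sin(jy)\sin y=\tfrac12(\cos(j-1)y-\cos(j+1)y)$ and $\cos(jy)\cos y=\tfrac12(\cos(j-1)y+\cos(j+1)y)$ turn $P_j$ and $Q_j$ into combinations of $J$ at consecutive indices. The family $J(n)$ is then computed by a telescoping recursion: from $\cos((n-1)y)-\cos(ny)=2\sin(\frac{(2n-1)y}{2})\sin(\frac y2)$ one gets $J(n)-J(n-1)=\int_0^{2\pi}2\sin(\frac{(2n-1)y}{2})\,dy=\frac{8}{2n-1}$, so with $J(0)=0$ one obtains $J(n)=8\sum_{i=1}^n\frac1{2i-1}$; this is the estimate underlying Lemma~\ref{A-1} and accounts for the logarithmic growth appearing there. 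Substituting back and collecting terms yields the claimed closed form $\lambda_j=\gamma_j\,j$. The one delicate point is the bookkeeping of the $\frac{1}{2j\pm1}$ remainders produced by the shifted sums, which must be recombined with the prefactor $\tfrac{2j+1}{2\pi}$ into a clean multiple of $\gamma_j$.

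For the isomorphism, the decisive feature is that $\gamma_j>0$ for every $j\ge2$ and that $\gamma_j=\frac2\pi\sum_{i=1}^j\frac1{2i-1}\sim\frac1\pi\ln j$, hence $\gamma_j\asymp 1+\ln j$ with constants uniform in $j$. Then for $h=\sum_{j\ge2}a_j\cos(jx)$, Plancherel together with Lemma~\ref{lem2-1} gives
$$\big\|\partial_fG^1(0,\Omega,0)h\big\|_{H^{k-1}}^2\asymp\sum_{j\ge2}|a_j|^2\gamma_j^2 j^{2k}\asymp\sum_{j\ge2}|a_j|^2(1+\ln j)^2 j^{2k}\asymp\|h\|_{X^k_{\log}}^2,$$
so the map is bounded above and below from $X^k_{\log}$ into $Y^{k-1}_0$. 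Injectivity is immediate since no $\lambda_j$ vanishes, and surjectivity follows by inverting mode-by-mode: for $g=\sum_{j\ge2}b_j\sin(jx)\in Y^{k-1}_0$ the preimage $a_j=b_j/\lambda_j$ lies in $X^k_{\log}$ by the same norm equivalence. I expect the main obstacle to be the exact evaluation and recombination of the shifted harmonic sums defining $\lambda_j$, rather than the functional-analytic part: it is precisely the cancellation producing the $\ln j$ asymptotics of $\gamma_j$, matched to the logarithmic weight built into $X^k_{\log}$, that makes $Y^{k-1}_0$ (one derivative lower) the correct codomain and turns the map into an isomorphism.
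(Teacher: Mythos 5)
Your proposal is correct and follows the same architecture as the paper: diagonalize $\partial_fG^1(0,\Omega,0)$ in the Fourier basis, kill the $\cos(jx)$ components by parity in $y$, compute the multiplier $\lambda_j$ explicitly, and invert mode by mode using $\gamma_j\asymp 1+\ln j$ against the logarithmic weight in $X^k_{\log}$. The only genuine difference is how the multiplier is evaluated: you reduce everything to $J(n)=\int_0^{2\pi}\frac{1-\cos(ny)}{\sin(y/2)}\,dy$ and get $J(n)=8\sum_{i=1}^n\frac{1}{2i-1}$ by the telescoping identity $\cos((n-1)y)-\cos(ny)=2\sin(\tfrac{(2n-1)y}{2})\sin(\tfrac y2)$, whereas the paper splits $\cos y=1-2\sin^2(\tfrac y2)$, integrates by parts, and invokes the Gamma-function identity \eqref{2-16} together with Lemma \ref{A-1}. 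Your route is more elementary and self-contained for $\alpha=1$; the paper's pays the Gamma-function overhead once and reuses the same computation verbatim for all $1<\alpha<2$ in Lemma \ref{lem2-9}. One substantive remark on the bookkeeping you rightly flagged as the delicate point: carrying it out, the $-J(1)$ term (equivalently the $\fint(\sin(jx)-\sin(jx-jy))|\sin(\tfrac y2)|\,dy$ piece in the paper's splitting) contributes $-\tfrac{2j}{\pi}\sin(jx)$, which does \emph{not} cancel against the $\tfrac{1}{2j\pm1}$ remainders; those remainders cancel among themselves. The correct multiplier is therefore $\lambda_j=\tfrac{2j}{\pi}\bigl(\sum_{i=1}^j\tfrac{1}{2i-1}-1\bigr)=\tfrac{2j}{\pi}\sum_{i=2}^j\tfrac{1}{2i-1}$, which agrees with the paper's own intermediate display (and with the coefficients $\bar a_j,\bar b_j$ in Lemma \ref{lem4-1}) but not with the final formula for $\gamma_j$ stated in the lemma, where the $-1$ has been dropped. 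Since $\sum_{i=2}^j\tfrac{1}{2i-1}>0$ for $j\ge2$ and still grows like $\ln j$, the positivity, monotonicity, and asymptotics underlying the isomorphism are unaffected, so your functional-analytic conclusion stands; just do not expect the sums to recombine into the displayed $\gamma_j$ exactly as written.
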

\begin{proof}
	We first calculate the formulation of $\partial_fG^1(0,\Omega,0)h$. For the future use, we will deal with the general case $1\le\alpha<2$, namely, the formula
	\begin{equation*}
		C_\alpha(1-\frac{\alpha}{2})\int\!\!\!\!\!\!\!\!\!\; {}-{} \frac{h(x-y)\sin(y)dy}{\left(4\sin^2(\frac{y}{2})\right)^{\frac{\alpha}{2}}}-C_\alpha\int\!\!\!\!\!\!\!\!\!\; {}-{} \frac{(h'(x)-h'(x-y))\cos(y)dy}{\left(4\sin^2(\frac{y}{2})\right)^{\frac{\alpha}{2}}}.
	\end{equation*}
	Since $h(x)$ can be written as $\sum\limits_{j=2}^\infty a_j\cos(jx)$, we will calculate that $\partial_fG^1(0,\Omega,0)$ acts on $a_j\cos(jx)$ for each $j\ge 2$. Using identities
    $$\cos(\frac{y}{2})\left|\sin(\frac{y}{2})\right|^{1-\alpha}=\frac{2}{2-\alpha}\partial_y\left(\left|\sin(\frac{y}{2})\right|^{1-\alpha}\right),$$
    \begin{equation}\label{2-16}
    \int_0^\pi(\sin(y))^{2-\alpha}e^{jy\text i}dy=\frac{\pi e^{j\pi\text i}\Gamma(3-\alpha)}{2^{1-\alpha}\Gamma(2+j-\frac{\alpha}{2})\Gamma(2-j-\frac{\alpha}{2})}, \ \ \ \ \forall \, \alpha<3, \ \ \forall \, j\in\mathbb{R},
    \end{equation}
	and integrating by parts, for the first term we derive
	\begin{equation}\label{2-17}
		\begin{split}
			&C_\alpha(1-\frac{\alpha}{2})\int\!\!\!\!\!\!\!\!\!\; {}-{} \frac{a_j\cos(jx-jy)\sin(y)dy}{\left(4\sin^2(\frac{y}{2})\right)^{\frac{\alpha}{2}}}\\
			&=2^{1-\alpha}C_\alpha(1-\frac{\alpha}{2})\int\!\!\!\!\!\!\!\!\!\; {}-{}a_j\cos(jx-jy)\cos(\frac{y}{2})|\sin(\frac{y}{2})|^{1-\alpha}dy\\
			&=2^{1-\alpha}C_\alpha(1-\frac{\alpha}{2})\frac{-2j}{2-\alpha}a_j\int\!\!\!\!\!\!\!\!\!\; {}-{} \sin(jx-jy)|\sin(\frac{y}{2})|^{2-\alpha}dy\\
			&=2^{1-\alpha}C_\alpha(1-\frac{\alpha}{2})\frac{-2j}{2-\alpha}a_j\sin(jx)\int\!\!\!\!\!\!\!\!\!\; {}-{} \cos(jy)|\sin(\frac{y}{2})|^{2-\alpha}dy\\
			&=2^{1-\alpha}C_\alpha(1-\frac{\alpha}{2})\frac{-2j}{2-\alpha}a_j\frac{\pi \cos(j\pi)\Gamma(3-\alpha)}{2^{1-\alpha}\Gamma(2+j-\frac{\alpha}{2})\Gamma(2-j-\frac{\alpha}{2})}\sin(jx)
		\end{split}
	\end{equation}
    When $\alpha=1$, since $\Gamma(x)\Gamma(1-x)=\pi/\sin(\pi x)$, \eqref{2-17} is equal to $$\frac{1}{\pi}j a_j\frac{2}{4j^2-1}\sin(jx).$$
    
    For the second term, it holds
    \begin{equation*}
    	\begin{split}
    		&C_\alpha j a_j\int\!\!\!\!\!\!\!\!\!\; {}-{} \frac{(\sin(jx)-\sin(jx-jy))\cos(y)dy}{\left(4\sin^2(\frac{y}{2})\right)^{\frac{\alpha}{2}}}\\
    		&=2^{-\alpha}C_\alpha j a_j\int\!\!\!\!\!\!\!\!\!\; {}-{} (\sin(jx)-\sin(jx-jy))\left|\sin(\frac{y}{2})\right|^{-\alpha}dy\\
    		& \ \ \ \ -2^{1-\alpha}C_\alpha j a_j\int\!\!\!\!\!\!\!\!\!\; {}-{} (\sin(jx)-\sin(jx-jy))\left|\sin(\frac{y}{2})\right|^{2-\alpha}dy.
    	\end{split}
    \end{equation*}
    According to Lemma \ref{A-1} in the Appendix and identity \eqref{2-16}, the above equality can be rewritten as
    \begin{equation}\label{2-18}
    	\begin{split}
    		& \ \ \ \ C_\alpha j a_j\frac{2\pi \Gamma(1-\alpha)}{\Gamma(\frac{\alpha}{2})\Gamma(1-\frac{\alpha}{2})}\left(\frac{\Gamma(\frac{\alpha}{2})}{\Gamma(1-\frac{\alpha}{2})}-\frac{\Gamma(j+\frac{\alpha}{2})}{\Gamma(1+j-\frac{\alpha}{2})}\right)\sin(jx)\\
    		&-2^{-1}C_\alpha j a_j\frac{2\pi \Gamma(3-\alpha)}{\Gamma(\frac{\alpha}{2}-1)\Gamma(2-\frac{\alpha}{2})}\left(\frac{\Gamma(\frac{\alpha}{2}-1)}{\Gamma(2-\frac{\alpha}{2})}-\frac{\Gamma(j-1+\frac{\alpha}{2})}{\Gamma(2+j-\frac{\alpha}{2})}\right)\sin(jx)
    	\end{split}
    \end{equation} 
    When $\alpha=1$, it is equal to
    $$\frac{1}{\pi}j a_j\sum\limits_{i=1}^{j}\frac{2}{2j-1}\sin(jx)-\frac{1}{\pi}j a_j\left(2+\frac{2}{4j^2-1}\right)\sin(jx).$$
    Hence we derive
    \begin{equation*}
        \partial_fG^1(0,\Omega,0)h=\sum\limits_{j=2}^\infty a_j \gamma_j j\sin(jx), \ \ \ \ \ \gamma_j=\frac{2}{\pi}\sum\limits_{i=1}^j\frac{1}{2i-1}.
    \end{equation*}
   
   Then we have to verify that $\partial_fG^1(0,\Omega,0):X^k_{\log}\to Y_0^{k-1}$ is an isomorphism. By Lemma \ref{A-1}, $\{\gamma_j\}$ is a monotone sequence with posive lower bound. So the kernel of $\partial_fG^1(0,\Omega,0)$ is trivial. On the  other hand, we need to show that for each $p(x)\in Y_0^{k-1}$ with the form $p(x)=\sum\limits_{j=2}^\infty b_j\sin(jx)$, there exists an $h(x) \in X^k_{\log}$ such that
   $\partial_fG^1(0,\Omega,0)h=p$. From above calculation, we see that this $h$ can be given directly by
   \begin{equation*}
   	   h(x)=\sum\limits_{j=2}^\infty b_j\gamma_j^{-1}j^{-1}\cos(jx).
   \end{equation*}
   Since it holds $\gamma_j=O(\ln j)$. By Lemma \ref{A-1}, we have
   $$\|h\|^2_{X^k_{\log}}=\sum\limits_{j=2}^\infty b_j^2\gamma_j^{-2}j^{2k-2}(1+\ln(j))^2\le C\sum\limits_{j=2}^\infty b_j^2j^{2k-2}\left(\frac{1+\ln(j)}{\ln(j)}\right)\le C\|p\|^2_{Y_0^{k-1}}.$$
   As a result, we deduce $h(x) \in X^k_{\text{log}}$ and complete the proof.
\end{proof}

According to \eqref{2-4} \eqref{2-8} \eqref{2-9}, if we let
\begin{equation}\label{2-19}
	\Omega_1^*:=\sum_{i=1}^{m-1} \frac{(-1+\cos(\frac{2\pi i}{m}))}{2\left((-1+\cos(\frac{2\pi i}{m}))^2 +\sin^2(\frac{2\pi i}{m})\right)^{\frac{3}{2}}d^3}
\end{equation}
then it holds $G^1(0,\Omega_1^*,0)=0$. In the next lemma, we are going to adjust the value of angular velocity $\Omega=\Omega_1(\varepsilon,f)$, so that the range of $G^1(\varepsilon,\Omega_1(\varepsilon,f),f)$ is in $Y_0^{k-1}$.
\begin{lemma}\label{lem2-5}
	There exists
	\begin{equation*}
		\Omega_1(\varepsilon, f):=\Omega_1^*+\varepsilon\mathcal{R}_\Omega(\varepsilon,f)
	\end{equation*}
    with $\Omega_1^*$ given in \eqref{2-19} and continuous function $\mathcal{R}_\Omega(\varepsilon,f):X^k_{\log}\to \mathbb{R}$, such that $\tilde G^1(\varepsilon,f):(-\frac{1}{2},\frac{1}{2})\times V^r\to Y_0^{k-1}$ is given by
    \begin{equation*}
    	\tilde G^1(\varepsilon,f):=G^1(\varepsilon,\Omega_1(\varepsilon,f), f).
    \end{equation*}
    Moreover, $\partial_f\mathcal{R}_\Omega(\varepsilon,f)h:X^k_{\log}\to \mathbb{R}$ is continuous.
\end{lemma}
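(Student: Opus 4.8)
The plan is to exploit the fact that $G^1(\varepsilon,\Omega,f)$ is \emph{affine} in $\Omega$, so that the condition defining $\tilde G^1$ can be solved for $\Omega$ explicitly rather than through a further fixed-point argument. Indeed, by \eqref{2-4} we have $G_1=\Omega\big(d\sin(x)+\varepsilon|\varepsilon|\mathcal R_1(\varepsilon,f)\big)$, while $G_2$ and $G_3$ do not depend on $\Omega$. Recall from Lemma~\ref{lem2-2} that $G^1(\varepsilon,\Omega,f)\in Y^{k-1}$, so it is already a sine series $\sum_{j\ge1}b_j\sin(jx)$; hence membership in $Y_0^{k-1}$ is equivalent to the vanishing of the single coefficient $b_1$. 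Let $P_1:Y^{k-1}\to\mathbb R$ denote the bounded linear functional extracting this first coefficient, namely $P_1(g)=\tfrac1\pi\int_0^{2\pi}g(x)\sin(x)\,dx$. Then the requirement $\tilde G^1(\varepsilon,f)\in Y_0^{k-1}$ is precisely $P_1\big(G^1(\varepsilon,\Omega,f)\big)=0$.

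First I would compute $P_1(G^1)$ using \eqref{2-4}, \eqref{2-8} and \eqref{2-9}. The crucial observation is that the leading ($\varepsilon$-independent) part of $G_2$ in \eqref{2-8} coincides with the linear operator $\partial_fG^1(0,\Omega,0)$ applied to $f$, which by Lemma~\ref{lem2-4} sends $X^k_{\log}$ into the closed span of $\{\sin(jx)\}_{j\ge2}$; since $f\in V^r$ carries no first mode, this part contributes nothing to $P_1$. Consequently, writing $c_i(\varepsilon,f):=P_1\big(\mathcal R_i(\varepsilon,f)\big)$ for $i=1,2,3$, and using that the leading term of $G_3$ in \eqref{2-9} equals $-\Omega_1^*d\,\sin(x)$ (this is exactly where the relation between \eqref{2-19} and the $d^{-2}$ factor in \eqref{2-9} is used), I obtain
\begin{equation*}
P_1\big(G^1(\varepsilon,\Omega,f)\big)=\Omega\big(d+\varepsilon|\varepsilon|c_1(\varepsilon,f)\big)-\Omega_1^*d+\varepsilon|\varepsilon|c_2(\varepsilon,f)+\varepsilon c_3(\varepsilon,f).
\end{equation*}
Each $c_i$ is continuous on $(-\tfrac12,\tfrac12)\times V^r$, being the composition of the bounded functional $P_1$ with the continuous maps $\mathcal R_i$ furnished by Lemma~\ref{lem2-2}.

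Since this expression is affine in $\Omega$ with leading coefficient $d+\varepsilon|\varepsilon|c_1$, and since $d>1$ while $c_1$ is bounded on $V^r$, this coefficient stays bounded away from zero for $|\varepsilon|$ small; I would therefore solve $P_1(G^1)=0$ directly, setting
\begin{equation*}
\Omega_1(\varepsilon,f):=\frac{\Omega_1^*d-\varepsilon|\varepsilon|c_2(\varepsilon,f)-\varepsilon c_3(\varepsilon,f)}{d+\varepsilon|\varepsilon|c_1(\varepsilon,f)}.
\end{equation*}
A direct simplification then yields $\Omega_1(\varepsilon,f)=\Omega_1^*+\varepsilon\mathcal R_\Omega(\varepsilon,f)$ with
\begin{equation*}
\mathcal R_\Omega(\varepsilon,f)=-\frac{|\varepsilon|c_2(\varepsilon,f)+c_3(\varepsilon,f)+\Omega_1^*|\varepsilon|c_1(\varepsilon,f)}{d+\varepsilon|\varepsilon|c_1(\varepsilon,f)},
\end{equation*}
which is continuous because numerator and denominator are continuous and the denominator is bounded away from zero.

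Finally, for the continuity of $\partial_f\mathcal R_\Omega(\varepsilon,f)h$ I would differentiate the explicit quotient above. By Lemma~\ref{lem2-3} each $\mathcal R_i$ has a continuous Gateaux derivative in $f$, so $\partial_f c_i\,h=P_1\big(\partial_f\mathcal R_i\,h\big)$ is continuous; the quotient and product rules then express $\partial_f\mathcal R_\Omega\,h$ as a continuous combination of the $c_i$, the $\partial_fc_i\,h$, and powers of the nonvanishing denominator, whence continuity follows. The only genuine point requiring care is the bookkeeping of the $\sin(x)$-coefficient in the second step—in particular the vanishing of the contribution of the leading part of $G_2$, which rests entirely on Lemma~\ref{lem2-4}; every remaining step is an explicit algebraic solve made legitimate by the lower bound $d>1$.
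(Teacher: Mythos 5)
Your proposal is correct and follows essentially the same route as the paper: both reduce the condition $\tilde G^1(\varepsilon,f)\in Y_0^{k-1}$ to the vanishing of the first Fourier sine coefficient, observe via Lemma~\ref{lem2-4} that the leading part of $G_2$ contributes nothing to that coefficient because $f$ carries only modes $j\ge2$, and then solve the resulting equation, affine in $\Omega$ with leading coefficient $d+\varepsilon|\varepsilon|\tilde{\mathcal R}_1$, explicitly for $\Omega_1(\varepsilon,f)$. Your version merely makes explicit the quotient formula and the nonvanishing of the denominator, which the paper leaves as ``direct calculation.''
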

\begin{proof}
	It suffices to find $\Omega_1(\varepsilon, f):(-\frac{1}{2},\frac{1}{2})\times V^r\to\mathbb{R}$, such that the first Fourier coefficient vanishes in $G^1(\varepsilon,\Omega_1(\varepsilon,f), f)$. Notice that from \eqref{2-8} and the proof of Lemma \ref{lem2-4}, the contribution of $G_2$ to the first Fourier coefficient is $\varepsilon|\varepsilon|\tilde{\mathcal{R}}_2(\varepsilon, f)$, with $\tilde{\mathcal{R}}_2$ the contribution of $\mathcal{R}_2$ in \eqref{2-8}. If we combine this fact with \eqref{2-4} and \eqref{2-9}, we deduce that $\Omega_1(\varepsilon, f)$ must satisfy
	\begin{equation*}
	\Omega_1\left(d+\varepsilon|\varepsilon|\tilde{\mathcal{R}}_1(\varepsilon,f)\right)+\varepsilon|\varepsilon|\tilde{\mathcal{R}}_2(\varepsilon, f)-\sum_{i=1}^{m-1} \frac{(-1+\cos(\frac{2\pi i}{m}))}{2\left((-1+\cos(\frac{2\pi i}{m}))^2 +\sin^2(\frac{2\pi i}{m})\right)^{\frac{3}{2}}d^2}+\varepsilon\tilde{\mathcal{R}}_{3}(\varepsilon,f)=0
	\end{equation*}
    with $\tilde{\mathcal{R}}_i(\varepsilon,f)$ $(i=1,2,3)$ the contribution of $\mathcal R_i$ to the first Fourier coefficient.	Direct calculation yields
    \begin{equation*}
    	\Omega_1(\varepsilon, f):=\Omega_1^*+\varepsilon\mathcal{R}_\Omega(\varepsilon,f),
    \end{equation*}
    where $\mathcal{R}_\Omega(\varepsilon,f):X^k_{\log}\to \mathbb{R}$ is some continuous function. Since $\partial_f{\mathcal{R}}_i(\varepsilon,f)$ $(i=1,2,3)$ are continuous from Lemma \ref{lem2-3},  $\partial_f\mathcal{R}_\Omega(\varepsilon,f)h:X^k_{\log}\to \mathbb{R}$ is also continuous. The proof is thus complete.	 
\end{proof}

Now, we are in the position to prove Theorem \ref{thm1}.

{\bf Proof of Theorem \ref{thm1}:}
We first prove $\partial_f\tilde G^1(\varepsilon,f)h: X^k_{\log}\to Y^{k-1}_0$ is an isomorphism. By chain rule, it holds
\begin{equation*}
	\partial_f\tilde G^1(0,0)h=\partial_\Omega G^1(0,\Omega_1^*,0)\partial_f\Omega_1(0,0)h+\partial_f G^1(0,\Omega_1^*,0).
\end{equation*}
From Lemma \ref{lem2-4}, we know $\partial_f\Omega_1(0,0)=0$. Hence $\partial_f\tilde G^1(0,0)h=\partial_f G^1(0,\Omega_1^*,0)$, and we achieve the desired result by Lemma \ref{lem2-4}.

According to Lemma \ref{lem2-1}--Lemma \ref{lem2-5}, we can apply implicit function theorem, and claim that there exists $\varepsilon_0>0$ such that
\begin{equation*}
	\left\{(\varepsilon,f)\in [-\varepsilon_0,\varepsilon_0]\times V^r \ : \ \tilde G_1(\varepsilon,f)=0\right\}
\end{equation*}
is parametrized by one-dimensional curve $\varepsilon\in [-\varepsilon_0,\varepsilon_0]\to (\varepsilon, f_\varepsilon)$. Moreover, we want to show that for each $\varepsilon\in[-\varepsilon_0,\varepsilon_0]\setminus \{0\}$, it always holds $f_\varepsilon\neq 0$. For this purpose, we can apply Taylor formula \eqref{2-5} again on \eqref{2-10} to expand $\mathcal{R}_3(\varepsilon,f)$ as
\begin{equation*}
	\mathcal{R}_3(\varepsilon,f)=-\varepsilon \hat C\sin(2x)+o(\varepsilon)
\end{equation*}
with $\hat C$ a positive constant depending on $\alpha$, $d$ and $m$. On the other hand, by \eqref{2-4} and \eqref{2-10} we have 
$$G_1(\varepsilon,\Omega,0)=\Omega d\sin(x), \ \ \ \ \ \ \ G_2(\varepsilon,\Omega,0)=0$$
Hence it follows that
\begin{equation*}
	G_1(\varepsilon,\Omega,0)\neq 0,  \ \ \ \ \ \forall \, \varepsilon\in [-\varepsilon_0,\varepsilon_0]\setminus \{0\},
\end{equation*}
as long as $\varepsilon_0$ is chosen sufficiently small.

Let $\tilde f(x)=f(-x)$. Our last step is to show that if $(\varepsilon,f)$ is a solution to $\tilde G_1(\varepsilon,f)=0$, then $(-\varepsilon, \tilde f)$ is also a solution. By changing $y$ to $-y$ in the integral representation of $G^1(\varepsilon,\Omega,f)$ and using the fact $f$ is an even function, we obtain $\Omega_1(\varepsilon,f)=\Omega_1(-\varepsilon,\tilde f)$. Then we can insert it into $G^1(\varepsilon,\Omega,f)$ and derive $\tilde G_1(-\varepsilon, \tilde f)=0$ by a similar substitution of variables as before.

Hence the proof of Theorem \ref{thm1} is finished.

\subsection{Existence for the case $1<\alpha<2$}

To prove the regularity of $G^\alpha(\varepsilon, \Omega, f)$, we give an alternative characterization of the space $X^{k+\alpha-1}$.
\begin{lemma}\label{lem2-6}
	For $1<\alpha<2$ and $g(x)=\sum\limits_{j=2}^{\infty}a_j\cos(jx)$, $g\in X^{k+\alpha-1}$ if and only if
	\begin{equation*}
		g\in X^k, \ \ \ \left\|\int_0^{2\pi}\frac{\partial^kg(x-y)-\partial^kg(x)}{|\sin(\frac{y}{2})|^\alpha}dy\right\|_{L^2}<\infty.
	\end{equation*}
\end{lemma}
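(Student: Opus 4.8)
The plan is to follow the proof of Lemma \ref{lem2-1} verbatim, replacing the logarithmic Fourier multiplier that governs the borderline case $\alpha=1$ by a genuine power multiplier of order $j^{\alpha-1}$. The essential point is that the singular integral operator
\begin{equation*}
	T_\alpha g(x):=\int_0^{2\pi}\frac{g(x-y)-g(x)}{|\sin(\frac{y}{2})|^\alpha}dy
\end{equation*}
is a Fourier multiplier on the circle, and is in fact the periodic analogue of the fractional Laplacian $(-\Delta)^{\frac{\alpha-1}{2}}$, whose symbol grows like $|\xi|^{\alpha-1}$. By translation invariance one has $T_\alpha e^{ijx}=m_j e^{ijx}$ with
\begin{equation*}
	m_j=\int_0^{2\pi}\frac{e^{-ijy}-1}{|\sin(\frac{y}{2})|^\alpha}dy=\int_0^{2\pi}\frac{\cos(jy)-1}{|\sin(\frac{y}{2})|^\alpha}dy,
\end{equation*}
the imaginary part vanishing by oddness. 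Since $T_\alpha$ is real and translation invariant, it acts on each real mode by the same scalar $m_j$. The subtraction of $1$ is indispensable: for $1<\alpha<2$ the bare integral $\int_0^{2\pi}|\sin(\frac{y}{2})|^{-\alpha}dy$ diverges, and convergence of $m_j$ rests entirely on the second-order vanishing of $\cos(jy)-1$ at $y=0$ against the order-$\alpha$ singularity of the kernel.

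First I would establish the two-sided bound $c_1 j^{\alpha-1}\le |m_j|\le c_2 j^{\alpha-1}$ for $j\ge 2$, with $0<c_1\le c_2$. After the substitution $u=y/2$ one finds $m_j=-4\int_0^\pi \sin^2(ju)(\sin u)^{-\alpha}\,du$, and the rescaling $u=v/j$ heuristically yields $m_j\sim -\big(4\int_0^\infty \sin^2(v)\,v^{-\alpha}dv\big)\,j^{\alpha-1}$, the integral converging precisely because $1<\alpha<2$. Rigorously, this is the content of the Gamma-function machinery already used in the proof of Lemma \ref{lem2-4}: expressing the regularized kernel against the difference and invoking identity \eqref{2-16} together with Lemma \ref{A-1}, the result reduces to ratios of the form $\Gamma(j+\frac{\alpha}{2})/\Gamma(1+j-\frac{\alpha}{2})$, which by the standard asymptotic $\Gamma(j+a)/\Gamma(j+b)\sim j^{a-b}$ are comparable to $j^{\alpha-1}$. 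Positivity of the comparison constants follows from the monotonicity of the relevant sequence, exactly as in Lemma \ref{lem2-4}.

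With the multiplier bound in hand, I would apply $T_\alpha$ to $\partial^k g$ and use Plancherel. Since $g(x)=\sum_{j\ge2}a_j\cos(jx)$, the coefficients of $\partial^k g$ have modulus $|a_j|j^k$, so that
\begin{equation*}
	\left\|\int_0^{2\pi}\frac{\partial^kg(x-y)-\partial^kg(x)}{|\sin(\frac{y}{2})|^\alpha}dy\right\|_{L^2}^2=\sum_{j=2}^\infty |m_j|^2|a_j|^2 j^{2k}\asymp \sum_{j=2}^\infty |a_j|^2 j^{2(k+\alpha-1)}.
\end{equation*}
Finally, recalling that for a function with this cosine expansion membership in $H^{k+\alpha-1}$ is equivalent to $\sum_{j\ge2}|a_j|^2 j^{2(k+\alpha-1)}<\infty$ (the frequencies $j=0,1$ being absent), the displayed equivalence shows that, granted $g\in X^k$, one has $g\in X^{k+\alpha-1}$ precisely when the stated $L^2$ norm is finite, which is the assertion. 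The main obstacle is the multiplier asymptotics of the first step, and specifically justifying the Gamma evaluation of $m_j$ despite the divergence of $\int_0^{2\pi}|\sin(\frac{y}{2})|^{-\alpha}dy$; one cannot split $m_j$ into separate integrals. I would resolve this either by integrating by parts twice, trading the two derivatives falling on $\cos(jy)$ for two derivatives on the kernel and thereby converting the divergent power $-\alpha$ into the convergent power $2-\alpha\in(0,1)$ to which \eqref{2-16} applies directly, or by analytic continuation of the Gamma ratios in $\alpha$ past the pole at $\alpha=1$. Once $|m_j|\asymp j^{\alpha-1}$ is secured, the remainder is routine Fourier bookkeeping identical to Lemma \ref{lem2-1}.
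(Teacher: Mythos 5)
Your proposal is correct and follows essentially the same route as the paper: identify the singular integral as a Fourier multiplier with symbol comparable to $j^{\alpha-1}$ (the content of Lemma \ref{A-1}, which the paper obtains from the Gamma-function identities), then conclude by Plancherel. Your explicit two-sided bound $c_1 j^{\alpha-1}\le|m_j|\le c_2 j^{\alpha-1}$ is in fact slightly more careful than the paper's $O(j^{\alpha-1})$ notation, since the lower bound is what is needed for the ``only if'' direction, but the substance is identical.
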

\begin{proof}
	By Lemma \ref{A-1} in Appendix and Plancherel's identity, it holds
	\begin{equation*}
		\left\|\int_0^{2\pi}\frac{\partial^kg(x-y)-\partial^kg(x)}{|\sin(\frac{y}{2})|^\alpha}dy\right\|_{L^2}^2=\sum\limits_{j=2}^{\infty}O(j^{\alpha-1})^2|\widehat{\partial^kg}_j|^2=\sum\limits_{j=2}^{\infty}O(j^{k+\alpha-1})^2|\hat{g}_j|^2,
	\end{equation*}
    from which we conclude the two characterizations of the space $X^{k+\alpha-1}$ are equivalent.
\end{proof}

Denote $V^r$ as the open neighborhood of zero in $X^{k+\alpha-1}$
\begin{equation*}
	V^r:=\left\{g\in X^{k+\alpha-1}: \ \|g\|_{X^{k+\alpha-1}}<r\right\}
\end{equation*}
with $0<r<1$ and $k\ge 3$. The following lemma is a version of Lemma \ref{lem2-2} for the case $1<\alpha<2$.
\begin{lemma}\label{lem2-7}
	$G^\alpha(\varepsilon, \Omega, f): \left(-\frac{1}{2}, \frac{1}{2}\right)\times \mathbb{R} \times V^r \rightarrow Y^{k-1}$ continuous.
\end{lemma}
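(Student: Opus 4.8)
The plan is to follow the architecture of the proof of Lemma~\ref{lem2-2}, with the role of Lemma~\ref{lem2-1} now played by its counterpart Lemma~\ref{lem2-6}, and with careful attention to how the heavier kernel singularity $|\sin(\frac{x-y}{2})|^{-\alpha}$ (rather than $|\sin(\frac{x-y}{2})|^{-1}$) is absorbed by the gain of $\alpha-1$ derivatives built into the space $X^{k+\alpha-1}$. I would treat the three pieces $G_1$, $G_2$, $G_3$ of $G^\alpha$ separately. The term $G_1$ from \eqref{2-1} is elementary: since $R(x)=1+\varepsilon|\varepsilon|^\alpha f(x)$, one writes $G_1=\Omega\big(d\sin x+\varepsilon|\varepsilon|^\alpha\mathcal R_1(\varepsilon,f)\big)$ with $\mathcal R_1$ continuous into $Y^{k-1}$, and continuity follows from the smoothness of the prefactors together with the algebra property of $H^{k-1}$.

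For $G_2$ I would first verify that its range lies in $Y^{k-1}$. Oddness is obtained exactly as before by the substitution $y\mapsto -y$, using that $f$ is even and $\sin$ is odd. The only place where a spurious singularity in $\varepsilon$ could appear is the zeroth-order part of $G_{21}$, which carries the prefactor $(\varepsilon|\varepsilon|^\alpha)^{-1}$; here I would apply the Taylor expansion \eqref{2-5} with $A=4\sin^2(\frac{x-y}{2})$ and the corresponding $B$, and note that the leading integral $\int\!\!\!\!\!\!\!\!\!\; {}-{}\sin(x-y)A^{-\alpha/2}dy$ vanishes by symmetry, so the prefactor multiplies a remainder that stays bounded as $\varepsilon\to 0$, leaving a kernel of finite order. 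As in the $\alpha=1$ case, $G_{21}$ is then strictly less singular than the other three pieces. The decisive step is the $\partial^{k-1}$-estimate of the most singular term $G_{22}$. After differentiating $k-1$ times and discarding lower-order contributions, the principal part is
$$\int\!\!\!\!\!\!\!\!\!\; {}-{} \frac{(\partial^k f(y)-\partial^k f(x))\cos(x-y)}{D_\alpha(f)^{\frac{\alpha}{2}}}dy,$$
and since $D_\alpha(f)\sim 4\sin^2(\frac{x-y}{2})$ near the diagonal, this kernel behaves like $|\sin(\frac{x-y}{2})|^{-\alpha}$. By Lemma~\ref{lem2-6} its $L^2$-norm is controlled by $\|f\|_{X^{k+\alpha-1}}$, which is precisely why the base space must be enlarged from $X^k_{\log}$. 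Terms arising from differentiating the denominator produce kernels of the shape $D_\alpha^{-1-\alpha/2}$ multiplied by quadratic differences that vanish to second order on the diagonal; after one application of the mean value theorem these again reduce to the $|\sin(\frac{x-y}{2})|^{-\alpha}$ model and are bounded by $\|f\|_{X^{k+\alpha-1}}$ and a lower-order analogue controlled by the same norm. The pieces $G_{23}$ and $G_{24}$ are less singular and carry spare powers of $\varepsilon|\varepsilon|^\alpha$, so they are handled in the same manner.

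Continuity in $f$ then follows from the difference identity \eqref{2-7}, which expresses $D_\alpha(f_1)^{-\alpha/2}-D_\alpha(f_2)^{-\alpha/2}$ as a numerator linear in $f_1-f_2$ and its derivatives over a product of $D_\alpha$ factors; differentiating $k-1$ times and once more comparing the kernel with $|\sin(\frac{x-y}{2})|^{-\alpha}$ yields $\|G_2(\varepsilon,f_1)-G_2(\varepsilon,f_2)\|_{Y^{k-1}}\le C\|f_1-f_2\|_{X^{k+\alpha-1}}$, while joint continuity in $\varepsilon$ stems from the explicit dependence through $\varepsilon|\varepsilon|^\alpha$ inside $D_\alpha$. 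The term $G_3$ is the easiest: for $\boldsymbol z\in\partial D^\varepsilon_0$ and $i\ge 1$ the distance $\big|(\boldsymbol z(x)-(d,0))-Q_{\frac{2\pi i}{m}}(\boldsymbol z(y)-(d,0))\big|$ has a uniform positive lower bound, so the corresponding kernels are smooth and continuity into $Y^{k-1}$ is immediate once the $\varepsilon^{-1}$ prefactor in \eqref{2-3} is treated by the same Taylor cancellation. I expect the principal obstacle to be the bookkeeping in the $\partial^{k-1}G_{22}$ estimate: one must check that every term produced by the Leibniz and chain rules either matches the $|\sin(\frac{x-y}{2})|^{-\alpha}$ profile controlled by Lemma~\ref{lem2-6} or is genuinely less singular, since for $1<\alpha<2$ the logarithmic slack available in the $\alpha=1$ case is gone and the singularity sits exactly at the borderline where the $X^{k+\alpha-1}$ norm is required.
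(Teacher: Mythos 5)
Your proposal is correct and follows essentially the same route as the paper's proof: the decomposition into $G_1$, $G_{21},\dots,G_{24}$, $G_3$, the Taylor expansion \eqref{2-5} to remove the apparent $\varepsilon$-singularity in $G_{21}$, the $\partial^{k-1}$-estimate of the dominant term $G_{22}$ against the $|\sin(\frac{x-y}{2})|^{-\alpha}$ model kernel controlled via Lemma~\ref{lem2-6}, the difference identity \eqref{2-7} for continuity, and the uniform lower bound on the $G_3$ denominator. The only slight imprecision is the claim that $G_{23}$ carries a spare power of $\varepsilon|\varepsilon|^\alpha$ (it does not; it is simply less singular than $G_{22}$), but this does not affect the argument.
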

\begin{proof}
	It is obvious that that $G_1: \left(-\frac{1}{2}, \frac{1}{2}\right)\times \mathbb{R} \times V^r \rightarrow Y^{k-1}$ is continuous, and can be rewritten as 
	\begin{equation}\label{2-20}
		G_1=\Omega \left(d\sin(x)+\varepsilon|\varepsilon|^\alpha\mathcal{R}_1(\varepsilon,f)\right),
	\end{equation}
	where $\mathcal{R}_1(\varepsilon,f): \left(-\frac{1}{2}, \frac{1}{2}\right)\times \mathbb{R} \times V^r \rightarrow Y^{k-1}$ is continuous. 
	
	Since $f\in V^r$, by changing $y$ to $-y$, we derive that $G_2(\varepsilon,f)$ is odd. Applying Taylor formula \eqref{2-5} on $G_{21}$, we can also show the possible singularity at $\varepsilon=0$ can not occur. From the proof of Lemma \ref{lem2-2} we know that the most singular term is $G_{22}$. For this reason, we can take $\partial^{k-1}$ derivatives of $G_{22}$
    \begin{equation*}
    	\begin{split}
    		\partial^{k-1}G_{22}&=C_\alpha\int\!\!\!\!\!\!\!\!\!\; {}-{}\frac{(\partial^kf(y)-\partial^kf(x))\cos(x-y)dy}{\left( |\varepsilon|^{2+2\alpha}\left(f(x)-f(y)\right)^2+4(1+\varepsilon|\varepsilon|^\alpha f(x))(1+\varepsilon|\varepsilon|^\alpha f(y))\sin^2\left(\frac{x-y}{2}\right)\right)^{\frac{\alpha}{2}}}\\	
    		& \ \ \ \ -\frac{\alpha C_\alpha}{2}\int\!\!\!\!\!\!\!\!\!\; {}-{}\frac{\cos(x-y)}{\left( |\varepsilon|^{2+2\alpha}\left(f(x)-f(y)\right)^2+4(1+\varepsilon|\varepsilon|^\alpha f(x))(1+\varepsilon|\varepsilon|^\alpha f(y))\sin^2\left(\frac{x-y}{2}\right)\right)^{1+\frac{\alpha}{2}}}\\
    		& \ \ \ \ \times\left(2(f(x)-f(y))(f'(x)-f'(y))+4(f(x)f'(y)+f'(x)f(y))\sin^2(\frac{x-y}{2})\right)\\
    		& \ \ \ \ \times(\partial^{k-1}f(y)-\partial^{k-1}f(x))dy +l.o.t,
    	\end{split}
    \end{equation*}
    where $l.o.t$ stands for the lower order derivative terms. Since $\|\partial^if\|_{L^\infty}\le C \|f\|_{X^{k+\alpha-1}}<\infty$ for $i=0,1,2$, by H\"older inequality and mean value theorem, we conclude that
    \begin{equation*}
    	\begin{split}
    		\left\|\partial^{k-1}G_{22}\right\|_{L^2}&\le C\left\|\int\!\!\!\!\!\!\!\!\!\; {}-{}\frac{\partial^kf(x)-\partial^kf(y)}{|\sin(\frac{x-y}{2})|^\alpha}dy\right\|_{L^2}+C\left\|\int\!\!\!\!\!\!\!\!\!\; {}-{}\frac{\partial^{k-1}f(x)-\partial^{k-1}f(y)}{|\sin(\frac{x-y}{2})|^\alpha}dy\right\|_{L^2}\\
    		&\le C \|f\|_{X^{k+\alpha-1}}+C\|f\|_{X^{k+\alpha-2}}<\infty,
    	\end{split}
    \end{equation*}
    where we use the alternative characterization of the space $X^{k+\alpha-1}$ in Lemma \ref{lem2-6}. So we have verified that the range of $G_2$ is in $Y^{k-1}$.
    
    By \eqref{2-7} and notations given in Lemma \ref{lem2-2}, we can prove the continuity for the most singular term $G_{22}$, namely, for $f_1,f_2\in V^r$, it holds $\|G_{22}(\varepsilon, f_1)-G_{22}(\varepsilon, f_2)\|_{Y^{k-1}}\le C\|f_1-f_2\|_{X^{k+\alpha-1}}$. Hence we claim that $G_2(\varepsilon, f): \left(-\frac{1}{2}, \frac{1}{2}\right)\times V^r \rightarrow Y^{k-1}$ is continuous. Using Taylor formula \eqref{2-5}, we collect the main terms of $G_2$ and obtain
    \begin{equation}\label{2-21}
    	G_2=C_\alpha(1-\frac{\alpha}{2})\int\!\!\!\!\!\!\!\!\!\; {}-{} \frac{f(x-y)\sin(y)dy}{\left(4\sin^2(\frac{y}{2})\right)^{\frac{\alpha}{2}}}-C_\alpha\int\!\!\!\!\!\!\!\!\!\; {}-{} \frac{(f'(x)-f'(x-y))\cos(y)dy}{\left(4\sin^2(\frac{y}{2})\right)^{\frac{\alpha}{2}}}+\varepsilon|\varepsilon|^\alpha \mathcal{R}_{2}(\varepsilon,f),
    \end{equation}
    where $\mathcal{R}_2(\varepsilon, f): \left(-\frac{1}{2}, \frac{1}{2}\right)\times V^r \rightarrow Y^{k-1}$ is continuous.
    
    At last, we have no difficulty showing that $G_3(\varepsilon, f): \left(-\frac{1}{2}, \frac{1}{2}\right)\times V^r \rightarrow Y^{k-1}$ is continuous. We can use Taylor formula \eqref{2-5} again to derive 
    \begin{equation}\label{2-22}
    	G_3=-\sum_{i=1}^{m-1} \frac{\alpha C_\alpha(-1+\cos(\frac{2\pi i}{m}))\sin (x)}{2\left((-1+\cos(\frac{2\pi i}{m}))^2 +\sin^2(\frac{2\pi i}{m})\right)^{1+\frac{\alpha}{2}}d^{1+\alpha}}+|\varepsilon|^\alpha\mathcal{R}_{3}(\varepsilon,f),
    \end{equation}
    where $\mathcal{R}_3(\varepsilon, f): \left(-\frac{1}{2}, \frac{1}{2}\right)\times V^r \rightarrow Y^{k-1}$ is continuous. So the proof is complete.    
\end{proof}

For $(\varepsilon,\Omega,f)\in \left(-\frac{1}{2}, \frac{1}{2}\right)\times \mathbb{R} \times V^r$ and $h\in X^{k+\alpha-1}$, we define
\begin{equation*}
	\partial_fG^\alpha(\varepsilon, \Omega, f)h:=\lim\limits_{t\to0}\frac{1}{t}\left(G^\alpha(\varepsilon, \Omega, f+th)-G^\alpha(\varepsilon, \Omega, f)\right)
\end{equation*}
to be the Gateaux derivative of $G^\alpha(\varepsilon, \Omega, f)$. The continuity of $\partial_fG^\alpha(\varepsilon, \Omega, f)h$ can be derived using a same method as $\alpha=1$. Thus we omit its proof, and just state it as follow. 
\begin{lemma}\label{lem2-8}
	For every $(\varepsilon,\Omega,f)\in \left(-\frac{1}{2}, \frac{1}{2}\right)\times \mathbb{R} \times V^r$, $\partial_fG^\alpha(\varepsilon, \Omega, f)h: X^{k+\alpha-1}\rightarrow Y^{k-1}$ is continuous.
\end{lemma}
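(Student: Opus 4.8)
The plan is to follow verbatim the three-term structure used for $\alpha=1$ in Lemma~\ref{lem2-3}, writing $\partial_f G^\alpha(\varepsilon,\Omega,f)h = \partial_f G_1 h + \partial_f G_2 h + \partial_f G_3 h$ according to the decomposition \eqref{2-1}--\eqref{2-3}. The two outer terms are immediate: from the representation \eqref{2-20} one reads off $\partial_f G_1(\varepsilon,\Omega,f)h = \Omega\,\varepsilon|\varepsilon|^\alpha\,\partial_f\mathcal{R}_1(\varepsilon,f)h$, and from \eqref{2-22} one gets $\partial_f G_3(\varepsilon,f)h = |\varepsilon|^\alpha\,\partial_f\mathcal{R}_3(\varepsilon,f)h$. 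In both cases the prefactor $|\varepsilon|^\alpha$ multiplies a remainder whose kernel stays away from the diagonal singularity (for $G_3$ the denominators $|(\boldsymbol z(x)-(d,0)) - Q_{2\pi i/m}(\boldsymbol z(y)-(d,0))|$ have a positive lower bound), so the continuity in $(\varepsilon,f,h)$ of these two pieces is routine. Everything therefore reduces to the diagonal term $G_2$.

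For $G_2$ I would first compute the Gateaux derivative explicitly, obtaining $\partial_f G_2(\varepsilon,f)h = F_1+F_2+F_3+F_4$ given by the same formulas \eqref{2-11}--\eqref{2-14} as for $\alpha=1$, but with the denominator exponents $\tfrac12$ and $\tfrac32$ replaced by $\tfrac{\alpha}{2}$ and $1+\tfrac{\alpha}{2}$ and the kernel written in terms of $D_\alpha(f)$. The verification then splits into two steps, exactly as in Lemma~\ref{lem2-3}: \emph{(i)} show that the difference quotients converge, i.e. $\|t^{-1}(G_{2i}(\varepsilon,f+th)-G_{2i}(\varepsilon,f)) - F_i\|_{Y^{k-1}}\to 0$ as $t\to0$ for each $i$; and \emph{(ii)} show that the limiting operator $\partial_f G_2(\varepsilon,f)h$ depends continuously on $(\varepsilon,f)$ and boundedly, linearly on $h$. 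As in the $\alpha=1$ argument it suffices to treat the most singular term, which is again $G_{22}$, since $G_{21}, G_{23}, G_{24}$ carry either extra factors of $\varepsilon|\varepsilon|^\alpha$ or a strictly less singular kernel.

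The main obstacle, and the only place where $1<\alpha<2$ differs substantively from $\alpha=1$, is the control of the second difference of the kernel $D_\alpha(\cdot)^{-\alpha/2}$ against the genuinely stronger singularity $|\sin(\tfrac{x-y}{2})|^{-\alpha}$. The key tool is the mean-value identity \eqref{2-7}, which expresses $D_\alpha(f_1)^{-\alpha/2}-D_\alpha(f_2)^{-\alpha/2}$ with a single power of the difference $f_1-f_2$ (or, in step \emph{(i)}, of $th$) in the numerator and a product of three $D_\alpha$-factors in the denominator. Using $D_\alpha(g)\sim \sin^2(\tfrac{x-y}{2})$ near the diagonal, after taking $\partial^{k-1}$ derivatives the worst contribution of $\partial^{k-1}G_{22}$ reduces to an integral of the form $\int\!\!\!\!\!\!\!\!\!\; {}-{}\,\frac{\partial^k f(x)-\partial^k f(y)}{|\sin(\frac{x-y}{2})|^\alpha}\,dy$ (and the analogous one carrying $\partial^k h$). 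Whereas for $\alpha=1$ such an integral is controlled by the $X^k_{\log}$ norm, here it is precisely the quantity characterized in Lemma~\ref{lem2-6}, so its $L^2$ norm is bounded by $\|f\|_{X^{k+\alpha-1}}$ (respectively $\|h\|_{X^{k+\alpha-1}}$). Feeding this bound into step \emph{(i)}, where an extra factor $t$ forces the limit to vanish, and into step \emph{(ii)}, where \eqref{2-7} yields the Lipschitz estimate $\|\partial_f G_2(\varepsilon,f_1)h - \partial_f G_2(\varepsilon,f_2)h\|_{Y^{k-1}}\le C\|f_1-f_2\|_{X^{k+\alpha-1}}\|h\|_{X^{k+\alpha-1}}$, closes the argument. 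I expect the bookkeeping of the several lower-order terms to be tedious but harmless; the genuine work is confined to the single $|\sin(\tfrac{x-y}{2})|^{-\alpha}$ estimate supplied by Lemma~\ref{lem2-6}.
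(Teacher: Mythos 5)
Your proposal is correct and follows exactly the route the paper intends: the paper omits the proof of Lemma \ref{lem2-8}, stating only that it is obtained ``using a same method as $\alpha=1$,'' and your argument is precisely that method --- the decomposition into $\partial_f G_1h+\partial_f G_2h+\partial_f G_3h$, the two-step verification (convergence of difference quotients, then continuity in $(\varepsilon,f)$) for the most singular piece $G_{22}$ via the mean-value identity \eqref{2-7}, with the only substantive change being the replacement of the $X^k_{\log}$ estimate by the $|\sin(\frac{x-y}{2})|^{-\alpha}$ estimate supplied by Lemma \ref{lem2-6}, exactly as in the proof of Lemma \ref{lem2-7}.
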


Calculate as the case $\alpha=1$. One has
\begin{equation*}
	\partial_fG^\alpha(0,\Omega,0)h=C_\alpha(1-\frac{\alpha}{2})\int\!\!\!\!\!\!\!\!\!\; {}-{} \frac{h(x-y)\sin(y)dy}{\left(4\sin^2(\frac{y}{2})\right)^{\frac{\alpha}{2}}}-C_\alpha\int\!\!\!\!\!\!\!\!\!\; {}-{} \frac{(h'(x)-h'(x-y))\cos(y)dy}{\left(4\sin^2(\frac{y}{2})\right)^{\frac{\alpha}{2}}}.
\end{equation*}
We can obtain the corresponding result of Lemma \ref{lem2-4}:
\begin{lemma}\label{lem2-9}
	Let $h=\sum\limits_{j=2}^\infty a_j\cos(jx)$ be in $X^{k+\alpha-1}$. Then it holds
	\begin{equation*}
		\partial_fG^\alpha(0,\Omega,0)h=\sum\limits_{j=2}^\infty a_j \gamma_j j\sin(jx) 
	\end{equation*}
	with
	\begin{equation*}
		\gamma_j=2^{\alpha-1}\frac{ \Gamma(1-\alpha)}{(\Gamma(1-\frac{\alpha}{2}))^2}\left(\frac{\Gamma(1+\frac{\alpha}{2})}{\Gamma(2-\frac{\alpha}{2})}-\frac{\Gamma(j+\frac{\alpha}{2})}{\Gamma(1+j-\frac{\alpha}{2})}\right).
	\end{equation*}
	Moreover, for each $\Omega\in\mathbb R$, $\partial_fG^1(0,\Omega,0):X^{k+\alpha-1}\to Y_0^{k-1}$ is an isomorphism.
\end{lemma}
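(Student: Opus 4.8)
The plan is to follow the template of Lemma \ref{lem2-4} almost verbatim, keeping every computation at the level of Gamma functions rather than specializing to $\alpha=1$. Since the explicit integral formula for $\partial_f G^\alpha(0,\Omega,0)h$ displayed just above the statement does not depend on $\Omega$, it suffices to evaluate it on a single mode $a_j\cos(jx)$ and sum. For the leading integral $C_\alpha(1-\tfrac{\alpha}{2})\int\!\!\!\!\!\!\!\!\!\; {}-{}\frac{a_j\cos(jx-jy)\sin(y)}{(4\sin^2(y/2))^{\alpha/2}}dy$, I would use $\sin(y)=2\sin(\tfrac{y}{2})\cos(\tfrac{y}{2})$ to rewrite the kernel as $2^{1-\alpha}\cos(\tfrac{y}{2})|\sin(\tfrac{y}{2})|^{1-\alpha}$ and then apply the antiderivative identity $\cos(\tfrac{y}{2})|\sin(\tfrac{y}{2})|^{1-\alpha}=\tfrac{2}{2-\alpha}\partial_y|\sin(\tfrac{y}{2})|^{2-\alpha}$ exactly as in \eqref{2-17}. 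Integrating by parts throws the $j$-derivative onto $\cos(jx-jy)$, producing $\sin(jx-jy)|\sin(\tfrac{y}{2})|^{2-\alpha}$; the even part in $y$ drops out, leaving $\sin(jx)$ times $\int\!\!\!\!\!\!\!\!\!\; {}-{}\cos(jy)|\sin(\tfrac{y}{2})|^{2-\alpha}dy$, which is evaluated in closed form by the integral identity \eqref{2-16}.

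For the second integral I would expand $\cos(y)=1-2\sin^2(\tfrac{y}{2})$ so that its kernel splits as $2^{-\alpha}|\sin(\tfrac{y}{2})|^{-\alpha}-2^{1-\alpha}|\sin(\tfrac{y}{2})|^{2-\alpha}$, reducing the problem to the two difference-quotient integrals of $(\sin(jx)-\sin(jx-jy))$ against $|\sin(\tfrac{y}{2})|^{-\alpha}$ and against $|\sin(\tfrac{y}{2})|^{2-\alpha}$. Both are handled by Lemma \ref{A-1} together with \eqref{2-16}, exactly as in \eqref{2-18}. Collecting the contributions and simplifying with the recursion $\Gamma(z+1)=z\Gamma(z)$ should consolidate everything into the stated multiplier $\gamma_j j\sin(jx)$; this Gamma-function bookkeeping is the most error-prone part of the eigenvalue computation, so I would sanity-check it against the known $\alpha=1$ value $\gamma_j=\tfrac{2}{\pi}\sum_{i=1}^j\tfrac{1}{2i-1}$.

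It then remains to show that the operator is an isomorphism from $X^{k+\alpha-1}$ to $Y_0^{k-1}$, and here the key is the sign and the growth rate of $\gamma_j$. For $1<\alpha<2$ one has $1-\alpha\in(-1,0)$, so $\Gamma(1-\alpha)<0$; meanwhile $r_j:=\tfrac{\Gamma(j+\alpha/2)}{\Gamma(1+j-\alpha/2)}$ is strictly increasing in $j$ and exceeds its $j=1$ value $\tfrac{\Gamma(1+\alpha/2)}{\Gamma(2-\alpha/2)}$ for every $j\ge 2$, so the bracket is negative. The product of the negative prefactor $2^{\alpha-1}\Gamma(1-\alpha)/(\Gamma(1-\tfrac{\alpha}{2}))^2$ with the negative bracket is positive, whence $\gamma_j>0$ for all $j\ge 2$ and in fact $\{\gamma_j\}$ is increasing with positive lower bound $\gamma_2$; in particular the kernel is trivial. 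By Stirling's formula $r_j=j^{\alpha-1}(1+O(1/j))$, so $\gamma_j\asymp j^{\alpha-1}$ and the multiplier $\gamma_j j\asymp j^{\alpha}$ has precisely the order matching the index gap $(k+\alpha-1)-(k-1)=\alpha$.

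Finally, for surjectivity, given $p=\sum_{j\ge 2}b_j\sin(jx)\in Y_0^{k-1}$ I would set $h=\sum_{j\ge 2}b_j(\gamma_j j)^{-1}\cos(jx)$; using $\gamma_j j\asymp j^{\alpha}$ together with the characterization of $X^{k+\alpha-1}$ in Lemma \ref{lem2-6}, one has $\sum_{j\ge 2}|b_j|^2(\gamma_j j)^{-2}j^{2(k+\alpha-1)}\asymp\sum_{j\ge 2}|b_j|^2 j^{2(k-1)}=\|p\|_{Y_0^{k-1}}^2<\infty$, so $h\in X^{k+\alpha-1}$ and $\partial_f G^\alpha(0,\Omega,0)h=p$, and the same two-sided comparison yields boundedness of both the operator and its inverse. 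I expect the main obstacle to be the closed-form evaluation of the two fractional integrals and the consolidation of the Gamma functions into the clean formula for $\gamma_j$, rather than the isomorphism step, which is routine once the positivity $\gamma_j>0$ and the asymptotics $\gamma_j\asymp j^{\alpha-1}$ are established.
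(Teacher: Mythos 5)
Your proposal is correct and follows essentially the same route as the paper: the eigenvalue computation is exactly the general-$\alpha$ calculation already recorded in \eqref{2-17}--\eqref{2-18} (via the antiderivative identity, the integral identity \eqref{2-16}, and Lemma \ref{A-1}), and the isomorphism step is the same inverse-multiplier argument using $\gamma_j>0$ and $\gamma_j=O(j^{\alpha-1})$ together with the characterization of $X^{k+\alpha-1}$ in Lemma \ref{lem2-6}. Your explicit sign analysis of $\gamma_j$ (negativity of $\Gamma(1-\alpha)$ against the negativity of the bracket, via monotonicity of $r_j$) is a welcome elaboration of what the paper merely cites from Lemma \ref{A-1}.
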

\begin{proof}
	The first half of this lemma is a direct result of \eqref{2-17} and \eqref{2-18}. 
	
	To prove $\partial_fG^1(0,\Omega,0):X^{k+\alpha-1}\to Y_0^{k-1}$ is an isomorphism, we notice that $\{\gamma_j\}$ is a monotone sequence with positive lower bound by Lemma \ref{A-1}. So the kernel of $\partial_fG^1(0,\Omega,0)$ is trival. It suffices to show that for each $p(x)\in Y_0^{k-1}$, there exists a $h(x) \in X^{k+\alpha-1}$ such that $\partial_fG^1(0,\Omega,0)h=p$. From the first half of this lemma, if $p$ is given by $p(x)=\sum\limits_{j=2}^\infty b_j\sin(jx)$, then $h$ satisfies
	\begin{equation*}
		h(x)=\sum\limits_{j=2}^\infty b_j\gamma_j^{-1}j^{-1}\cos(jx).
	\end{equation*}
	Since $\gamma_j=O(j^{\alpha-1})$ from Lemma \ref{A-1}, we deduce
	$$\|h\|_{X^{k+\alpha-1}}=\sum\limits_{j=2}^\infty b_j^2\gamma_j^{-2}j^{-2}j^{2k+2\alpha-2}\le C\sum\limits_{j=2}^\infty b_j^2j^{2k-2}\le C\|p\|_{Y_0^{k-1}},$$
	and the proof is complete.	
\end{proof}

According to \eqref{2-20} \eqref{2-21} \eqref{2-22}, if we denote
\begin{equation}\label{2-23}
	\Omega_\alpha^*:=\sum_{i=1}^{m-1} \frac{\alpha C_\alpha(-1+\cos(\frac{2\pi i}{m}))}{2\left((-1+\cos(\frac{2\pi i}{m}))^2 +\sin^2(\frac{2\pi i}{m})\right)^{1+\frac{\alpha}{2}}d^{2+\alpha}},
\end{equation}
then it holds $G^\alpha(0,\Omega_1^*,0)=0$. As the previous case $\alpha=1$, we are to restrict the range of $G^\alpha(\varepsilon,\Omega,f)$ in $Y_0^{k-1}$ by adjusting angular velocity $\Omega$.
\begin{lemma}\label{lem2-10}
	There exists
	\begin{equation*}
		\Omega_\alpha(\varepsilon, f):=\Omega_\alpha^*+\varepsilon^\alpha\mathcal{R}_\Omega(\varepsilon,f)
	\end{equation*}
	with $\Omega_\alpha^*$ given in \eqref{2-23} and  continuous function $\mathcal{R}_\Omega(\varepsilon,f):X^{k+\alpha-1}\to \mathbb{R}$, such that $\tilde G^\alpha(\varepsilon,f):(-\frac{1}{2},\frac{1}{2})\times V^r\to Y_0^{k-1}$ is given by
	\begin{equation*}
		\tilde G^\alpha(\varepsilon,f):=G^\alpha(\varepsilon,\Omega_1(\varepsilon,f), f).
	\end{equation*}
	Moreover, $\partial_f\mathcal{R}_\Omega(\varepsilon,f)h:X^{k+\alpha-1}\to \mathbb{R}$ is continuous.
\end{lemma}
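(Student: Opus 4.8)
The plan is to repeat the argument of Lemma~\ref{lem2-5}, adjusting the powers of $\varepsilon$ to the present range $1<\alpha<2$. By Lemma~\ref{lem2-7} the map $G^\alpha$ already takes values in the odd space $Y^{k-1}$, so in order to land in $Y_0^{k-1}$ it suffices to choose $\Omega=\Omega_\alpha(\varepsilon,f)$ so that the coefficient of $\sin(x)$ in $G^\alpha(\varepsilon,\Omega,f)$ vanishes. First I would read off this first Fourier coefficient from the three expansions \eqref{2-20}, \eqref{2-21} and \eqref{2-22}, writing $\tilde{\mathcal R}_i(\varepsilon,f)$ for the contribution of $\mathcal R_i$ to the $\sin(x)$ mode. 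From \eqref{2-20}, $G_1$ contributes $\Omega\big(d+\varepsilon|\varepsilon|^\alpha\tilde{\mathcal R}_1(\varepsilon,f)\big)$. The decisive structural point is that the two leading integrals in \eqref{2-21} are precisely $\partial_fG^\alpha(0,\Omega,0)$ applied to $f$; by Lemma~\ref{lem2-9} this operator sends $\cos(jx)$ to $\gamma_j\,j\sin(jx)$, and since every $f\in V^r\subset X^{k+\alpha-1}$ has Fourier support in $\{j\ge 2\}$, it produces no $\sin(x)$ term. Hence $G_2$ contributes only $\varepsilon|\varepsilon|^\alpha\tilde{\mathcal R}_2(\varepsilon,f)$, of order $\varepsilon^{1+\alpha}$. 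Finally \eqref{2-22} shows that $G_3$ contributes the explicit constant coming from its leading term together with a remainder $|\varepsilon|^\alpha\tilde{\mathcal R}_3(\varepsilon,f)$.

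Collecting these, the vanishing of the $\sin(x)$ coefficient is equivalent to the scalar relation
\[
\Omega_\alpha\big(d+\varepsilon|\varepsilon|^\alpha\tilde{\mathcal R}_1\big)+\varepsilon|\varepsilon|^\alpha\tilde{\mathcal R}_2-\sum_{i=1}^{m-1}\frac{\alpha C_\alpha(-1+\cos(\frac{2\pi i}{m}))}{2\big((-1+\cos(\frac{2\pi i}{m}))^2+\sin^2(\frac{2\pi i}{m})\big)^{1+\frac{\alpha}{2}}d^{1+\alpha}}+|\varepsilon|^\alpha\tilde{\mathcal R}_3=0 .
\]
This is affine in $\Omega_\alpha$ with leading coefficient $d+\varepsilon|\varepsilon|^\alpha\tilde{\mathcal R}_1$, which stays bounded away from zero for small $|\varepsilon|$ because $d>1$ and the $\tilde{\mathcal R}_i$ are bounded. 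Solving and evaluating at $\varepsilon=0$ recovers exactly $\Omega_\alpha^*$ in \eqref{2-23}, the division by $d$ turning the exponent $d^{1+\alpha}$ into $d^{2+\alpha}$. Since the dominant $\varepsilon$-dependent term of the quotient is $|\varepsilon|^\alpha$, a Taylor expansion yields $\Omega_\alpha(\varepsilon,f)=\Omega_\alpha^*+\varepsilon^\alpha\mathcal R_\Omega(\varepsilon,f)$, and the continuity of $\mathcal R_\Omega$ follows from the continuity of the $\mathcal R_i$ proved in Lemma~\ref{lem2-7} together with the fact that the denominator is bounded below.

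For the last assertion I would differentiate the explicit quotient defining $\Omega_\alpha(\varepsilon,f)$ with respect to $f$. As it is a rational expression in the scalars $\tilde{\mathcal R}_i(\varepsilon,f)$ whose Gateaux derivatives $\partial_f\tilde{\mathcal R}_i(\varepsilon,f)h$ are continuous---this being inherited from the continuity of $\partial_fG^\alpha$ in Lemma~\ref{lem2-8}---and as the denominator never vanishes, the quotient rule shows that $\partial_f\mathcal R_\Omega(\varepsilon,f)h\colon X^{k+\alpha-1}\to\mathbb R$ is continuous.

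I expect no essentially new difficulty beyond the case $\alpha=1$ treated in Lemma~\ref{lem2-5}; the only points requiring care are the bookkeeping of the now fractional powers $\varepsilon|\varepsilon|^\alpha$ and $|\varepsilon|^\alpha$, and the verification through Lemma~\ref{lem2-9} that the $f$-dependent leading part of $G_2$ carries no $\sin(x)$ mode, which is exactly what makes $\Omega_\alpha$ solvable from a single invertible scalar equation rather than forcing a genuine dependence of $\Omega_\alpha$ on the full profile $f$.
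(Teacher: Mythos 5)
Your proposal is correct and follows essentially the same route as the paper: both reduce the matter to the vanishing of the $\sin(x)$ Fourier coefficient, arrive at the same affine scalar equation $\Omega_\alpha\bigl(d+\varepsilon|\varepsilon|^\alpha\tilde{\mathcal R}_1\bigr)+\varepsilon|\varepsilon|^\alpha\tilde{\mathcal R}_2-\Omega_\alpha^* d+|\varepsilon|^\alpha\tilde{\mathcal R}_3=0$, solve it by dividing by the nonvanishing coefficient $d+O(\varepsilon^{1+\alpha})$, and deduce continuity of $\mathcal R_\Omega$ and $\partial_f\mathcal R_\Omega$ from that of the $\tilde{\mathcal R}_i$ and their Gateaux derivatives. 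Your explicit justification (via Lemma~\ref{lem2-9} and the Fourier support of $f$ in $\{j\ge 2\}$) that the leading part of $G_2$ carries no $\sin(x)$ mode is exactly the point the paper delegates to the proof of Lemma~\ref{lem2-5}, so no new content is added or missing.
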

\begin{proof}
	Similar to the proof of Lemma \ref{lem2-5}, by \eqref{2-20} \eqref{2-21} \eqref{2-22} we claim that $\Omega_\alpha(\varepsilon, f)$ must satisfy
	\begin{equation*}
		\Omega_\alpha\left(d+\varepsilon|\varepsilon|^\alpha\tilde{\mathcal{R}}_1\right)+\varepsilon|\varepsilon|^\alpha\tilde{\mathcal{R}}_2-\sum_{i=1}^{m-1} \frac{\alpha C_\alpha(-1+\cos(\frac{2\pi i}{m}))}{2\left((-1+\cos(\frac{2\pi i}{m}))^2 +\sin^2(\frac{2\pi i}{m})\right)^{1+\frac{\alpha}{2}}d^{1+\alpha}}+\varepsilon^\alpha\tilde{\mathcal{R}}_{3}=0
	\end{equation*}
	with $\tilde{\mathcal{R}}_i(\varepsilon,f)$ $(i=1,2,3)$ the contribution of $\mathcal{R}_i$ to the first Fourier coefficient.	Direct calculation yields
	\begin{equation*}
		\Omega_1(\varepsilon, f):=\Omega_1^*+\varepsilon\mathcal{R}_\Omega(\varepsilon,f)
	\end{equation*}
	with some continuous function $\mathcal{R}_\Omega(\varepsilon,f):X^{k+\alpha-1}\to \mathbb{R}$. Since $\partial_f{\mathcal{R}}_i(\varepsilon,f)$ $(i=1,2,3)$ are continuous, $\partial_f\mathcal{R}_\Omega(\varepsilon,f)h:X^{k+\alpha-1}\to \mathbb{R}$ is also continuous. Hence we have completed the proof.
\end{proof}

We are prepared to prove Theorem \ref{thm2}.

{\bf Proof of Theorem \ref{thm2}:}
Similar to the proof of Theorem \ref{thm1}, we can prove $\partial_f\tilde G^\alpha(0,0)h: X^{k+\alpha-1}\to Y^{k-1}_0$ is an isomorphism. Then by implicit function theorem, there exists $\varepsilon_0>0$ such that
\begin{equation*}
	\left\{(\varepsilon,f)\in [-\varepsilon_0,\varepsilon_0]\times V^r \ : \ \tilde G_1(\varepsilon,f)=0\right\}
\end{equation*}
is parametrized by one-dimensional curve $\varepsilon\in [-\varepsilon_0,\varepsilon_0]\to (\varepsilon, f_\varepsilon)$. Using the strategy for the case $\alpha=1$, we can prove that for each $\varepsilon\in[-\varepsilon_0,\varepsilon_0]\setminus \{0\}$, it always holds $f_\varepsilon\neq 0$. Moreover, by substitution of variables, it is easy to see that if $(\varepsilon,f)$ is a solution to $\tilde G_1(\varepsilon,f)=0$, then $(-\varepsilon, \tilde f)$ is also a solution with $\tilde f(x)=f(-x)$. So the proof is complete.\qed

\section{Existence of travelling global solutions for the gSQG equation}

As in the previous section, the point is to deal with $H^\alpha(\varepsilon, W, f)$ with the form
\begin{equation*}
	H^\alpha(\varepsilon, W, f)=H_1+H_2+H_3,
\end{equation*}
where
\begin{equation*}
	H_1=-W\left(\sin(x)-\frac{\varepsilon|\varepsilon|^\alpha f'(x)}{(1+\varepsilon|\varepsilon|^\alpha f(x))}\cos(x)\right),
\end{equation*}

\begin{equation*}
	\begin{split}
		H_2&=\frac{C_\alpha}{\varepsilon|\varepsilon|^{\alpha}}\int\!\!\!\!\!\!\!\!\!\; {}-{} \frac{(1+\varepsilon|\varepsilon|^\alpha f(y))\sin(x-y)dy}{\left( |\varepsilon|^{2+2\alpha}\left(f(x)-f(y)\right)^2+4(1+\varepsilon|\varepsilon|^\alpha f(x))(1+\varepsilon|\varepsilon|^\alpha f(y))\sin^2\left(\frac{x-y}{2}\right)\right)^{\frac{\alpha}{2}}}\\
		&+C_\alpha\int\!\!\!\!\!\!\!\!\!\; {}-{} \frac{(f'(y)-f'(x))\cos(x-y)dy}{\left( |\varepsilon|^{2+2\alpha}\left(f(x)-f(y)\right)^2+4(1+\varepsilon|\varepsilon|^\alpha f(x))(1+\varepsilon|\varepsilon|^\alpha f(y))\sin^2\left(\frac{x-y}{2}\right)\right)^{\frac{\alpha}{2}}}\\		
		&+\frac{C_\alpha f'(x)}{1+\varepsilon|\varepsilon|^\alpha f(x)}\int\!\!\!\!\!\!\!\!\!\; {}-{} \frac{(f(x)-f'(y))\cos(x-y)dy}{\left( |\varepsilon|^{2+2\alpha}\left(f(x)-f(y)\right)^2+4(1+\varepsilon|\varepsilon|^\alpha f(x))(1+\varepsilon|\varepsilon|^\alpha f(y))\sin^2\left(\frac{x-y}{2}\right)\right)^{\frac{\alpha}{2}}}\\	
		&+\frac{C_\alpha\varepsilon|\varepsilon|^\alpha}{1+\varepsilon|\varepsilon|^\alpha f(x)}\int\!\!\!\!\!\!\!\!\!\; {}-{} \frac{f'(x)f'(y)\sin(x-y)dy}{\left( |\varepsilon|^{2+2\alpha}\left(f(x)-f(y)\right)^2+4(1+\varepsilon|\varepsilon|^\alpha f(x))(1+\varepsilon|\varepsilon|^\alpha f(y))\sin^2\left(\frac{x-y}{2}\right)\right)^{\frac{\alpha}{2}}},\\	
	\end{split}
\end{equation*}
and
\begin{equation*}
	\begin{split}
		&H_3=\frac{C_\alpha}{\varepsilon}\int\!\!\!\!\!\!\!\!\!\; {}-{} \frac{(1+\varepsilon|\varepsilon|^\alpha f(y))\sin(x-y)dy}{\left|(R(x)\cos(x)+R(y)\cos(y)-2d)^2+(R(x)\sin(x)+R(y)\sin(y))^2\right|^{\frac{\alpha}{2}}}\\
		&+\frac{C_\alpha}{(1+\varepsilon|\varepsilon|^\alpha f(x))}\int\!\!\!\!\!\!\!\!\!\; {}-{} \frac{\varepsilon |\varepsilon|^{2\alpha}f'(x)f'(y)\sin(x-y)dy}{\left|(R(x)\cos(x)+R(y)\cos(y)-2d)^2+(R(x)\sin(x)+R(y)\sin(y))^2\right|^{\frac{\alpha}{2}}}\\
		&+\frac{C_\alpha}{(1+\varepsilon|\varepsilon|^\alpha f(x))}\int\!\!\!\!\!\!\!\!\!\; {}-{} \frac{(|\varepsilon|^\alpha f(x)f'(y)-|\varepsilon|^\alpha f(x)f'(y))\cos(x-y)dy}{\left|(R(x)\cos(x)+R(y)\cos(y)-2d)^2+(R(x)\sin(x)+R(y)\sin(y))^2\right|^{\frac{\alpha}{2}}}.\\
	\end{split}
\end{equation*}

Since the discussion of $H^\alpha(\varepsilon, W, f)$ is similar to $G^\alpha(\varepsilon, W, f)$ in Section 2, we only sketch the proof of Theorem \ref{thm2}: Notice that $H_2(\varepsilon,f)$ equals $G_2(\varepsilon,f)$ given in \eqref{2-2}, and $H_3(\varepsilon,f)$ equals $-G_3(\varepsilon,f)$ given in \eqref{2-3} when $m=2$. Thus we can use the same method as in Section 2 to prove the continuity of $H^\alpha(\varepsilon, W, f)$ and $\partial_fH^\alpha(\varepsilon, W, f)$. Moreover, one can verify that for each $W\in\mathbb{R}$, $\partial_fH^\alpha(0, W, 0)$ is an isomorphism from $X^{k}_{\log}$ $(\alpha=1)$, or $X^{k+\alpha-1}$ $(1<\alpha<2)$ to $Y^{k-1}_0$ by Lemma \ref{lem2-4} and Lemma \ref{lem2-9}.

Take $W_\alpha(\varepsilon,f)=W_\alpha^*+O(\varepsilon^\alpha)$ with 
$$W_\alpha^*= \frac{\alpha C_\alpha}{2(2d)^{1+\alpha}},$$
such that the range of $H^\alpha(\varepsilon, W_\alpha, f)$ is in $Y^{k-1}_0$. By applying implicit function theorem at $(0,W_\alpha,0)$, we can obtain the existence of travelling global solutions just as in Section 2.

\section{Regularity and convexity}
In this section, we show the regularity of vortices boundary and the convexity of $D_0^\varepsilon$. As mentioned in Section 3, the expression $H^\alpha(\varepsilon, W, f)$ shares the same regular properities  as $G^\alpha(\varepsilon, W, f)$. Therefore, we focus on proving the regularity of co-rotating solutions. The regularity of traveling solutions can be deduced in a same way, thus we omit the proof.
Recall that 
\begin{equation*}
	G^\alpha(\varepsilon, \Omega, f)=G_1+G_2+G_3,
\end{equation*}
where $G_1$, $G_2$ and $G_3$ is given by \eqref{2-1}, \eqref{2-2} and \eqref{2-3} respectively. We will take $k-1$ derivatives and divide the equation into the form 
\begin{equation}\label{4-1}
	L(\partial ^{k-1} f) +S(\partial^{k-1} f)=J(f),
\end{equation}
where $L$ is linear part of the most singular term and $L$ is invertible, $S$ has the same singularity as $L$ but $S$ is small and negligible, and $J$ is the remaining terms regular than $L$ and $S$. Then, we are able to improve the regularity of $f$ using bootstrap due to the difference of singularity on both sides of the equation \eqref{4-1} and the invertibility of $L+S$. For the case $\alpha\in(0,1)$, the proof of $C^\infty$ regularity has no difference with that in \cite{Cas1}. So we will begin with the case $\alpha=1$.

\subsection{The case $\alpha=1$}
This case is different from \cite{Cas1} mainly due to the existence of $\varepsilon$, which makes the linear part $L$ invertiable. We will use the following function space 
\begin{equation*}
	H^{k}_{\log}:=\left\{ g\in H^k,  \ \left\|\int_0^{2\pi}\frac{\partial^kg(x-y)-\partial^kg(x)}{|\sin(\frac{y}{2})|}dy\right\|_{L^2}<\infty \right\}
\end{equation*}
with the norm $||g||_{H^{k}_{\log}}=||g||_{H^k}+ \ \left\|\int_0^{2\pi}\frac{\partial^kg(x-y)-\partial^kg(x)}{|\sin(\frac{y}{2})|}dy\right\|_{L^2}$.

We choose $L$, $S$ and $J$ as follows.

\begin{equation}\label{4-2}
	L(\partial ^{k-1} f)=-C_\alpha\int\!\!\!\!\!\!\!\!\!\; {}-{} \frac{\cos(y)\left((\partial ^{k-1}f)'(x)-(\partial ^{k-1}f)'(x-y)\right)}{\left(4\sin^2\left(\frac{y}{2}\right)\right)^{\frac{1}{2}}}dy,
\end{equation}       
                                                                          
\begin{equation}\label{4-3}
	\begin{split}
	&\quad S(\partial ^{k-1} f)\\
	&=-\Omega\left(\varepsilon^{3}(\partial ^{k-1}f)'(x)-\frac{d\varepsilon^{2}(\partial ^{k-1}f)'(x)\cos(x)}{R(x)}\right)\\
	&-\frac{\varepsilon^{2}f'(x)}{R(x)}\int\!\!\!\!\!\!\!\!\!\; {}-{} \frac{\sin(y)(\partial ^{k-1}f)'(x-y)}{\left|\left(R(x)-R(x-y)\right)^2+4R(x)R(x-y)\sin^2\left(\frac{y}{2}\right)\right|^{\frac{1}{2}}}dy\\
	&-\frac{\varepsilon^{2}(\partial ^{k-1}f)'(x)}{ R(x)}\int\!\!\!\!\!\!\!\!\!\; {}-{} \frac{\sin(y)f'(x-y)+\cos(y)\left(f(x)-f(x-y)\right)}{\left|\left(R(x)-R(x-y)\right)^2+4R(x)R(x-y)\sin^2\left(\frac{y}{2}\right)\right|^{\frac{1}{2}}}dy\\
	&-\sum_{i=1}^{m-1} \frac{\varepsilon^{3}f'(x)}{R(x)} \int\!\!\!\!\!\!\!\!\!\; {}-{} \frac{(\partial ^{k-1}f)'(y)\sin(x-y-\frac{2\pi i}{m})dy}{\left| \left(\boldsymbol{z}(x)-(d,0)\right)-Q_{\frac{2\pi i}{m}}\left(\boldsymbol{z}(y)-(d,0)\right)\right|}\\
	&-\sum_{i=1}^{m-1} \frac{\varepsilon^{3}(\partial ^{k-1}f)'(x)}{ R(x)} \int\!\!\!\!\!\!\!\!\!\; {}-{}\frac{f'(y)\sin(x-y-\frac{2\pi i}{m})+\cos(x-y-\frac{2\pi i}{m})\left(f(x)-f(y)\right)dy}{\left| \left(\boldsymbol{z}(x)-(d,0)\right)-Q_{\frac{2\pi i}{m}}\left(\boldsymbol{z}(y)-(d,0)\right)\right|}\\
	&-\sum_{i=1}^{m-1} \varepsilon\int\!\!\!\!\!\!\!\!\!\; {}-{} \frac{ \cos(x-y-\frac{2\pi i}{m})\left((\partial ^{k-1}f)'(x)-(\partial ^{k-1}f)'(y)\right)dy}{\left| \left(\boldsymbol{z}(x)-(d,0)\right)-Q_{\frac{2\pi i}{m}}\left(\boldsymbol{z}(y)-(d,0)\right)\right|}\\
	&-\int\!\!\!\!\!\!\!\!\!\; {}-{} \frac{\cos(y)\left((\partial ^{k-1}f)'(x)-(\partial ^{k-1}f)'(x-y)\right)}{\left(4\sin^2\left(\frac{y}{2}\right)\right)^{\frac{1}{2}}}\times \left[\frac{1}{\left| \left(\frac{R(x)-R(x-y)}{2\sin\left(\frac{y}{2}\right)}\right)^2+R(x)R(x-y)\right|^{\frac{1}{2}}}-1\right]dy\\
	&=S_1+S_2+S_3+S_4+S_5+S_6+S_7.
	\end{split}
\end{equation}

\begin{equation}\label{4-4}
	\begin{split}
	&\quad J(f)\\
	&=-\Omega\partial ^{k-1}\left(\frac{dR'(x)\cos(x)}{R(x)}+d\sin(x)\right)+\Omega\left(\frac{d\varepsilon^{2}(\partial ^{k-1}f)'(x)\cos(x)}{R(x)}\right)\\
	&+\partial ^{k-1}\left(\frac{f'(x)}{R(x)}\int\!\!\!\!\!\!\!\!\!\; {}-{} \frac{R'(x-y)\sin(y)+[R(x)-R(x-y)]\cos(y)}{\left| \left(R(x)-R(x-y)\right)^2+4R(x)R(x-y)\sin^2\left(\frac{y}{2}\right)\right|^{\frac{1}{2}}}dy\right)\\
	&\quad -\frac{\varepsilon^{2}f'(x)}{R(x)}\int\!\!\!\!\!\!\!\!\!\; {}-{} \frac{\sin(y)(\partial ^{k-1}f)'(x-y)}{\left|\left(R(x)-R(x-y)\right)^2+4R(x)R(x-y)\sin^2\left(\frac{y}{2}\right)\right|^{\frac{1}{2}}}dy\\
	&\quad -\frac{\varepsilon^{2}(\partial ^{k-1}f)'(x)}{R(x)}\int\!\!\!\!\!\!\!\!\!\; {}-{} \frac{\sin(y)f'(x-y)+\cos(y)\left(f(x)-f(x-y)\right)}{\left|\left(R(x)-R(x-y)\right)^2+4R(x)R(x-y)\sin^2\left(\frac{y}{2}\right)\right|^{\frac{1}{2}}}dy\\
	&+\partial ^{k-1}\left(\frac{1}{R(x)\varepsilon^{2}}\int\!\!\!\!\!\!\!\!\!\; {}-{} \frac{R(x-y)\sin(y)}{\left| \left(R(x)-R(x-y)\right)^2+4R(x)R(x-y)\sin^2\left(\frac{y}{2}\right)\right|^{\frac{1}{2}}}dy\right)\\
	&+\partial ^{k-1} \left( \int\!\!\!\!\!\!\!\!\!\; {}-{} \frac{[f'(x-y)-f'(x)]\cos(y)}{\left| \left(R(x)-R(x-y)\right)^2+4R(x)R(x-y)\sin^2\left(\frac{y}{2}\right)\right|^{\frac{1}{2}}} dy \right)\\
	&\quad- \int\!\!\!\!\!\!\!\!\!\; {}-{} \frac{[\partial ^{k}f(x-y)- \partial ^{k}f(x)]\cos(y)}{\left| \left(R(x)-R(x-y)\right)^2+4R(x)R(x-y)\sin^2\left(\frac{y}{2}\right)\right|^{\frac{1}{2}}} dy\\
	&+\partial ^{k-1}G_3\\
	&\quad -\sum_{i=1}^{m-1} \frac{\varepsilon^{3}f'(x)}{R(x)} \int\!\!\!\!\!\!\!\!\!\; {}-{} \frac{(\partial ^{k-1}f)'(y)\sin(x-y-\frac{2\pi i}{m})dy}{\left| \left(\boldsymbol{z}(x)-(d,0)\right)-Q_{\frac{2\pi i}{m}}\left(\boldsymbol{z}(y)-(d,0)\right)\right|}\\
	&\quad -\sum_{i=1}^{m-1} \varepsilon\int\!\!\!\!\!\!\!\!\!\; {}-{} \frac{ \cos(x-y-\frac{2\pi i}{m})\left((\partial ^{k-1}f)'(x)-(\partial ^{k-1}f)'(y)\right)dy}{\left| \left(\boldsymbol{z}(x)-(d,0)\right)-Q_{\frac{2\pi i}{m}}\left(\boldsymbol{z}(y)-(d,0)\right)\right|}\\
	&\quad -\sum_{i=1}^{m-1} \frac{\varepsilon^{3}(\partial ^{k-1}f)'(x)}{R(x)} \int\!\!\!\!\!\!\!\!\!\; {}-{} \frac{f'(y)\sin(x-y-\frac{2\pi i}{m})+\cos(x-y-\frac{2\pi i}{m})\left(f(x)-f(y)\right)dy}{\left| \left(\boldsymbol{z}(x)-(d,0)\right)-Q_{\frac{2\pi i}{m}}\left(\boldsymbol{z}(y)-(d,0)\right)\right|}\\
	&=J_1(f)+J_2(f)+J_3(f)+J_4(f)+J_5(f).
	\end{split}
\end{equation}
\begin{lemma}\label{lem4-1}
	$L$ is linear and invertible and maps $H^{2}_{\log}$ to $H^{1}$.
\end{lemma}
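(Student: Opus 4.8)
The plan is to exploit the translation invariance of $L$ in \eqref{4-2}: since the kernel $\cos(y)/(2|\sin(y/2)|)$ depends only on the increment $y$, the operator $L$ is a Fourier multiplier and diagonalizes in the trigonometric basis. Writing $g=\partial^{k-1}f$ as a generic element of $H^{2}_{\log}$ and noting that $L$ is precisely the second ($\cos(y)$) integral occurring in $\partial_fG^{1}(0,\Omega,0)$, its action on $\cos(jx)$ is exactly the quantity already evaluated in \eqref{2-18} at $\alpha=1$. This sends $\cos(jx)\mapsto\mu_j\sin(jx)$ and $\sin(jx)\mapsto-\mu_j\cos(jx)$ with the single real symbol
\[
\mu_j=\frac{2j}{\pi}\sum_{i=1}^{j}\frac{1}{2i-1}-\frac{2j}{\pi}-\frac{2j}{\pi(4j^{2}-1)}
=\frac{2j}{\pi}\Bigl(\sum_{i=2}^{j}\frac{1}{2i-1}-\frac{1}{4j^{2}-1}\Bigr),
\]
so that for $g=\sum_j c_j\cos(jx)$ one has $Lg=\sum_j \mu_j c_j\sin(jx)$ (and analogously if $g$ carries sine modes).

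The second step is to establish the two–sided bound $|\mu_j|\asymp j(1+\ln j)$ together with nonvanishing. For $\mu_j=0$ one would need $\sum_{i=2}^{j}\frac{1}{2i-1}=\frac{1}{4j^{2}-1}$, which is impossible: the left side is $0$ for $j=1$ and at least $\tfrac13$ for $j\ge 2$, while the right side never exceeds $\tfrac1{15}$; hence $\mu_j\neq 0$ for every $j\ge 1$. For the asymptotics I would invoke Lemma \ref{A-1} (equivalently $\sum_{i=1}^{j}\frac{1}{2i-1}=\tfrac12\ln j+O(1)$) to obtain $\mu_j=\frac{j}{\pi}\ln j+O(j)$, whence $c\,j(1+\ln j)\le|\mu_j|\le C\,j(1+\ln j)$ uniformly in $j\ge 1$.

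With the symbol controlled, both claims reduce to a matching of weighted $\ell^{2}$ norms. By the computation of Lemma \ref{lem2-1} adapted to $H^{2}_{\log}$ one has $\|g\|_{H^{2}_{\log}}^{2}\asymp\sum_j|c_j|^{2}j^{4}(1+\ln j)^{2}$, while $\|u\|_{H^{1}}^{2}\asymp\sum_j|d_j|^{2}j^{2}$ for $u=\sum_j d_j\sin(jx)$. Since $|\mu_j|^{2}j^{2}\asymp j^{4}(1+\ln j)^{2}$, these norms are comparable, giving $\|Lg\|_{H^{1}}\asymp\|g\|_{H^{2}_{\log}}$; thus $L$ is bounded from $H^{2}_{\log}$ to $H^{1}$ and bounded below. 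Surjectivity is then immediate: for $u=\sum_j d_j\sin(jx)\in H^{1}$ the function $g=\sum_j\mu_j^{-1}d_j\cos(jx)$ lies in $H^{2}_{\log}$ with $\|g\|_{H^{2}_{\log}}\le C\|u\|_{H^{1}}$ and $Lg=u$, so $L^{-1}$ is bounded. I would state explicitly the one caveat, that $L$ annihilates constants ($g'\equiv 0$), so the inversion is understood on the mean–zero subspace — which is exactly where $\partial^{k-1}f$ lives, as $f$ has spectrum in $\{j\ge 2\}$.

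The main obstacle is the second step: pinning down the precise logarithmic growth $|\mu_j|\asymp j(1+\ln j)$ and the nonvanishing of $\mu_j$. It is this exact matching between the logarithmic weight built into $H^{2}_{\log}$ and the $\ln j$ factor in the symbol that upgrades $L$ from merely bounded to an isomorphism onto $H^{1}$; this is precisely where the asymptotics of the singular integrals furnished by Lemma \ref{A-1} are essential.
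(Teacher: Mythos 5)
Your proposal is correct and follows essentially the same route as the paper: both diagonalize $L$ as a Fourier multiplier using the explicit evaluation of the singular integrals (Lemma \ref{A-1} / \eqref{2-18}), observe that the symbol is nonvanishing and grows like $j\log j$, and match this against the logarithmic weight in $H^{2}_{\log}$ to invert. Your explicit verification that $\mu_j\neq 0$ and your remark about the constant mode are slightly more careful than the paper's ``it is easy to check,'' but the argument is the same.
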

\begin{proof}
	It is obvious that $L$ is linear and maps $H^{2}_{\log}$ to $H^{1}$. We only need to prove $L$ is invertible. For any $p\in H^1$, without loss of generality, we assume
	\begin{equation*}
		p(x)=\sum_{j=1}^\infty \left(a_j\sin(jx)+b_j\cos(jx)\right).
	\end{equation*}
	We define $q$ to be 
	\begin{equation*}
		q(x)=\sum_{j=1}^\infty \left(\bar{a}_j\sin(jx)+\bar{b}_j\cos(jx)\right),
	\end{equation*}
	where $\bar{a}_j$ and $\bar{b}_j$ is given by
	\begin{align*}
		\bar{a}_j=\frac{b_j}{2j\left(2\sum_{i=1}^j\frac{1}{2i-1}-(2+\frac{1}{2j-1}+\frac{1}{2j+1})\right)},\\
		\bar{b}_j=\frac{b_j}{2j\left(2\sum_{i=1}^j\frac{1}{2i-1}-(2+\frac{1}{2j-1}-\frac{1}{2j+1})\right)}.
	\end{align*}
	It is easy to check that $0<|\bar{a}_j|, |\bar{b}_j|<\infty$ and hence $\bar{a}_j, \bar{b}_j$ are well-defined. Note that $|\bar{a}_j|\sim\frac{|a_j|}{j\log j}$ and $|\bar{b}_j|\sim\frac{|b_j|}{j\log j}$ for $j$ large, thus $q\in H^{2}_{\log}$. Using Lemma 2.6 \cite{Cas1}, we easily deduce $L(q)=p$. The uniqueness of $q$ follows easily.
\end{proof}
\begin{lemma}\label{lem4-2}
	$||S(\partial ^{k-1} f)||_{H^1}\leq C(\varepsilon) ||\partial ^{k-1} f||_{H^{2}_{\log}}$, where $C(\varepsilon) \to0$ as $\varepsilon\to 0$.
\end{lemma}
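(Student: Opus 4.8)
The plan is to estimate the seven pieces $S_1,\dots,S_7$ of $S(\partial^{k-1}f)$ separately, extracting from each the explicit power of $\varepsilon$ that it carries. The starting point is that, since $R(x)=1+\varepsilon|\varepsilon|f(x)$ with $f\in V^r\subset X^k_{\log}$ and $k\ge 3$, we have $\|\partial^i f\|_{L^\infty}\le C\|f\|_{X^k_{\log}}<\infty$ for $i=0,1,2$. Hence, for $\varepsilon$ small, $R$ and $1/R$ are bounded uniformly, $R(x)-1=O(\varepsilon^2)$, and, crucially,
\begin{equation*}
\frac{R(x)-R(x-y)}{2\sin(y/2)}=\frac{\varepsilon|\varepsilon|\,(f(x)-f(x-y))}{2\sin(y/2)}=O(\varepsilon^2)
\end{equation*}
uniformly in $x,y$ because $f'\in L^\infty$. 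Consequently the denominators appearing in $S_2,\dots,S_7$ are comparable to $(4\sin^2(y/2))^{1/2}=2|\sin(y/2)|$ up to a factor $1+O(\varepsilon^2)$.

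The terms $S_1$–$S_6$ are the routine ones. The term $S_1$ is purely local: it is a product of a uniformly bounded smooth coefficient, a factor $\varepsilon^2$ or $\varepsilon^3$, and $(\partial^{k-1}f)'$, so $\|S_1\|_{H^1}\le C\varepsilon^2\|\partial^{k-1}f\|_{H^2}$. In $S_4,S_5,S_6$ the denominator $|(\boldsymbol{z}(x)-(d,0))-Q_{\frac{2\pi i}{m}}(\boldsymbol{z}(y)-(d,0))|$ is bounded below by a positive constant, since the $m$ patches stay separated ($d>1$); thus the corresponding integral operators are smoothing and bounded uniformly in $\varepsilon$, and the explicit prefactors $\varepsilon^3,\varepsilon^3,\varepsilon$ supply the smallness. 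In $S_2$ and $S_3$ the kernel $\sin(y)/((R(x)-R(x-y))^2+4R(x)R(x-y)\sin^2(y/2))^{1/2}$ is bounded, because $\sin(y)=2\sin(y/2)\cos(y/2)$ cancels the singularity of the denominator; together with the prefactor $\varepsilon^2$ this controls $S_2,S_3$ in $H^1$ by $C\varepsilon^2\|\partial^{k-1}f\|_{H^2}$. In every case the operator is bounded from $H^2_{\log}$ into $H^1$ with a norm independent of $\varepsilon$, so the explicit $\varepsilon$-power gives a contribution to $C(\varepsilon)$ tending to $0$.

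The delicate term is $S_7$, which carries no explicit power of $\varepsilon$ and reproduces exactly the singular structure of the model operator $L$. Writing its amplitude as
\begin{equation*}
b(x,y):=\frac{1}{\left|\left(\frac{R(x)-R(x-y)}{2\sin(y/2)}\right)^2+R(x)R(x-y)\right|^{1/2}}-1,
\end{equation*}
the preliminary estimates give $\left(\frac{R(x)-R(x-y)}{2\sin(y/2)}\right)^2+R(x)R(x-y)=1+O(\varepsilon^2)$, and a Taylor expansion of $(1+P)^{-1/2}$ with $P=O(\varepsilon^2)$ yields $\|b\|_{L^\infty}+\|\partial_xb\|_{L^\infty}+\|\partial_yb\|_{L^\infty}\le C\varepsilon^2$; moreover $b$ is $C^1$ in $y$ near $y=0$, so $b(x,y)-b(x,0)=O(\varepsilon^2|y|)$. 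To bound $\|S_7\|_{H^1}$ I would differentiate in $x$. The top-order contribution, in which $\partial_x$ falls on $(\partial^{k-1}f)'(x)-(\partial^{k-1}f)'(x-y)$, is
\begin{equation*}
-\,\frac{1}{2\pi}\int_0^{2\pi}\frac{\cos(y)\left(\partial^{k+1}f(x)-\partial^{k+1}f(x-y)\right)}{2|\sin(y/2)|}\,b(x,y)\,dy,
\end{equation*}
and this is precisely where the $H^2_{\log}$-norm of $\partial^{k-1}f$, rather than its mere $H^2$-norm, is needed. I would split $b(x,y)=b(x,0)+(b(x,y)-b(x,0))$: the first piece is a multiplier of size $O(\varepsilon^2)$ in front of the model $\log$-singular operator, controlled through Lemma \ref{lem2-1} and Lemma \ref{A-1}, while in the second piece the factor $(b(x,y)-b(x,0))/|\sin(y/2)|$ is a bounded amplitude of size $O(\varepsilon^2)$, so the associated operator is dominated by $C\varepsilon^2\|\partial^{k-1}f\|_{H^2}$. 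The lower-order contributions, in which $\partial_x$ hits $b$, are bounded by $C\varepsilon^2\|\partial^{k-1}f\|_{H^2}$ using $\|\partial_xb\|_{L^\infty}\le C\varepsilon^2$.

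I expect $S_7$ to be the main obstacle: one must transfer the mapping property $L:H^2_{\log}\to H^1$ of Lemma \ref{lem4-1} to the perturbed operator with the $y$-dependent amplitude $b$ while keeping the constant proportional to $\|b\|_{L^\infty}=O(\varepsilon^2)$, and in particular control the top-order term by the $\log$-seminorm without losing a derivative. Collecting the bounds for $S_1,\dots,S_7$ then yields $\|S(\partial^{k-1}f)\|_{H^1}\le C(\varepsilon)\|\partial^{k-1}f\|_{H^2_{\log}}$ with $C(\varepsilon)\to 0$ as $\varepsilon\to 0$, which is the assertion of the lemma.
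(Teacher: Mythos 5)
Your proposal is correct and follows essentially the same route as the paper: the terms $S_1$--$S_6$ are dispatched by their explicit $\varepsilon$-prefactors together with the boundedness (or separation of the patches) of the relevant kernels, and $S_7$ is identified as the only delicate term. The only difference is that where the paper simply refers to the treatment of the analogous term $S_2$ in Lemma 4.8 of \cite{Cas1} (p.~978), you spell out that argument yourself — the amplitude $b(x,y)=O(\varepsilon^2)$ with the splitting $b(x,y)=b(x,0)+(b(x,y)-b(x,0))$, using the $\log$-seminorm for the first piece and the gained factor $|y|$ for the second — which is precisely the content of the cited computation.
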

\begin{proof}
	It is obvious $||S_i||_{H^1}\leq C\varepsilon ||f||_{H^{2}_{\log}}$ for $i=1,2,3,4,5,6$.  Note that the remaining term $S_7$ is almost the same as the term $S_2$ in Lemma 4.8 \cite{Cas1}, and hence by very similar argument as in P.978 of \cite{Cas1}, one has $||S_7||_{H^1}\leq C(\varepsilon) ||f||_{H^{2}_{\log}}$ with $C(\varepsilon) \to 0$ as $\varepsilon\to 0$.
\end{proof}

\begin{lemma}\label{lem4-3}
	If $f\in X^{k}_{\log}$, then $J(f)\in H^{1}$.
\end{lemma}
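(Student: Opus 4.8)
The plan is to bound $\|J(f)\|_{H^1}=\|J(f)\|_{L^2}+\|\partial J(f)\|_{L^2}$ term by term on the decomposition $J=J_1+\cdots+J_5$ of \eqref{4-4}, exploiting the fact that $J$ was assembled precisely to collect the contributions that are \emph{one order less singular} than the leading operator $L$ of \eqref{4-2}. The guiding principle is that, by Lemma \ref{lem2-1}, membership $f\in X^k_{\log}$ controls exactly the quantity $\big\|\int_0^{2\pi}\frac{\partial^k f(x-y)-\partial^k f(x)}{|\sin(y/2)|}\,dy\big\|_{L^2}$. Hence it suffices to check that every contribution to $\partial J(f)$ is of one of three harmless types: (a) a product of a factor carrying at most $\partial^k f\in L^2$ with bounded functions of $f,f',\dots$; (b) a singular integral of the form $\int\frac{\partial^k f(x-y)-\partial^k f(x)}{|\sin(y/2)|}(\cdots)\,dy$ with a bounded smooth weight; or (c) a non-singular integral whose kernel stays bounded below.

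First I would dispose of the rotation terms $J_1$. The only danger there is the top-order piece $\partial^{k-1}\!\big(\tfrac{dR'(x)\cos x}{R(x)}\big)$, whose leading Leibniz contribution (all derivatives falling on $R'=\varepsilon|\varepsilon|f'$) is a multiple of $\tfrac{\partial^k f\cos x}{R}$; this is exactly the term cancelled by the correction $\Omega\tfrac{d\varepsilon^{2}(\partial^{k-1}f)'\cos x}{R}$ deliberately placed in $J_1$. After this cancellation the surviving Leibniz terms carry at most $\partial^{k-1}f$ times bounded functions of $f,f',\dots,\partial^{k-1}f$ (using $k\ge 3$, so $f,f',f''\in L^\infty$), and the product rule puts them in $H^1$.

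The heart of the argument is the self-interaction block $J_2,J_3$ coming from $G_2$. Using the Taylor expansion \eqref{2-5} and the comparison $(R(x)-R(x-y))^2+4R(x)R(x-y)\sin^2(\tfrac y2)\sim 4\sin^2(\tfrac y2)$, each denominator behaves like $|\sin(\tfrac y2)|$. The correction terms subtracted inside $J_2$ and $J_3$ are arranged so that, when $\partial^{k-1}$ is distributed over the products $\tfrac{f'}{R}\int(\cdots)$ and over $\int\frac{[f'(x-y)-f'(x)]\cos y}{[\cdots]^{1/2}}\,dy$, the genuinely top-order pieces — those producing $\partial^k f$ against a $|\sin(\tfrac y2)|^{-1}$-type singularity — are siphoned off into $L(\partial^{k-1}f)$ and $S_7$ (recall $C_1=1$ for $\alpha=1$). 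What remains carries at most $\partial^{k-1}f$ inside the singular difference, so its $H^1$ norm is governed by $\int\frac{\partial^k f(x-y)-\partial^k f(x)}{|\sin(y/2)|}\,dy\in L^2$, controlled by Lemma \ref{lem2-1}. I expect this bookkeeping to be the main obstacle: one must trace the full Leibniz expansion and confirm that every occurrence of $\partial^k f$ against a $|\sin(\tfrac y2)|^{-1}$ singularity is matched by a subtracted correction, so that no uncontrolled $\partial^{k+1}f$ term and no bare $\partial^k f/|\sin|$ term (without the balancing difference) is left over; the term $\partial^{k-1}\big(\tfrac{1}{R\varepsilon^{2}}\int\frac{R(x-y)\sin y}{[\cdots]^{1/2}}dy\big)$ is handled by the nonsingular reduction already performed in \eqref{2-6}.

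Finally the cross-interaction terms $J_4=\partial^{k-1}G_3$ and $J_5$ are the easiest. Since the patches $D_i^\varepsilon$ are mutually separated for $d>1$, the denominators $\big|(\boldsymbol z(x)-(d,0))-Q_{\frac{2\pi i}{m}}(\boldsymbol z(y)-(d,0))\big|$ are bounded below by a positive constant (as noted after \eqref{2-9}). Thus these are integrals against kernels that are smooth and bounded together with all their $x$-derivatives; differentiating up to $k$ times yields at worst $\partial^k f(x)$ or $\partial^k f(y)$ against bounded smooth weights, and integrating by parts in $y$ transfers any $y$-derivatives onto the kernel. Consequently $J_4,J_5$ are in fact $C^\infty$, hence in $H^1$. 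Combining the four estimates gives $J(f)\in H^1$, which completes the proof; this is the base step that, together with the invertibility of $L$ (Lemma \ref{lem4-1}) and the smallness of $S$ (Lemma \ref{lem4-2}), will drive the bootstrap to $C^\infty$.
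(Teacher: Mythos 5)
Your proposal follows essentially the same route as the paper: differentiate once more, peel off the model singular kernel $|2\sin(y/2)|^{-1}$ (whose contribution is exactly what the characterization of $X^k_{\log}$ in Lemma \ref{lem2-1} controls) plus a regular remainder, exploit the cancellation built into $J_1$, and use the positive lower bound on the cross-interaction denominators. The only slip is bookkeeping: in \eqref{4-4} the term $J_4$ is the commutator $\partial^{k-1}\bigl(\int\frac{[f'(x-y)-f'(x)]\cos y}{[\cdots]^{1/2}}dy\bigr)-\int\frac{[\partial^k f(x-y)-\partial^k f(x)]\cos y}{[\cdots]^{1/2}}dy$ coming from $G_{22}$ (handled in the paper in the spirit of $J_{22}$), while $\partial^{k-1}G_3$ is $J_5$ --- but since you already treat that commutator inside your $J_2$--$J_3$ discussion, nothing substantive is missing.
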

\begin{proof}
	Taking one more derivative to $J(f)$, we compute the most singular terms for each $g_i$ with $i=1,2,3,4,5$.\\
	1. $$\partial J_1(f)=\Omega\varepsilon^2 \frac{f'(x)\partial^{k}f(x)}{R^2(x)}+l.o.t.$$
	Obviously, $ \partial J_1(f)\in L^2$ and hence $g_1\in H^1$.\\
	2. The most singular terms for $\partial J_2$ are
	\begin{equation*}
		J_{21}=\frac{\varepsilon f'(x)}{ R(x)}\int\!\!\!\!\!\!\!\!\!\; {}-{} \frac{(\partial ^{k}f(x)-\partial ^{k}f(x-y))\cos(y)}{\left| \left(R(x)-R(x-y)\right)^2+4R(x)R(x-y)\sin^2\left(\frac{y}{2}\right)\right|^{\frac{1}{2}}}dy,
	\end{equation*}
	\begin{align*}
		J_{22}&=-\frac{\varepsilon f'(x)}{2 R(x)}\int\!\!\!\!\!\!\!\!\!\; {}-{}  \frac{R'(x-y)\sin(y)+[R(x)-R(x-y)]\cos(y)}{\left| \left(R(x)-R(x-y)\right)^2+4R(x)R(x-y)\sin^2\left(\frac{y}{2}\right)\right|^{\frac{3}{2}}}\\
		&\quad\times\Big( 2(R(x)-R(x-y))(\partial ^{k}f(x)-\partial ^{k}f(x-y)\Big.\\
		&\qquad\left.+4\left(\partial ^{k}f(x)R(x-y)+R(x)\partial ^{k}f(x-y)\right)\sin^2\left(\frac{y}{2}\right)\right)dy,
	\end{align*}
	The kernal in $J_{21}$ can be split into  $$\frac{\cos(y)}{|\left(R(x)-R(x-y)\right)^2+4R(x)R(x-y)\sin^2\left(\frac{y}{2}\right)|^{\frac{1}{2}}}=\frac{1}{(R'(x)^2+R(x)^2)^{\frac{1}{2}}}\frac{1}{\left(4\sin^2\left(\frac{y}{2}\right)\right)^{\frac{1}{2}}}+\mathcal{L}_{21}(x,y),$$
	where $\mathcal{L}(x,y)$ is regular and belongs to $L^2$. Now $J_{21}$ becomes
	\begin{align*}
		J_{21}&=\frac{\varepsilon f'(x)}{ R(x)(R'(x)^2+R(x)^2)^{\frac{1}{2}}}\int\!\!\!\!\!\!\!\!\!\; {}-{}  \frac{(\partial ^{k}f(x)-\partial ^{k}f(x-y))}{\left(4\sin^2\left(\frac{y}{2}\right)\right)^{\frac{1}{2}}}dy\\
		&+\frac{\varepsilon f'(x)}{2\pi R(x)}\int_0^{2\pi} (\partial ^{k}f(x)-\partial ^{k}f(x-y))\mathcal{L}_{21}(x,y) dy.	
	\end{align*}
	Thus, by the defination of $H^{k}_{\log}$, we have $||J_{21}||_{L^2}\leq C||f||_{H^{k}_{\log}}$.
	
	Similarly, by Taylor expansion, we can divide the kernal in the first part of $J_{22}$ into
	\begin{align*}
		&\quad \frac{(R'(x-y)\sin(y)+[R(x)-R(x-y)]\cos(y))(R(x)-R(x-y))}{\left| \left(R(x)-R(x-y)\right)^2+4R(x)R(x-y)\sin^2\left(\frac{y}{2}\right)\right|^{\frac{3}{2}}}\\
		&=\frac{\left(2R'(x)+R'(x)\right)R'(x)}{(R'(x)^2+R(x)^2)^{\frac{3}{2}}}\frac{1}{\left(4\sin^2\left(\frac{y}{2}\right)\right)^{\frac{1}{2}}}+{\mathcal{L}}_{22}(x,y),
	\end{align*}
	where ${\mathcal{L}}(x,y)$ is regular and belongs to $L^2$.  Thus, the first part of $J_{22}$ belongs to $L^2$.
	As for the remaining part of $J_{22}$, it is enough to notice that 
	$$\left|\left|\frac{\left(R'(x-y)\sin(y)+[R(x)-R(x-y)]\cos(y)\right)\sin^2\left(\frac{y}{2}\right)}{\left| \left(R(x)-R(x-y)\right)^2+4R(x)R(x-y)\sin^2\left(\frac{y}{2}\right)\right|^{\frac{3}{2}}}\right|\right|_{L^\infty}\leq C. $$
	Thus, we conclude that $J_{22}\in L^2$, which implies $J_2\in H^1$.\\
	3. The most singular terms in $\partial J_3$ are 
	$$J_{31}=-\frac{\partial^kf(x)}{ R(x)^2}\int\!\!\!\!\!\!\!\!\!\; {}-{}  \frac{R(x-y)\sin(y)}{\left| \left(R(x)-R(x-y)\right)^2+4R(x)R(x-y)\sin^2\left(\frac{y}{2}\right)\right|^{\frac{1}{2}}}dy,$$
	$$J_{32}=\frac{1}{ R(x)}\int\!\!\!\!\!\!\!\!\!\; {}-{}  \frac{\partial^kf(x-y)\sin(y)}{\left| \left(R(x)-R(x-y)\right)^2+4R(x)R(x-y)\sin^2\left(\frac{y}{2}\right)\right|^{\frac{1}{2}}}dy$$
	\begin{align*}
		J_{33}&=-\frac{1}{2 R(x)}\int\!\!\!\!\!\!\!\!\!\; {}-{}  \frac{R(x-y)\sin(y)}{\left| \left(R(x)-R(x-y)\right)^2+4R(x)R(x-y)\sin^2\left(\frac{y}{2}\right)\right|^{\frac{3}{2}}}\\&\quad\times\Big( 2(R(x)-R(x-y))(\partial ^{k}f(x)-\partial ^{k}f(x-y)\Big.\\
		&\qquad\left.+4\left(\partial ^{k}f(x)R(x-y)+R(x)\partial ^{k}f(x-y)\right)\sin^2\left(\frac{y}{2}\right)\right)dy.
	\end{align*}
	The kernal $\frac{\sin(y)}{\left| \left(R(x)-R(x-y)\right)^2+4R(x)R(x-y)\sin^2\left(\frac{y}{2}\right)\right|^{\frac{1}{2}}}$ in $J_{31}$ and $J_{32}$ is regular, thus $J_{31}, J_{32}\in L^2$. $J_{33}$ is more regular than $J_{22}$ and hence belongs to $L^2$.\\
	4. The most singular term for $\partial J_4$ is 
	\begin{align*}
		\partial J_{4}&=-\frac{C_\alpha}{4\pi}\int_0^{2\pi} \frac{(f'(x-y)-f'(x))\cos(y)}{\left| \left(R(x)-R(x-y)\right)^2+4R(x)R(x-y)\sin^2\left(\frac{y}{2}\right)\right|^{\frac{3}{2}}}\\&\quad\times\Big( 2(R(x)-R(x-y))(\partial ^{k}f(x)-\partial ^{k}f(x-y)\Big.\\
		&\qquad\left.+4\left(\partial ^{k}f(x)R(x-y)+R(x)\partial ^{k}f(x-y)\right)\sin^2\left(\frac{y}{2}\right)\right)dy+l.o.t,
	\end{align*}
	which can also be considered in the similar spirit of $J_{22}$.\\
	5. The highest derivative of $f$ which appears in $\partial J_5$ is $\partial^k f$, and for $i\ge1$, the denominator $\left| \left(\boldsymbol{z}(x)-(d,0)\right)-Q_{\frac{2\pi i}{m}}\left(\boldsymbol{z}(y)-(d,0)\right)\right|^\alpha$ has a positive lower bound. Then $\partial J_5\in L^2$ follows easily.
\end{proof}

If we combine the above lemmas, we obtain the following regularity theorem.
\begin{corollary}\label{coro4-4}
	If $f\in H^{k}_{\log}$ solves $G^\alpha(\varepsilon, \Omega, f)=0$ for some $\varepsilon$ small and $\Omega$, then $f\in H^{k+1}_{\log}$ for any $k\geq 3$.
\end{corollary}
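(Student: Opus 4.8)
The plan is to use the hypothesis $G^\alpha(\varepsilon,\Omega,f)=0$ in its differentiated form. Applying $\partial^{k-1}$ to the constraint and regrouping the resulting terms exactly as in \eqref{4-2}--\eqref{4-4} turns it into the identity \eqref{4-1}, in which $L$ isolates the full log-singular part of the linearization, $S$ retains the same order of singularity but carries explicit powers of $\varepsilon$, and $J$ collects the strictly more regular remainder. The mechanism driving the gain of one derivative is that $J(f)$ sits one Sobolev degree above the naive image of $L(\partial^{k-1}f)$, so that inverting $L+S$ should promote $\partial^{k-1}f$ from $H^{1}_{\log}$ to $H^{2}_{\log}$.

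First I would show that $L+S\colon H^{2}_{\log}\to H^{1}$ is invertible once $\varepsilon$ is small. By Lemma \ref{lem4-1}, $L$ is a bounded bijection from $H^{2}_{\log}$ onto $H^{1}$ with bounded inverse; on the Fourier side it multiplies the $j$-th mode by a factor comparable to $j\log j$, which is bounded away from zero. By Lemma \ref{lem4-2}, $S$ is bounded from $H^{2}_{\log}$ to $H^{1}$ with operator norm $C(\varepsilon)\to0$ as $\varepsilon\to0$. Writing $L+S=L\left(I+L^{-1}S\right)$ and choosing $\varepsilon$ so small that $\|L^{-1}S\|<1$, the factor $I+L^{-1}S$ is invertible by Neumann series, whence $(L+S)^{-1}=\left(I+L^{-1}S\right)^{-1}L^{-1}$ maps $H^{1}$ boundedly onto $H^{2}_{\log}$.

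Next I would place the right-hand side in the correct space. The solution $f$ produced in Section 2 belongs to $X^{k}_{\log}\subset H^{k}_{\log}$, so Lemma \ref{lem4-3} gives $J(f)\in H^{1}$, and therefore $\tilde g:=(L+S)^{-1}J(f)$ is a well-defined element of $H^{2}_{\log}$. It remains to identify $\tilde g$ with $\partial^{k-1}f$. From $f\in H^{k}_{\log}$ we know a priori only that $g:=\partial^{k-1}f\in H^{1}_{\log}$, on which $L$ and $S$ still act boundedly into $L^{2}$ (the same estimates as in Lemmas \ref{lem4-1}--\ref{lem4-2}, shifted down by one derivative), so the differentiated constraint reads $(L+S)g=J(f)=(L+S)\tilde g$ as an $L^{2}$ identity. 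Since the multiplier defining $L$ never vanishes and $S$ is small, $L+S$ remains injective on $H^{1}_{\log}$; hence $g=\tilde g\in H^{2}_{\log}$, which is exactly $f\in H^{k+1}_{\log}$.

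The substantial analytic work is already discharged by Lemmas \ref{lem4-2} and \ref{lem4-3}, namely the smallness of $S$ and the $L^{2}$ control of the most singular pieces $J_{21},J_{22}$ of $J$. The genuinely delicate point in the assembly is the identification step just described: the bound of Lemma \ref{lem4-2} presupposes $\partial^{k-1}f\in H^{2}_{\log}$, i.e.\ the very conclusion, so one cannot read off the regularity by simply inserting the unknown into $S$. I expect this apparent circularity to be the main obstacle, and I would circumvent it precisely as above---solving $(L+S)\tilde g=J(f)$ in $H^{2}_{\log}$ on its own and then matching $\tilde g$ to the a priori $H^{1}_{\log}$ object $\partial^{k-1}f$ through injectivity of $L+S$, rather than through any a priori $H^{2}_{\log}$ bound on $\partial^{k-1}f$.
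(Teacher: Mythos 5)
Your argument is correct and is essentially the paper's own proof: the paper disposes of this corollary in a single line (``combine the above lemmas''), and the intended assembly is precisely your Neumann-series inversion of $L+S\colon H^{2}_{\log}\to H^{1}$ (Lemmas \ref{lem4-1} and \ref{lem4-2}) applied to $J(f)\in H^{1}$ (Lemma \ref{lem4-3}). Your explicit resolution of the apparent circularity---solving $(L+S)\tilde g=J(f)$ in $H^{2}_{\log}$ first and then identifying $\tilde g$ with the a priori $H^{1}_{\log}$ object $\partial^{k-1}f$ through injectivity of $L+S$ one level down---is a detail the paper leaves implicit, and you handle it correctly.
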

In view of the above theorem, by bootstrap argument, we eventually obtain the regularity of solution constracted in Section 2.1.

\begin{theorem}\label{thm4-5}
   When $\alpha=1$, the solution $f$ to $G^\alpha(\varepsilon, \Omega, f)=0$ lies in $H^k$ for any $k\geq 3$ and hence $f\in C^\infty$.
\end{theorem}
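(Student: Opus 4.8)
The plan is to obtain Theorem~\ref{thm4-5} as a pure bootstrap, feeding the one-derivative gain of Corollary~\ref{coro4-4} into an induction on $k$. The solution $f$ constructed in Section~2.1 for $\alpha=1$ lives in the neighborhood $V^r\subset X^{k}_{\log}$ at the base exponent $k=3$, and since $X^{k}_{\log}$ is exactly $H^{k}_{\log}$ restricted to the cosine modes $\{\cos(jx)\}_{j\ge 2}$, we have the inclusion $X^{k}_{\log}\subset H^{k}_{\log}$. Hence $f\in H^{3}_{\log}$ and $f$ solves $G^{1}(\varepsilon,\Omega,f)=0$ for the same small $\varepsilon$ and the selected angular velocity $\Omega=\Omega_1(\varepsilon,f)$.

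First I would emphasize that Corollary~\ref{coro4-4} is \emph{self-improving}: both its hypothesis (``$f\in H^{k}_{\log}$ with $G^{1}(\varepsilon,\Omega,f)=0$'') and its conclusion (``$f\in H^{k+1}_{\log}$'') are phrased in the same $H_{\log}$ scale, and the equation is untouched by the upgrade because $f$ itself is never modified. Starting from $f\in H^{3}_{\log}$, one application gives $f\in H^{4}_{\log}$; since $4\ge 3$ the hypotheses hold again with $k=4$, producing $f\in H^{5}_{\log}$, and so forth. A straightforward induction then yields $f\in H^{k}_{\log}$ for every $k\ge 3$.

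To conclude, I would combine the embedding $H^{k}_{\log}\subset H^{k}$ with the one-dimensional Sobolev embedding $H^{k}\hookrightarrow C^{k-1}$ for periodic functions, so that $f\in\bigcap_{k\ge 3}H^{k}=C^{\infty}$. This gives the $C^\infty$ regularity of the boundary parametrization $\boldsymbol z(x)=(\varepsilon R(x)\cos x,\varepsilon R(x)\sin x)$ with $R=1+\varepsilon|\varepsilon| f$, and, as noted at the beginning of Section~4, the identical argument applied to $H^{1}$ yields the same conclusion for the travelling solution pairs.

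I do not expect a substantive obstacle inside this theorem itself: all the analytic effort is already spent in Corollary~\ref{coro4-4}, which rests on the invertibility of the linear part $L$ (Lemma~\ref{lem4-1}), the smallness of $S$ for small $\varepsilon$ so that $L+S$ remains invertible (Lemma~\ref{lem4-2}), and the fact that $J$ is exactly one order more regular than $L$ (Lemma~\ref{lem4-3}). The only point requiring care is verifying that the improvement gains precisely one derivative and lands in the same space appearing in the hypothesis, so that the induction on $k$ closes without loss; this is guaranteed by the matching $H^{k}_{\log}$ scale in the statement of the corollary.
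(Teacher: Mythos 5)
Your proposal is correct and follows exactly the paper's route: Theorem \ref{thm4-5} is obtained by iterating the one-derivative gain of Corollary \ref{coro4-4} starting from the solution $f\in X^{k}_{\log}\subset H^{k}_{\log}$ constructed in Section 2.1, and the paper itself only says ``by bootstrap argument.'' Your write-up merely makes the induction and the final Sobolev embedding explicit, which is a faithful elaboration of the intended proof.
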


\subsection{The case $1<\alpha<2$}
We choose $L$, $S$ and $J$ as follows.

\begin{equation}\label{4-5}
	L(\partial ^{k-1} f)=-C_\alpha\int\!\!\!\!\!\!\!\!\!\; {}-{} \frac{(\partial ^{k-1}f)'(x)-(\partial ^{k-1}f)'(x-y)}{\left(4\sin^2\left(\frac{y}{2}\right)\right)^{\frac{\alpha}{2}}}dy,
\end{equation}           
                                                                      
\begin{equation}\label{4-6}
	S(\partial ^{k-1} f)=-C_\alpha\int\!\!\!\!\!\!\!\!\!\; {}-{} \frac{(\partial ^{k-1}f)'(x)-(\partial ^{k-1}f)'(x-y)}{\left(4\sin^2\left(\frac{y}{2}\right)\right)^{\frac{\alpha}{2}}}\times \left[\frac{1}{\left( R'(x)^2+R(x)^2\right)^{\frac{\alpha}{2}}}-1\right]dy,
\end{equation}

\begin{equation}\label{4-7}
	\begin{split}
	J(f)&=\Omega\left(\varepsilon \partial ^{k}R(x)-\partial ^{k-1}\left(\frac{dR'(x)\cos(x)}{R(x)}+d\sin(x)\right)\right)\\
	&+\partial ^{k-1}\left(\frac{C_\alpha f'(x)}{R(x)}\int\!\!\!\!\!\!\!\!\!\; {}-{} \frac{R'(x-y)\sin(y)+[R(x)-R(x-y)]\cos(y)}{\left| \left(R(x)-R(x-y)\right)^2+4R(x)R(x-y)\sin^2\left(\frac{y}{2}\right)\right|^{\frac{\alpha}{2}}}dy\right)\\
	&+\partial ^{k-1}\left(\frac{C_\alpha}{ R(x)\varepsilon|\varepsilon|^{\alpha}}\int\!\!\!\!\!\!\!\!\!\; {}-{} \frac{R(x-y)\sin(y)}{\left| \left(R(x)-R(x-y)\right)^2+4R(x)R(x-y)\sin^2\left(\frac{y}{2}\right)\right|^{\frac{\alpha}{2}}}dy\right)\\
	&+\partial ^{k-1} \left( C_\alpha\int\!\!\!\!\!\!\!\!\!\; {}-{} \frac{[f'(x-y)-f'(x)]\cos(y)}{\left| \left(R(x)-R(x-y)\right)^2+4R(x)R(x-y)\sin^2\left(\frac{y}{2}\right)\right|^{\frac{\alpha}{2}}} dy \right)\\
	&\quad- C_\alpha\int\!\!\!\!\!\!\!\!\!\; {}-{} \frac{[\partial ^{k}f(x-y)- \partial ^{k}f(x)]\cos(y)}{\left| \left(R(x)-R(x-y)\right)^2+4R(x)R(x-y)\sin^2\left(\frac{y}{2}\right)\right|^{\frac{\alpha}{2}}} dy\\
	&+C_\alpha\int\!\!\!\!\!\!\!\!\!\; {}-{} \frac{[\partial ^{k}f(x-y)- \partial ^{k}f(x)]}{\left(4\sin^2\left(\frac{y}{2}\right)\right)^{\frac{\alpha}{2}}} \times\left[ \frac{\cos(y)}{\left| \left(\frac{R(x)-R(x-y)}{2\sin\left(\frac{y}{2}\right)}\right)^2+4R(x)R(x-y)\right|^{\frac{\alpha}{2}}} - \frac{1}{\left| 4(R'(x)^2+R(x)^2)\right|^{\frac{\alpha}{2}}}  \right]dy\\
	&+\partial ^{k-1}G_3\\
	&=J_1(f)+J_2(f)+J_3(f)+J_4(f)+J_5(f)+J_6(f).
	\end{split}
\end{equation}

\begin{lemma}[Lemma 4.5, \cite{Cas1}]\label{lem4-6}
	$L$ is linear and invertible and maps $H^{2\alpha-1}$ to $H^{\alpha-1}$ .
\end{lemma}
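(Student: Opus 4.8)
The plan is to diagonalize $L$ in the Fourier basis, read off its symbol, and obtain the mapping property and the invertibility simultaneously from two-sided bounds on that symbol. Linearity is immediate from \eqref{4-5}. For the rest, note that the kernel $\left(4\sin^2(y/2)\right)^{-\alpha/2}$ is even in $y$; testing $L$ on $g(x)=e^{ijx}$ and discarding the odd part of the integrand therefore gives
\begin{equation*}
	L(e^{ijx}) = -iC_\alpha\, j\, m_j\, e^{ijx}, \qquad m_j := \frac{1}{2\pi}\int_0^{2\pi}\frac{1-\cos(jy)}{\left(4\sin^2(y/2)\right)^{\frac{\alpha}{2}}}\,dy ,
\end{equation*}
equivalently $L(\cos(jx))=C_\alpha j m_j\sin(jx)$ and $L(\sin(jx))=-C_\alpha j m_j\cos(jx)$. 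Thus $L$ is the Fourier multiplier with symbol $\lambda_j=-iC_\alpha j m_j$: it carries the cosine tower onto the sine tower and annihilates only the constant mode $j=0$.

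The crux is the size of $m_j$. First, $m_j>0$ for every $j\ge 1$, since the integrand is nonnegative and not identically zero. Second, the very computation already carried out in \eqref{2-16}--\eqref{2-18} for Lemma \ref{lem2-9} (equivalently Lemma \ref{A-1}) evaluates this integral in closed form, yielding an expression of the shape
\begin{equation*}
	m_j = c_\alpha\left(\text{const}-\frac{\Gamma(j+\frac{\alpha}{2})}{\Gamma(1+j-\frac{\alpha}{2})}\right),
\end{equation*}
and Stirling's formula gives $\Gamma(j+\tfrac\alpha2)/\Gamma(1+j-\tfrac\alpha2)\sim j^{\alpha-1}$. Since $\alpha-1>0$, this term dominates the constant, so $m_j\asymp j^{\alpha-1}$ as $j\to\infty$, with two-sided bounds $c_1 j^{\alpha-1}\le m_j\le c_2 j^{\alpha-1}$ on $j\ge 1$. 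Consequently $|\lambda_j|=C_\alpha j m_j\asymp j^{\alpha}$ and $\lambda_j\neq 0$ for all $j\ge1$. This asymptotic analysis is the only substantive step; everything else is Plancherel bookkeeping.

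Boundedness then follows by Plancherel: for $g=\sum_j\hat g_j e^{ijx}$,
\begin{equation*}
	\|Lg\|_{H^{\alpha-1}}^2 = \sum_{j}|\lambda_j|^2(1+j^2)^{\alpha-1}|\hat g_j|^2 \le C\sum_{j}j^{2\alpha}(1+j^2)^{\alpha-1}|\hat g_j|^2 \le C\sum_{j}(1+j^2)^{2\alpha-1}|\hat g_j|^2 = C\|g\|_{H^{2\alpha-1}}^2 ,
\end{equation*}
so $L\colon H^{2\alpha-1}\to H^{\alpha-1}$ is bounded. For invertibility, define $L^{-1}$ through the reciprocal multiplier $\lambda_j^{-1}$ on $j\ge1$; since $|\lambda_j^{-1}|\le Cj^{-\alpha}$, the same estimate run in reverse shows $L^{-1}\colon H^{\alpha-1}\to H^{2\alpha-1}$ is bounded. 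Injectivity is precisely $\lambda_j\neq0$ ($j\ge1$) and surjectivity is the explicit construction of the preimage. Thus $L$ is an isomorphism once the constant mode is discarded, i.e. on the mean-zero subspace, which is the only setting used in \eqref{4-1} where $L$ acts on $\partial^{k-1}f$ whose Fourier series carries no zeroth mode.

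The main obstacle is entirely concentrated in the multiplier analysis $m_j\asymp j^{\alpha-1}$ together with the strict positivity $m_j>0$: these two facts simultaneously deliver the order-$\alpha$ smoothing deficit (hence exactly the pair $H^{2\alpha-1}\to H^{\alpha-1}$) and the absence of a kernel. Positivity is elementary from the nonnegativity of the integrand, while the sharp growth rate is what the Gamma-function identity \eqref{2-16} is designed to supply, so no genuinely new estimate is needed beyond what was already assembled for Lemma \ref{lem2-9}.
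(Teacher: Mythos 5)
Your proof is correct and follows essentially the same route as the cited source \cite{Cas1} and the paper's own $\alpha=1$ analogue (Lemma \ref{lem4-1}): diagonalize $L$ in the Fourier basis, identify the multiplier with the Gamma-function expression of Lemma \ref{A-1} so that $|\lambda_j|\asymp j^{\alpha}$, and read off boundedness and invertibility from Plancherel. Your remark that invertibility is meant on the mean-zero subspace (the only setting in which $L$ is applied in \eqref{4-1}) is the correct reading of the statement.
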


\begin{lemma}\label{lem4-7}
	$||S(\partial ^{k-1} f)||_{H^{\alpha-1}}\leq C_\varepsilon ||\partial ^{k-1} f||_{H^{2\alpha-1}}$, where $C(\varepsilon) \to 0$ as $\varepsilon\rightarrow 0$.
\end{lemma}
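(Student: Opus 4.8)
The plan is to exploit the structure of \eqref{4-6}: the bracketed factor there depends only on $x$, not on the integration variable $y$, so $S$ is nothing but the operator $L$ of \eqref{4-5} multiplied by a small $x$-dependent coefficient. First I would pull the bracket outside the (mean-value) integral to obtain the pointwise factorization
\begin{equation*}
	S(\partial^{k-1}f)(x)=m(x)\,L(\partial^{k-1}f)(x),\qquad m(x):=\frac{1}{\left(R'(x)^2+R(x)^2\right)^{\frac{\alpha}{2}}}-1.
\end{equation*}
This is legitimate because the difference quotient in \eqref{4-6} is integrable in $y$ (its numerator vanishes to first order at $y=0$ while $\alpha<2$), and it reduces the lemma to a product estimate in $H^{\alpha-1}$ combined with the boundedness of $L$ already recorded in Lemma \ref{lem4-6}.

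Next I would estimate the multiplier $m$. Since $R(x)=1+\varepsilon|\varepsilon|^\alpha f(x)$ gives $R'(x)=\varepsilon|\varepsilon|^\alpha f'(x)$, one has
\begin{equation*}
	R'(x)^2+R(x)^2=1+2\varepsilon|\varepsilon|^\alpha f(x)+\varepsilon^2|\varepsilon|^{2\alpha}\left(f(x)^2+f'(x)^2\right).
\end{equation*}
For $f\in V^r$ with $k\ge 3$ we have $\|f\|_{C^2}\le C\|f\|_{X^{k+\alpha-1}}<Cr$ by Sobolev embedding, so for $\varepsilon$ small the quantity $R'(x)^2+R(x)^2$ stays in a compact subinterval of $(0,\infty)$ on which $t\mapsto t^{-\alpha/2}$ is smooth. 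Since $m$ vanishes identically at $\varepsilon=0$, a Taylor expansion of $t\mapsto t^{-\alpha/2}$ (equivalently, formula \eqref{2-5}) yields
\begin{equation*}
	\|m\|_{C^1}=\|m\|_{L^\infty}+\|m'\|_{L^\infty}\le C|\varepsilon|^{1+\alpha}\,P\!\left(\|f\|_{C^2}\right)\le C(r)\,|\varepsilon|^{1+\alpha},
\end{equation*}
with $P$ a polynomial; here only finitely many derivatives of $f$ enter, all controlled because $k\ge 3$.

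Finally I would combine these facts. By Lemma \ref{lem4-6}, $\|L(\partial^{k-1}f)\|_{H^{\alpha-1}}\le C\|\partial^{k-1}f\|_{H^{2\alpha-1}}$. Since $\alpha-1\in(0,1)$, multiplication by a $C^1$ function is bounded on $H^{\alpha-1}(\mathbb{T})$: splitting the Gagliardo difference as $m(x)g(x)-m(x')g(x')=m(x)\big(g(x)-g(x')\big)+\big(m(x)-m(x')\big)g(x')$ and using $\|m\|_{L^\infty}$ on the first piece together with the Lipschitz bound $|m(x)-m(x')|\le\|m'\|_{L^\infty}|x-x'|$ on the second gives
\begin{equation*}
	\|m\,g\|_{H^{\alpha-1}}\le C\left(\|m\|_{L^\infty}+\|m\|_{\mathrm{Lip}}\right)\|g\|_{H^{\alpha-1}}.
\end{equation*}
Applying this with $g=L(\partial^{k-1}f)$ and inserting the bound on $\|m\|_{C^1}$ yields $\|S(\partial^{k-1}f)\|_{H^{\alpha-1}}\le C|\varepsilon|^{1+\alpha}\|\partial^{k-1}f\|_{H^{2\alpha-1}}$, so that $C(\varepsilon)=C|\varepsilon|^{1+\alpha}\to 0$ as $\varepsilon\to0$, as claimed. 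The main obstacle is precisely this fractional-order product estimate: because $\alpha-1$ is not an integer, $H^{\alpha-1}$ is not itself an algebra, and one must verify carefully — via the Gagliardo seminorm on the torus — that multiplication by the rough-but-small coefficient $m$ is bounded with operator norm controlled by a finite $C^N$ norm of $m$, since this is exactly what transfers the smallness of $m$ into the smallness of $C(\varepsilon)$.
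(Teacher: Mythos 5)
Your proof is correct and follows essentially the same route as the paper: both isolate the $x$-dependent bracket $s_2(x)=\left(R'(x)^2+R(x)^2\right)^{-\alpha/2}-1$, show via the mean value theorem that it is of size $O(|\varepsilon|^{1+\alpha})$, and conclude with a product estimate in $H^{\alpha-1}$. The only difference is one of packaging: the paper measures $s_2$ in $H^{3/2}$ and cites Lemma 4.2 of \cite{Cas1}, whereas you factor $S=s_2\cdot L(\partial^{k-1}f)$ explicitly, invoke Lemma \ref{lem4-6} for $L$, and prove the needed $H^{\alpha-1}$ multiplication bound by hand through the Gagliardo seminorm --- both are valid and yield $C(\varepsilon)=O(|\varepsilon|^{1+\alpha})$.
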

\begin{proof}
	Denote $$s_1(x,y)= C_\alpha \frac{(\partial ^{k-1}f)'(x)-(\partial ^{k-1}f)'(x-y)}{\left(4\sin^2\left(\frac{x-y}{2}\right)\right)^{\frac{\alpha}{2}}},$$ and $$s_2(x)=\frac{1}{\left( R'(x)^2+R(x)^2\right)^{\frac{\alpha}{2}}}-1.$$
	By the mean value theorem, we have 
	\begin{align*}
		s_2(x)&=-\frac{\alpha}{2}\delta^{-1-\frac{\alpha}{2}}\left(R'(x)^2+R(x)^2-1\right)\\
		&=-\frac{\alpha}{2}\delta^{-1-\frac{\alpha}{2}}\varepsilon|\varepsilon|^{\alpha}\left(2f(x)+\varepsilon|\varepsilon|^{\alpha}\left(f'(x)^2+f(x)^2\right)\right),
	\end{align*}
	where $\delta$ is between 1 and $R'(x)^2+R(x)^2$.
	Thus, we deduce
	\begin{equation*}
		||s_2(x)||_{L^\infty}\leq C|\varepsilon|^{1+\alpha},
	\end{equation*}
	where $C$ is a constant depending on $\alpha$ and $||f||_{C^1}$.  Note that by imbedding theorem, it holds $||f||_{C^1}\leq C||f||_{H^2}< \infty.$
	Direct calculation gives
	\begin{equation*}
		||s_2(x)||_{H^{\frac{3}{2}}}\leq C|\varepsilon|^{1+\alpha},
	\end{equation*}
	where $C$ is a constant depending on $\alpha$ and $||f||_{H^{\frac{5}{2}}}\leq ||f||_{H^3}\leq 1$.
	Using Lemma 4.2 \cite{Cas1} with $\sigma=\frac{1}{2}+2-\alpha$, we obtain
	$||S(\partial ^{k-1} f)||_{H^{\alpha-1}}\leq C(\varepsilon) ||\partial ^{k-1} f||_{H^{2\alpha-1}}$ for some $C(\varepsilon)$ satisfying $C(\varepsilon) \to 0$ as $\varepsilon\to 0$.	
\end{proof}

\begin{lemma}\label{lem4-8}
	If $f\in H^{k+\alpha-1}$, then $J(f)\in H^{\alpha-1}$.
\end{lemma}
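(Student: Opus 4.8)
The plan is to estimate each of the six pieces $J_1,\dots,J_6$ in \eqref{4-7} separately and prove a bound of the form $\|J_i(f)\|_{H^{\alpha-1}}\le C\|f\|_{H^{k+\alpha-1}}$. The guiding principle is that the genuinely order-$\alpha$ part of $\partial^{k-1}G^\alpha$ has already been isolated into $L+S$, so that every surviving term in $J$ should be morally one derivative smoother than the left-hand side; making this precise is the whole content of the lemma, since it is what feeds the bootstrap $(L+S)(\partial^{k-1}f)=J(f)\in H^{\alpha-1}\Rightarrow\partial^{k-1}f\in H^{2\alpha-1}$ via Lemma \ref{lem4-6} and Lemma \ref{lem4-7}. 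Two standing facts would be used throughout: since $f\in H^{k+\alpha-1}$ we have $\partial^kf\in H^{\alpha-1}$, and since $0<\alpha-1<1$ the space $H^{\alpha-1}$ is stable under multiplication by the smooth bounded coefficients $R,1/R,f',\cos x,\dots$ that occur (here $f'\in C^{k-1}$ by Sobolev embedding for $k\ge3$). The quantitative tools behind every bound are the characterization of Lemma \ref{lem2-6} and the kernel estimate Lemma 4.2 of \cite{Cas1}.

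The easy terms are $J_1$, $J_6=\partial^{k-1}G_3$, and the far-interaction contributions. In $J_1$ the top derivative enters only as $\partial^kR=\varepsilon|\varepsilon|^\alpha\partial^kf$ and through $\partial^{k-1}(R'\cos x/R)$, i.e. as $\partial^kf\in H^{\alpha-1}$ times smooth factors. In $G_3$ (hence $J_6$) the denominators $|(\boldsymbol z(x)-(d,0))-Q_{\frac{2\pi i}{m}}(\boldsymbol z(y)-(d,0))|^\alpha$ stay bounded below for $i\ge1$, so differentiating produces only $\partial^kf$ against smooth bounded kernels. All of these land in $H^{\alpha-1}$ directly by the product rule.

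The substance is in the self-interaction terms $J_2,J_3,J_4,J_5$, where I would take $\partial^{k-1}$, isolate the most singular contributions (those carrying $\partial^kf$) exactly as in the proof of Lemma \ref{lem4-3}, and split each kernel $\cos(y)\,|(R(x)-R(x-y))^2+4R(x)R(x-y)\sin^2(y/2)|^{-\alpha/2}$ into the leading piece $(R'(x)^2+R(x)^2)^{-\alpha/2}(4\sin^2(y/2))^{-\alpha/2}$ plus a regular, square-integrable remainder $\mathcal L(x,y)$. Where the top derivative enters through the \emph{numerator} (as in $J_2$, $J_3$), I would write $\partial^kf(x-y)=\partial^kf(x)-(\partial^kf(x)-\partial^kf(x-y))$: the $\partial^kf(x)$ part multiplies a smooth function of $x$ obtained by integrating the odd kernel $\sin(y)\,|\cdots|^{-\alpha/2}$ and so stays in $H^{\alpha-1}$, while the difference part, thanks to the oddness of $\sin y$, is hit by an operator strictly smoothing on $\partial^kf$ and hence produces an element of a space contained in $H^{\alpha-1}$. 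The term $J_4$ is designed so that the leading contribution $C_\alpha\int(\partial^kf(x-y)-\partial^kf(x))\cos(y)\,|\cdots|^{-\alpha/2}\,dy$ coming from $\partial^{k-1}$ of its first integral is cancelled \emph{exactly} by its second integral, leaving only terms with at most $\partial^{k-1}f\in H^\alpha$ in the difference quotient. Finally, the bracketed kernel in $J_5$ is the difference between the regularized kernel and its value at $y=0$, hence it vanishes as $y\to0$; this lowers the order of the resulting difference-quotient operator below $\alpha-1$, so it maps $\partial^kf\in H^{\alpha-1}$ into a space contained in $H^{\alpha-1}$.

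The main obstacle is precisely this last class of estimates: verifying that the difference-quotient operators carrying the top derivative $\partial^kf$, after the cancellation in $J_4$ and the kernel regularization in $J_5$, genuinely act as operators of order $\le 0$ on $H^{\alpha-1}$ rather than merely landing in $L^2$. This is where the structural features — the oddness of $\sin y$, the extra $\sin^2(y/2)$ factors in the numerators, and the vanishing of the subtracted bracket — must be converted into genuine gains of regularity. I would make these gains rigorous through Lemma \ref{lem2-6} and the Hölder/commutator estimates of Lemma 4.2 in \cite{Cas1}, mirroring the bookkeeping already carried out for $\alpha=1$ in Lemma \ref{lem4-3}.
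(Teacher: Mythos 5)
Your proposal is correct and follows essentially the same route as the paper, which handles $J_1$ directly, defers $J_2$, $J_4$, $J_5$ to the corresponding estimates in \cite{Cas1} (the same kernel splitting, exact cancellation in $J_4$, and vanishing bracket in $J_5$ that you describe), and treats $J_3$ and $J_6$ by noting that $\partial^jR=\varepsilon|\varepsilon|^\alpha\partial^jf$ absorbs the $\varepsilon$-denominators and that the far-field kernels are bounded below. Your write-up simply makes explicit the mechanics that the paper's proof leaves to the citation.
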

\begin{proof}
	
	Obviously, $||J_1(f)||_{H^{\alpha-1}}\leq C(||f||_{k+\alpha-1}+1)$. The proof of boundedness of terms $J_2$, $J_4$ and $J_5$ in $H^{\alpha-1}$ are exactly the same as  \cite{Cas1}, so we omit the details. Compared with $G_3$ in Section 4.2 \cite{Cas1}, there is an $\varepsilon|\varepsilon|^{\alpha}$ in the denominator of our $J_3$. However, notice that $R=1+\varepsilon|\varepsilon|^{\alpha}f$ implies $\partial^j R =\varepsilon|\varepsilon|^{\alpha}\partial^j f$ for all $j\geq1$. Thus, the $\varepsilon|\varepsilon|^{\alpha}$ in the denominator of $J_3$ can be eliminated and hence it will not cause any singularity. Therefore, the proof in \cite{Cas1} still work here. 
	
	For the remaining term $g_6$, arguing as above, we find that the $\varepsilon$ in the denominator of $g_6$ can be eliminated and hence it will not cause singularity. Moreover, for $\varepsilon$ small and $i\ge1$, $\left| \left(\boldsymbol{z}(x)-(d,0)\right)-Q_{\frac{2\pi i}{m}}\left(\boldsymbol{z}(y)-(d,0)\right)\right|$ has a positive lower bound, which means the integral is regular, and hence one can easily verify the  boundedness of  $g_6$ in $H^{\alpha-1}$.
\end{proof}

As a conclusion of above lemmas, we obtain the following regularity theorem.
\begin{corollary}\label{coro4-9}
	If $f\in H^{k+\alpha-1}$ solves $G^\alpha(\varepsilon, \Omega, f)=0$ for some $\varepsilon$ small and $\Omega$, then $f\in H^{k+2\alpha-1}$ for any $k\geq 3$.
\end{corollary}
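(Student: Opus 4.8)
The plan is to regard the differentiated equation, obtained by applying $\partial^{k-1}$ to $G^\alpha(\varepsilon,\Omega,f)=0$, as the operator identity
\[
L(\partial^{k-1}f)+S(\partial^{k-1}f)=J(f),
\]
with $L$, $S$, $J$ given in \eqref{4-5}, \eqref{4-6}, \eqref{4-7}. The mechanism behind the gain is that $L$ and $S$ carry the full top-order (order $\alpha$) part of the equation, while by Lemma \ref{lem4-8} the remainder $J(f)$ is one degree less singular, belonging to $H^{\alpha-1}$ as soon as $f\in H^{k+\alpha-1}$. Inverting the principal part $L+S$ and inserting the better regularity of $J(f)$ should therefore transfer extra smoothness onto $\partial^{k-1}f$, which is precisely the bootstrap gain we want.

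First I would show that $L+S$ is an isomorphism from $H^{2\alpha-1}$ onto $H^{\alpha-1}$ once $\varepsilon$ is small. The decisive structural point is that the factor $s_2(x)=\left(R'(x)^2+R(x)^2\right)^{-\alpha/2}-1$ appearing in \eqref{4-6} is independent of the integration variable, so it pulls out of the integral and $S=M_{s_2}\circ L$, where $M_{s_2}$ denotes multiplication by $s_2$; hence $L+S=(I+M_{s_2})\circ L$. By Lemma \ref{lem4-7} one has $\|s_2\|_{L^\infty}+\|s_2\|_{H^{3/2}}\le C|\varepsilon|^{1+\alpha}$, so for $\varepsilon$ small the zeroth-order factor $I+M_{s_2}$ is invertible on $H^{\alpha-1}$ via a Neumann series, the multiplier bound being legitimate because $\alpha-1<3/2$. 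Composing with the invertibility of $L$ supplied by Lemma \ref{lem4-6} then gives the isomorphism $L+S\colon H^{2\alpha-1}\to H^{\alpha-1}$.

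With this in hand the bootstrap is immediate. Given the solution $f\in H^{k+\alpha-1}$, put $u=\partial^{k-1}f\in H^{\alpha}$; Lemma \ref{lem4-8} yields $J(f)\in H^{\alpha-1}$. Since a priori $Lu\in L^2$, the equation reads $(I+M_{s_2})Lu=J(f)$, and applying $(I+M_{s_2})^{-1}$ gives $Lu=(I+M_{s_2})^{-1}J(f)\in H^{\alpha-1}$. As $L$ is an isomorphism onto $H^{\alpha-1}$ and injective (its Fourier symbol is nonzero, cf. Lemma \ref{lem2-9}), we recover $u=L^{-1}(Lu)\in H^{2\alpha-1}$, that is $\partial^{k-1}f\in H^{2\alpha-1}$ and hence $f\in H^{k+2\alpha-1}$, the claimed improvement; iterating over $k$ then produces $f\in C^\infty$.

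The main obstacle is the regularity mismatch in the last step: because $\alpha>1$, the a priori class $u\in H^{\alpha}$ is strictly weaker than the domain $H^{2\alpha-1}$ of the isomorphism $L+S$, so one cannot simply apply $(L+S)^{-1}$ to the identity. This forces the argument to run through the factorization $L+S=(I+M_{s_2})L$: invert only the zeroth-order factor $I+M_{s_2}$ at the level of $H^{\alpha-1}$ (where it is bounded and small), and then use the injectivity of $L$ to upgrade $u$ from $H^{\alpha}$ to $H^{2\alpha-1}$. Upstream, the genuinely technical content sits in Lemmas \ref{lem4-6}--\ref{lem4-8}, above all the verification that $J(f)$ is one order less singular than $L$ --- achieved by splitting each singular kernel into its leading $x$-dependent multiple of $\left(4\sin^2(y/2)\right)^{-\alpha/2}$ plus an $L^2$ remainder, exactly as in the $\alpha=1$ computation of Lemma \ref{lem4-3} --- together with the uniform smallness of $\|S(\partial^{k-1}f)\|_{H^{\alpha-1}}$; the remaining steps are routine Sobolev bookkeeping.
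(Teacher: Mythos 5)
Your proof is correct and rests on the same decomposition $L(\partial^{k-1}f)+S(\partial^{k-1}f)=J(f)$ and the same three ingredients (Lemmas \ref{lem4-6}, \ref{lem4-7} and \ref{lem4-8}) as the paper, which simply states the corollary as an immediate consequence of those lemmas. Where you go beyond the paper is in the final inversion step: you correctly point out that the bound of Lemma \ref{lem4-7} is an estimate on $H^{2\alpha-1}$, whereas a priori $\partial^{k-1}f$ only lies in $H^{\alpha}$ (strictly weaker when $\alpha>1$), so one cannot directly treat $L+S$ as a small perturbation of $L$ on its natural domain without circularity. Your observation that the bracket in \eqref{4-6} is independent of the integration variable, so that $S=M_{s_2}\circ L$ and hence $L+S=(I+M_{s_2})\circ L$, resolves this cleanly: $I+M_{s_2}$ is just multiplication by $1+s_2=(R'(x)^2+R(x)^2)^{-\alpha/2}$, which is bounded above and below and whose inverse is an $H^{3/2}$ multiplier on $H^{\alpha-1}$, so the identity $L(\partial^{k-1}f)=(1+s_2)^{-1}J(f)\in H^{\alpha-1}$ holds already at the $L^2$ level, and the Fourier characterization of $L$ (symbol of size $j^{\alpha}$ on the nonzero modes, which are the only ones present since $f$ lies in the $X$ spaces) upgrades $\partial^{k-1}f$ to $H^{2\alpha-1}$. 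This is a genuine repair of a point the paper glosses over; it buys an argument that never needs the unknown higher norm $\|\partial^{k-1}f\|_{H^{2\alpha-1}}$ to be finite before it is proved to be, at the modest cost of making the multiplication-operator structure of $S$ explicit.
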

In view of the above theorem, by bootstrap argument, we eventually obtain the regularity of solution for the case $1<\alpha<2$.

\begin{theorem}\label{thm4-10}
	When $1<\alpha<2$, the solution $f$ to $G^\alpha(\varepsilon, \Omega, f)=0$ lies in $H^k$ for any $k\geq 3$ and hence $f\in C^\infty$.
\end{theorem}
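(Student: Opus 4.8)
The plan is to read Theorem~\ref{thm4-10} as the terminal step of a bootstrap driven by Corollary~\ref{coro4-9}, so that essentially all of the analytic work has already been carried out in Lemmas~\ref{lem4-6}--\ref{lem4-8}. First I would record the starting regularity: the solution $f$ constructed in Section~2 (and, by the reduction explained in Section~3, the travelling solution as well) belongs to $X^{k_0+\alpha-1}\subset H^{k_0+\alpha-1}$ for the base integer $k_0\ge 3$ used in the implicit function theorem, and it solves $G^\alpha(\varepsilon,\Omega,f)=0$ for the fixed small $\varepsilon$ and the associated $\Omega=\Omega_\alpha(\varepsilon,f)$ from Lemma~\ref{lem2-10}. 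Since $f\in V^r$, its low-order norms (in particular $\|f\|_{H^3}\le 1$ and $\|f\|_{C^1}$) are controlled once and for all. This is exactly the hypothesis under which Corollary~\ref{coro4-9} applies at the level $k=k_0$.

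The engine of the argument is the splitting \eqref{4-1}, $L(\partial^{k-1}f)+S(\partial^{k-1}f)=J(f)$, with $L$, $S$, $J$ given by \eqref{4-5}--\eqref{4-7}. By Lemma~\ref{lem4-6} the principal operator $L$ is an isomorphism from $H^{2\alpha-1}$ onto $H^{\alpha-1}$, and its inverse depends neither on $\varepsilon$ nor on the regularity index; by Lemma~\ref{lem4-7} the correction $S$ has operator norm $C(\varepsilon)\to 0$ in the same topology, the constant depending only on $\varepsilon$ and on the fixed low-order norms of $f$. Hence, for $\varepsilon$ small, $L+S$ is invertible from $H^{2\alpha-1}$ to $H^{\alpha-1}$ by a Neumann series, with a bound uniform in $k$. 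Feeding in Lemma~\ref{lem4-8}, which places $J(f)\in H^{\alpha-1}$ whenever $f\in H^{k+\alpha-1}$, and inverting $L+S$ yields $\partial^{k-1}f\in H^{2\alpha-1}$, i.e.\ the regularity upgrade $f\in H^{k+2\alpha-1}$ of Corollary~\ref{coro4-9}. The point I would emphasize is that none of these constants degrades as $k$ grows, so the corollary may be invoked repeatedly without any loss.

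It then remains to iterate, and here I would argue by induction on the integer $k\ge 3$ that $f\in H^{k+2\alpha-1}$. The base case $k=k_0$ is Corollary~\ref{coro4-9} applied to the initial solution. For the inductive step, assume $f\in H^{k+2\alpha-1}$; since $\alpha>1$ we have $k+2\alpha-1\ge (k+1)+\alpha-1$, so by the monotonicity $H^{s}\subset H^{t}$ for $s\ge t$ the function $f$ also lies in $H^{(k+1)+\alpha-1}$, and Corollary~\ref{coro4-9} with exponent $k+1\ge 3$ promotes it to $H^{(k+1)+2\alpha-1}$. Thus $f\in H^{k+2\alpha-1}$ for every integer $k\ge 3$, whence $f\in H^{s}$ for all $s\ge 0$, and the one-dimensional Sobolev embedding gives $f\in C^\infty$.

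I expect the only genuinely delicate points to be the two bookkeeping facts that make the iteration advance rather than stall. The first is the uniformity just noted: because the invertibility of $L+S$ rests on the fixed constant $C(\varepsilon)$ and the $k$-independent inverse of $L$, the same small $\varepsilon$ works at every level, so the induction never requires shrinking $\varepsilon$ as $k\to\infty$. The second is the strict inequality $2\alpha-1\ge\alpha$, equivalently $\alpha\ge 1$, which is what lets the regularity produced by one application of the corollary clear the threshold needed to raise $k$ by one unit. This margin is precisely what degenerates at $\alpha=1$, where $L$ is no longer an isomorphism of the plain Sobolev scale and must be replaced by the logarithmic spaces $H^{k}_{\log}$ of Section~4.1; for $1<\alpha<2$ the margin is strictly positive and the bootstrap closes as above.
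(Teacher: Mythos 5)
Your proposal is correct and follows essentially the same route as the paper: the theorem is obtained by iterating Corollary~\ref{coro4-9} (itself resting on the splitting \eqref{4-1} and Lemmas~\ref{lem4-6}--\ref{lem4-8}), with the gain of $\alpha>1$ derivatives per step allowing the integer index $k$ to advance and the smallness of $\varepsilon$ in Lemma~\ref{lem4-7} depending only on low-order norms of $f$, hence uniform over the iteration. Your explicit remarks on the $k$-independence of the constants and on why the margin $2\alpha-1\ge\alpha$ closes the induction for $\alpha>1$ but degenerates at $\alpha=1$ merely spell out what the paper leaves implicit in its one-line bootstrap.
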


At the end of this section, we show the convexity of $D_0^\varepsilon$ by calculating the curvature.
\begin{theorem}\label{thm4-11}
	For $\varepsilon$ sufficiently small, $R(x)=1+\varepsilon|\varepsilon|^\alpha f(x)$ parameterizes convex patches.
\end{theorem}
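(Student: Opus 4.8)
The plan is to establish convexity by computing the signed curvature of the boundary curve $\partial D_0^\varepsilon$ and showing it is strictly positive once $\varepsilon$ is taken small. The boundary is parameterized by $\boldsymbol z(x)=(\varepsilon R(x)\cos x,\,\varepsilon R(x)\sin x)$, which is precisely the polar representation of a planar curve with polar angle $x$ and polar radius $r(x):=\varepsilon R(x)$. For a curve written in this form the signed curvature is
$$\kappa(x)=\frac{r(x)^2+2\,r'(x)^2-r(x)\,r''(x)}{\left(r(x)^2+r'(x)^2\right)^{3/2}},$$
so, since the denominator is always positive, it suffices to show that the numerator $N(x):=r^2+2(r')^2-r\,r''$ stays strictly positive for all $x\in[0,2\pi)$. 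A simple closed curve with everywhere positive curvature bounds a convex region, which is exactly the assertion of the theorem.

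Next I would substitute $r=\varepsilon R=\varepsilon\bigl(1+\varepsilon|\varepsilon|^\alpha f\bigr)$, so that $r'=\varepsilon^2|\varepsilon|^\alpha f'$ and $r''=\varepsilon^2|\varepsilon|^\alpha f''$, and expand. The leading contribution comes from $r^2$ and equals $\varepsilon^2$, while every other term carries at least one extra factor $\varepsilon|\varepsilon|^\alpha$; concretely,
$$N(x)=\varepsilon^2\Bigl(1+\varepsilon|\varepsilon|^\alpha\bigl(2f(x)-f''(x)\bigr)+O\!\left(\varepsilon^2|\varepsilon|^{2\alpha}\right)\Bigr),$$
where the $O$-term collects the quadratic pieces $\varepsilon^4|\varepsilon|^{2\alpha}(f^2+2(f')^2)$ and the cross term from $r\,r''$. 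Thus the only issue is to dominate the correction $\varepsilon|\varepsilon|^\alpha(2f-f'')$ by the constant $1$.

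This is where the regularity results are used. By Theorem \ref{thm4-5} (for $\alpha=1$) and Theorem \ref{thm4-10} (for $1<\alpha<2$) the solution $f$ lies in $C^\infty$, so in particular $f,f',f''\in L^\infty$. Moreover, the solution curve produced by the implicit function theorem in Section 2 passes through $(\varepsilon,\Omega,f)=(0,\Omega_\alpha^*,0)$, so $f=f_\varepsilon\to0$ in $X^{k}_{\log}$ (resp. $X^{k+\alpha-1}$) as $\varepsilon\to0$; combined with the embedding $H^k\hookrightarrow C^2$ valid for $k\ge3$, this yields a uniform bound $\|2f-f''\|_{L^\infty}\le C$ (indeed vanishing as $\varepsilon\to0$). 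Hence for $\varepsilon$ sufficiently small
$$N(x)\ge \varepsilon^2\bigl(1-C\,\varepsilon|\varepsilon|^\alpha-C\varepsilon^2|\varepsilon|^{2\alpha}\bigr)>0,$$
which forces $\kappa(x)>0$ everywhere and completes the proof.

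The computation itself is routine; the one point requiring genuine care is the uniform-in-$\varepsilon$ control of $\|f_\varepsilon\|_{C^2}$, so the argument cannot be run before the $C^\infty$ regularity of Theorems \ref{thm4-5} and \ref{thm4-10} has been established. I expect that dependency, rather than the curvature estimate, to be the main subtlety, since it is what guarantees that the $O(\varepsilon|\varepsilon|^\alpha)$ corrections are genuinely small relative to the leading term $\varepsilon^2$.
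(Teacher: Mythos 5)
Your proposal is correct and follows essentially the same route as the paper: both compute the signed curvature of the polar graph $r=\varepsilon R(x)$, observe that $\varepsilon\kappa(x)=\bigl(R^2+2(R')^2-RR''\bigr)/\bigl(R^2+(R')^2\bigr)^{3/2}=(1+O(\varepsilon))/(1+O(\varepsilon))>0$ for $\varepsilon$ small, and conclude convexity. Your explicit remark that the $O(\varepsilon)$ terms require a uniform $C^2$ bound on $f_\varepsilon$ (supplied by the function-space setting and the regularity theorems) is a point the paper leaves implicit, but it does not change the argument.
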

\begin{proof}
	We show this by computing the signed curvature. Given $x\in[0,2\pi)$, the signed curvature at $x$ is
	\begin{align*}
		\varepsilon \kappa(x)=\frac{R(x)^2+2R'(x)^2-R(x)R''(x)}{\left(R(x)^2+R'(x)^2\right)^{\frac{3}{2}}}=\frac{1+O(\varepsilon)}{1+O(\varepsilon)}>0,
	\end{align*}
	for $\varepsilon$ small, which implies the convexity.
\end{proof}

\appendix

\section{Auxiliary results}
We list some auxiliary results for the usage in the preceding sections. For $0<\alpha<2$ and $j\in \mathbb{N}^+$, let
\begin{equation*}
	\mathcal{S}_j(x)=\int_0^{2\pi}\frac{\sin(jx)-\sin(jx-jy)}{\left(\sin(\frac{y}{2})\right)^\alpha}dy,
\end{equation*} 
\begin{equation*}
	\mathcal{C}_j(x)=\int_0^{2\pi}\frac{\cos(jx)-\cos(jx-jy)}{\left(\sin(\frac{y}{2})\right)^\alpha}dy.
\end{equation*} 
Using the fundamental properties of Euler gamma function, we can calculate both $\mathcal{S}_j(x)$ and $\mathcal{C}_j(x)$ to be  trigonometric polynomials, and investigate the asymptotic behavior for their coefficients; see Lemma 2.6 in \cite{Cas1} and Formula 6.1.46 in \cite{Ab}. We conclude these results in the following lemma. 
\begin{lemma}\label{A-1}
    For $0<\alpha<2$ and $j\in \mathbb{N}^+$, $\mathcal{S}_j(x)$ and $\mathcal{C}_j(x)$ satisfy
    \begin{equation*}
		\mathcal{S}_j(x)=\beta_j\sin(jx), \ \ \ \ \  \mathcal{C}_j(x)=\beta_j\cos(jx),
    \end{equation*}
    where if $\alpha\neq 1$,
    \begin{equation*}
		\beta_j=2^\alpha\frac{2\pi\Gamma(1-\alpha)}{\Gamma(\frac{\alpha}{2})\Gamma(1-\frac{\alpha}{2})}\left(\frac{\Gamma(\frac{\alpha}{2})}{\Gamma(1-\frac{\alpha}{2})}-\frac{\Gamma(j+\frac{\alpha}{2})}{\Gamma(j+1-\frac{\alpha}{2})}\right),
    \end{equation*}
    and if $\alpha=1$,
    \begin{equation*}
    	\beta_j=\sum\limits^j_{i=1}\frac{8}{2i-1}.
    \end{equation*}
    Moreover, $\{\beta_j\}$ is increasing with respect to $j$, and has following asymptotic behavior when $j$ is large: if $\alpha=1$, $\beta_j=O(\ln j)$; if $\alpha>1$, $\beta_j=O(j^{\alpha-1})$.
\end{lemma}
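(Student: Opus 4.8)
The plan is to first decouple the $x$-dependence in both integrals and reduce them to a common scalar factor, then evaluate that factor explicitly by a Gamma-function identity, and finally read off the monotonicity and the asymptotics from the resulting closed form. First I would apply the sum-to-product identities
\[
\sin(jx)-\sin(jx-jy)=\cos(jx)\sin(jy)+\sin(jx)\bigl(1-\cos(jy)\bigr),
\]
\[
\cos(jx)-\cos(jx-jy)=-\sin(jx)\sin(jy)+\cos(jx)\bigl(1-\cos(jy)\bigr).
\]
Since the weight $(\sin(y/2))^{-\alpha}$ is invariant under the reflection $y\mapsto 2\pi-y$ while $\sin(jy)$ is odd under it, the terms carrying $\sin(jy)$ integrate to zero. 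Hence both integrals collapse onto the same scalar multiple,
\[
\mathcal{S}_j(x)=\beta_j\sin(jx),\qquad \mathcal{C}_j(x)=\beta_j\cos(jx),\qquad
\beta_j=\int_0^{2\pi}\frac{1-\cos(jy)}{(\sin(y/2))^{\alpha}}\,dy,
\]
which already gives the claimed trigonometric-polynomial form with a common coefficient.

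Next I would evaluate $\beta_j$. Substituting $y=2u$ and using $1-\cos(2ju)=2\sin^2(ju)$ yields $\beta_j=4\int_0^{\pi}\sin^2(ju)(\sin u)^{-\alpha}\,du$; expanding $\sin^2(ju)$ and appealing to the Gamma-function integral \eqref{2-16} (equivalently Formula 6.1.46 in \cite{Ab}) evaluates each resulting moment in closed form. Collecting terms and simplifying with the reflection and recurrence relations for $\Gamma$ produces, for $\alpha\neq1$, the stated expression
\[
\beta_j=2^{\alpha}\frac{2\pi\Gamma(1-\alpha)}{\Gamma(\tfrac{\alpha}{2})\Gamma(1-\tfrac{\alpha}{2})}\left(\frac{\Gamma(\tfrac{\alpha}{2})}{\Gamma(1-\tfrac{\alpha}{2})}-\frac{\Gamma(j+\tfrac{\alpha}{2})}{\Gamma(j+1-\tfrac{\alpha}{2})}\right).
\]
The case $\alpha=1$ is the removable singularity of this formula: the prefactor $\Gamma(1-\alpha)$ blows up while the bracket vanishes, so a careful limit $\alpha\to1$ (or a direct partial-fraction evaluation of the corresponding integral) turns the Gamma ratio into the telescoping sum $\beta_j=\sum_{i=1}^{j}\tfrac{8}{2i-1}$.

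Finally, monotonicity and the asymptotics follow from this closed form together with standard Gamma asymptotics. Writing $r_j:=\Gamma(j+\tfrac{\alpha}{2})/\Gamma(j+1-\tfrac{\alpha}{2})$, one has $r_j\sim j^{\alpha-1}$ by Formula 6.1.46 in \cite{Ab}, so $r_j$ is monotone in $j$; tracking the sign of the prefactor $\tfrac{2\pi\Gamma(1-\alpha)}{\Gamma(\alpha/2)\Gamma(1-\alpha/2)}$ (negative for $1<\alpha<2$, positive for $0<\alpha<1$) then forces $\beta_{j+1}>\beta_j$ in every case. Alternatively one can use $\beta_{j+1}-\beta_j=4\int_0^\pi\sin((2j+1)u)(\sin u)^{1-\alpha}\,du$, whose positivity is read off from the same evaluation. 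The growth $\beta_j=O(j^{\alpha-1})$ for $\alpha>1$ is then immediate, since the divergent term $r_j$ dominates the constant, and $\beta_j=O(\ln j)$ for $\alpha=1$ follows from the harmonic asymptotics of $\sum_{i=1}^{j}\tfrac{1}{2i-1}$.

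I expect the main obstacle to be the exact evaluation of $\beta_j$ in closed Gamma-function form: the decoupling, the monotonicity, and the asymptotics are essentially bookkeeping once that identity is in hand, but matching the precise constants and Gamma arguments—and in particular carrying out the $\alpha\to1$ limit cleanly to recover the harmonic sum—is where all the care is needed. This is precisely the computation already recorded in Lemma 2.6 of \cite{Cas1}, which I would invoke to avoid reproducing the lengthy Beta-integral manipulation, supplementing it only with the elementary reflection argument of the first step.
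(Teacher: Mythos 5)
Your proposal matches the paper's treatment of this lemma, which in fact offers no proof at all beyond the two citations: the sentence preceding the statement simply refers the reader to Lemma 2.6 of \cite{Cas1} for the closed form and to Formula 6.1.46 of \cite{Ab} for the asymptotics, exactly as you do in your final paragraph. Your supplementary sketch is essentially correct: the reflection $y\mapsto 2\pi-y$ does kill the $\sin(jy)$ contributions and yields the common coefficient $\beta_j=\int_0^{2\pi}(1-\cos(jy))\,|\sin(y/2)|^{-\alpha}\,dy$; the $\alpha\to1$ limit is indeed of the form $\infty\cdot 0$ and produces $\sum_{i=1}^{j}\frac{8}{2i-1}$ via $\psi(j+\tfrac12)-\psi(\tfrac12)$; and the monotonicity follows from the sign of $\Gamma(1-\alpha)$ played against the monotonicity of $r_j=\Gamma(j+\alpha/2)/\Gamma(j+1-\alpha/2)$, which is best seen from the exact ratio $r_{j+1}/r_j=(j+\alpha/2)/(j+1-\alpha/2)$ rather than from the asymptotic $r_j\sim j^{\alpha-1}$ alone.

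Two concrete cautions. First, the step ``expanding $\sin^2(ju)$ and evaluating each resulting moment'' fails literally on the range $\alpha\in[1,2)$ that this paper actually needs: $\int_0^\pi(\sin u)^{-\alpha}\,du$ and $\int_0^\pi\cos(2ju)(\sin u)^{-\alpha}\,du$ are each divergent there, and only their difference converges. You must either keep the combination $\int_0^\pi(1-\cos(2ju))(\sin u)^{-\alpha}\,du$ intact (for instance via your own identity $\beta_{j+1}-\beta_j=4\int_0^\pi\sin((2j+1)u)(\sin u)^{1-\alpha}\,du$, which telescopes from $\beta_1=4\int_0^\pi(\sin u)^{2-\alpha}\,du$ with every integrand now absolutely integrable and simultaneously delivers the positivity), or establish the Gamma formula for $\alpha<1$ and extend by analytic continuation in $\alpha$. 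Second, Formula 6.1.46 of \cite{Ab} is the Gamma-ratio asymptotic $\Gamma(n+a)/\Gamma(n+b)\sim n^{a-b}$, not an integral evaluation, so it is not ``equivalent'' to \eqref{2-16}; it is the ingredient for the $O(j^{\alpha-1})$ and, after the limit, the $O(\ln j)$ growth, which is precisely how the paper uses it. Neither point undermines your conclusion, since your fallback to Lemma 2.6 of \cite{Cas1} covers the evaluation, but the first one is a step that would fail as written.
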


\phantom{s}
\thispagestyle{empty}

\end{document}